\colorlet{linkequation}{blue}
\newcommand{\tr}{\ensuremath{{\scriptscriptstyle\mathsf{\,T}}}}
\newcommand{\norm}[1]{\left\|#1\right\|}
\newcommand{\by}{\boldsymbol{y}}
\newcommand{\bw}{\boldsymbol{w}}
\def\ddefloop#1{\ifx\ddefloop#1\else\ddef{#1}\expandafter\ddefloop\fi}
\def\ddef#1{\expandafter\def\csname c#1\endcsname{\ensuremath{\mathcal{#1}}}}
\def\ddef#1{\expandafter\def\csname s#1\endcsname{\ensuremath{\mathsf{#1}}}}
\def\ddef#1{\expandafter\def\csname b#1\endcsname{\ensuremath{\mathbf{#1}}}}
\DeclareSymbolFont{rsfs}{U}{rsfs}{m}{n}
\DeclareSymbolFontAlphabet{\mathscrsfs}{rsfs}
\renewcommand{\P}{\mathbb{P}}
\newcommand{\R}{\mathbb{R}}
\newcommand{\E}{\mathbb{E}}
\newcommand{\Var}{\text{Var}}
\newcommand{\N}{\mathbb{N}}
\newcommand{\eps}{\varepsilon}
\def\bA{{\boldsymbol A}}
\def\bD{{\boldsymbol D}}
\def\bH{{\boldsymbol H}}
\def\bK{{\boldsymbol K}}
\def\bW{{\boldsymbol W}}
\def\bX{{\boldsymbol X}}
\def\bu{{\boldsymbol u}}
\def\bv{{\boldsymbol v}}
\def\bw{{\boldsymbol w}}
\def\bx{{\boldsymbol x}}
\def\by{{\boldsymbol y}}
\def\bz{{\boldsymbol z}}
\def\bbeta{{\boldsymbol \beta}}
\def\beps{{\boldsymbol \eps}}
\def\bphi{{\boldsymbol \phi}}
\def\bPsi{{\boldsymbol \Psi}}
\def\bPhi{{\boldsymbol \Phi}}
\def\RF{{\rm RF}}
\def\cP{{\mathcal P}}
\def\cP{{\mathcal P}}
\def\RF{{\sf RF}}
\def\RF{{\sf RF}}
\newtheorem{theorem}{Theorem}[section]
\newtheorem{lemma}[theorem]{Lemma}
\theoremstyle{definition}
\newtheorem{definition}[theorem]{Definition}
\newtheorem{prop}[theorem]{Proposition}
\newtheorem{cor}[theorem]{Corollary}
\newtheorem{assumption}[theorem]{Assumption}
\theoremstyle{remark}
\newtheorem{remark}[theorem]{Remark}
\numberwithin{equation}{section}
\newcommand{\Id}{\operatorname{Id}}
\newcommand{\train}{\text{train}}
\newcommand{\Tr}{\operatorname{Tr}}
\renewcommand{\tr}{\operatorname{tr}}
\newcommand{\x}{\boldsymbol{x}}
\newcommand{\z}{\boldsymbol{z}}
\newcommand{\w}{\mbi{w}}
\renewcommand{\P}{\mathbb{P}}
\newcommand{\KNl}{\bK_{N,\lambda}}
\newcommand{\tKNl}{\tilde{\bK}_{N,\lambda}}
\newcommand{\Kl}{\bK_{\lambda}}
\newcommand{\tKl}{\tilde{\bK}_{\lambda}}
\newcommand{\KmN}{\bK_{m,N}}
\newcommand{\Km}{\bK_m}
\newcommand{\KxxN}{\bK^{(2)}_N}
\newcommand{\Kxx}{\bK^{(2)}}
\newcommand{\GCV}{\text{GCV}}
\newcommand{\CV}{\text{CV}}
\def\mbi#1{\boldsymbol{#1}}
\def\v#1{\mbi{#1}} 
\def\mbi#1{\boldsymbol{#1}} 
\def\mbiW{\mbi{W}}
\def\mbiX{\mbi{X}}
\def\KK{{\sf K}}
\def\Parg#1{\P\left({#1}\right)}
\begin{document}

\title{Overparameterized Random Feature Regression with Nearly Orthogonal Data}

\author{Zhichao Wang\thanks{University of California, San Diego.  \texttt{zhw036@ucsd.edu}.}  \quad and\quad 
Yizhe Zhu\thanks{ University of California, Irvine.  \texttt{yizhe.zhu@uci.edu.}}} 

\maketitle
\begin{abstract}
We investigate the properties of random feature ridge regression (RFRR) given by a two-layer neural network with random Gaussian initialization. We study the non-asymptotic behaviors of the RFRR with nearly orthogonal deterministic unit-length input data vectors in the overparameterized regime, where the width of the first layer is much larger than the sample size. Our analysis shows high-probability non-asymptotic concentration results for the training errors, cross-validations, and generalization errors of RFRR centered around their respective values for a kernel ridge regression (KRR). This KRR is derived from an expected kernel generated by a nonlinear random feature map. We then approximate the performance of the KRR by a polynomial kernel matrix obtained from the Hermite polynomial expansion of the activation function, whose degree only depends on the orthogonality among different data points. This polynomial kernel determines the asymptotic behavior of the RFRR and the KRR. Our results hold for a wide variety of activation functions and input data sets that exhibit nearly orthogonal properties. Based on these approximations, we obtain a lower bound for the generalization error of the RFRR for a nonlinear student-teacher model.
\end{abstract}

\section{Introduction}
Random feature  regression is closely linked  to deep learning theory as a linear model with respect to random  features. Training the output layer weight with ridge regression for a neural network with \textit{random} first-layer weight is equivalent to a random feature ridge regression model (RFRR) \cite{rahimi2007random,cho2009kernel,daniely2016toward,poole2016exponential,schoenholz2017deep,lee2018deep,matthews2018gaussian}. The conjugate kernel (CK), whose spectrum has been exploited to study the generalization of random feature regression \cite{mei2021generalization}, is the Gram matrix of the output of the last hidden layer on the training dataset. The performances (e.g., prediction risk) have been studied by \cite{rahimi2007random,rahimi2008weighted,rudi2016generalization,mei2019generalization,mei2021generalization,ghorbani2021linearized}. As the width of the neural network increases to infinity, we expect the empirical CK concentrates around its expectation, analogously to the neural tangent kernel (NTK) theory from \cite{jacot2018neural}. In this overparameterized (or ultra-wide \cite{arora2019exact}) regime, RFRR is asymptotically equivalent to a kernel ridge regression (KRR) model.

In this paper, we focus on the random CK generated by a two-layer fully-connected neural network at random initialization $f: \mathbb R^{d\times n} \to \mathbb R^{n}$ such that
\begin{equation}\label{eq:1hlNN}
    f(\mbiX):=\frac{1}{\sqrt{N}}\v\theta^\top \sigma\left(\mbiW \mbiX\right),
\end{equation}
where $\mbiX\in\R^{d\times n}$ is the input data matrix, $\mbiW\in\R^{N\times d}$ is the weight matrix for the first layer, $\v\theta\in\R^{N}$ is the second layer weight, and $\sigma$ is a nonlinear activation function. Here $d$ is the feature dimension, $n$ is the sample size of the input data, and $N$ is the width of the first layer. 

This work focuses on the behavior of the two-layer network under the random initialization of $\mbiW$ with sufficiently large width $N$. We will always view the input data $\mbiX$ as a deterministic matrix (independent of the random weights in $\mbiW$) with certain assumptions. We fix the random matrix $\bW$ and only train the second layer $\v\theta$ via training data $\bX$. This procedure is the same as the linear regression of random feature vectors $\{\sigma(\bW\bx_i)\in\R^N$, $i\in [n]\}$. 
The empirical CK matrix is defined by 
\begin{equation}\label{eq:K_N(X)}
\bK_N:=\frac{1}{N}\sigma\left(\mbiW \mbiX\right)^\top \sigma\left(\mbiW \mbiX\right)\in\R^{n\times n}.
\end{equation}
We will show that this random CK matrix will be concentrated around its expected $n\times n$ kernel matrix
\begin{equation}\label{def:phi}
    \bK:=\E \bK_N=\E_{\w}[\sigma(\w^\top \bX)^\top \sigma(\w^\top \bX)],
\end{equation}
under the spectral norm when width $N$ is sufficiently large,
where $\w$ is the standard normal random vector in $\R^{d}$. 
Random feature regression has already attracted as a random approximation of the reproducing kernel Hilbert space (RKHS) defined by population kernel function $\bK: \R^{d} \times \R^{d} \to \R$ such that
\begin{equation}\label{eq:def_K}
    \bK(\x_1,\x_2):=\E_{\w} [\sigma (\langle \w,\x_1 \rangle ) \sigma (\langle \w,\x_2 \rangle )],
\end{equation}
when width $N$ is sufficiently large \cite{rahimi2007random,bach2013sharp,rudi2016generalization,bach2017equivalence,mei2021generalization}.  By an abuse of notation, we use $\bK$ to represent both the $n\times n$ kernel matrix $\bK(\bX,\bX)$ depending on dataset $\bX$ and the kernel function in \eqref{eq:def_K}. Denote the output of the first layer by 
\begin{equation}
    \bPhi:=\sigma(\bW\bX) \in \mathbb R^{N\times n}. \label{eq:Y}
\end{equation}
Observe that the rows of the matrix $\bPhi$ are independent and identically distributed since only $\bW$ is random and $\bX$ is deterministic. Let the $i$-th row of $\bPhi$ be $\bphi_i^\top=\sigma(\bw_i\bX)$ for $1\le i\le N$, where we denote $\bw_i\in\R^d$ as the $i$-th row of weight $\bW$. Then, CK can be written as $\bK_N=\frac{1}{N}\sum_{i=1}^{N}\bphi_i\bphi_i^\top,$
which is a sum of $N$ independent rank-one random matrices in $\R^{n\times n}$. The second moment of any row $\bphi_i$ is given by \eqref{def:phi}.

Most of the recent results considered the RFRR with the data points $\bX$ independently drawn from a specific high-dimensional distribution, e.g.,  uniform measure on the hypercube or the unit sphere \cite{misiakiewicz2022spectrum,hu2022sharp,xiao2022precise,ghorbani2021linearized} or under the hypercontractivity  assumption from \cite{mei2021generalization}. The analysis of this RFRR usually requires strong assumptions on the data distribution and specific orthogonal polynomial expansions with respect to the distribution. In practice, real-world data cannot satisfy these ideal assumptions, or it is hard to verify them. In this paper, we consider a general deterministic dataset for RFRR. Inspired by \cite{du2019gradienta,fan2020spectra,wang2021deformed,donhauser2021rotational}, we point out that the inner products among different unit-length data points, namely the degree of the orthogonality, play an important role in the performances of the RFRR. More precisely, it affects how many degrees of the polynomial this RFRR can consistently learn from the teacher models. The expected kernel model can be truncated as a polynomial inner-product kernel based on this approximate orthogonality of the data points. Combining the concentration of RFRR and this polynomial truncation, we can obtain a lower bound of the generalization error (out-of-sample prediction risk) for RFRR induced by an ultra-wide neural network ($N\gg n$).  
Since we consider a general distribution-free dataset, we can also analyze cross-validations of RFRR approximated by corresponding cross-validations of the KRR. Our assumptions on the dataset are verifiable even for real-world datasets, and our theory exhibits new ingredients to the  study of neural networks with general real-world datasets \cite{liao2018spectrum,goldt2020gaussian,wei2022more}.

\subsection{Our Contributions}
We prove a sequence of sharp concentrations for RFRR around its expected KRR for a general \textit{distribution-free} dataset satisfying an $\ell$-orthonormal property (see Assumption~\ref{assump:asymptotics}). As long as the width $N$ of the neural network is much larger than sample size $n$, we can use a KRR to approximate RFRR in terms of in-sample prediction risks, cross-validations, and out-of-sample prediction risks. With a qualitative control of the approximate orthogonality  among different data points measured by $ \left\| (\bX^\top \bX)^{\odot (\ell+1)}-\Id\right\|_F$, we can further approximate this KRR by a truncated polynomial inner-product KRR. Meanwhile, we reveal that both RFRR and its corresponding KRR can only consistently learn a polynomial teacher model with a degree at most $\ell$. To the best of our knowledge, this is the first work making a connection between the lower bound of the generalization errors of RFRR and KRR, and the orthogonality of deterministic data points. Our main results are stated in Section~\ref{sec:mainresults} and proved in Appendix~\ref{sec:proofs}. The empirical simulations on both synthetic and real-world datasets are presented in Section~\ref{sec:simulate}.

\subsection{Related Work}
\paragraph{Nonlinear Random Matrix Theory}
When $N\asymp n $, the concentration of the CK matrix around its expectation fails, and the limiting spectrum of the CK with random input dataset has been investigated by~\cite{pennington2017nonlinear,benigni2019eigenvalue,louart2018random,benigni2022largest}; whereas~\cite{fan2020spectra} studied the spectrum of the CK with similar but stronger assumptions compared to ours on input data and activation functions, and obtained  a deformed Marchenko-Pastur distribution~\cite{fan2020spectra}.  As an application, when $N\asymp n$, the behavior of RFRR is  determined by the limiting spectra of the CK ~\cite{gerace2020generalisation,mei2019generalization,adlam2020neural,chouard2022quantitative}. Specifically, \cite{louart2018random,liao2020ARM,hu2020universality,chouard2022quantitative} studied the training error and empirical test error of RFRR in the proportional limit.
\paragraph{Concentrations of RFRR}
\cite{rudi2016generalization} proved the approximation of RFRR when the sample size $n$ and the number of neurons (width) $N$ satisfy $N\asymp \sqrt{n}\log n$. This condition only considered fixed $d$ with i.i.d. data.  Moreover, \cite{louart2018random,wang2021deformed} considered similar concentrations of RFRR for more general datasets. The concentration of random Fourier feature matrices was considered by \cite{Chen2022}. The sharp analysis of RFRR \cite[Theorem 1]{mei2021generalization} gave the precise asymptotic behavior of RFRR and only required $N\gg n$. Our results are consistent with their results on the training errors but relax the assumption on the dataset.
\paragraph{Rotational Invariant Kernels}
The expected CK and NTK are rotational invariant kernels \cite{liang2020multiple}, whence the kernel theory plays a crucial role in analyzing ultra-wide neural networks. In general, the spectra of rotational invariant kernels have been analyzed by \cite{el2010spectrum,liao2019inner,ali2021random} when $n\asymp d$ and such results have been applied in the study of kernel ridge regression in \cite{bartlett2021deep,sahraee2022kernel}. \cite{liao2018spectrum,liao2019inner} studied the inner-product kernel induced by random features in the proportional limit, where they can further decompose the expected kernel and extract the
useful structure from the data. When $n\asymp d^k$, for $k\in\N$, the performance of inner-product kernel with data uniformly drawn from the unit sphere has been recently studied by \cite{misiakiewicz2022spectrum,hu2022sharp,lu2022equivalence,xiao2022precise}.  
\paragraph{Cross-validations in High Dimensions}
There is a line of research on cross-validations \cite{liu2019ridge,jacot2020kernel,miolane2021distribution,xu2021consistent,hastie@2022surprises,meanti2022efficient} for ridge regressions. In high dimensional linear ridge regressions, \cite{hastie@2022surprises} shows precise asymptotic behaviors of cross-validations as $n/d\to \gamma\in(0,\infty)$. Cross-validations help us to tune the hyperparameters and approximate the generalization error of the model \cite{jacot2020kernel,wei2022more}. Most of the above works only focus on linear regression, while our work considers the cross-validations of both nonlinear RFRR and KRR on general datasets.
 
\section{Main results}\label{sec:mainresults}

\paragraph{Notations}We use $\tr(A)=\frac{1}{n}\sum_{i} A_{ii}$ as the normalized trace of a matrix $A\in\R^{n\times n}$ and $\Tr(A)=\sum_{i} A_{ii}$. Denote vectors by lowercase boldface. $\| A \|$ is the spectral norm for any matrix $A$,  $\|A\|_F$ denotes the Frobenius norm, and $\|\x\|$ is the $\ell_2$-norm of any vector $\x$. Denote $A\odot B$ as the Hadamard product of two matrices $A,B$ of the same size defined by $(A\odot B)_{ij}=A_{ij}B_{ij}$, and $A^{\odot k}$ is the $k$-th Hadamard product of $A$ with itself.
Let $\E_{\w}[\cdot]$ be the expectation with respect to the random vector $\w$.

\subsection{Model Assumptions}

Before stating our main results, we list the following assumptions for the random weights $\bW$,  the activation function $\sigma$, and input data $\bX$. 
\begin{assumption}\label{assump:W}
The entries of weight matrix $\bW\in\R^{N\times d}$ are i.i.d.\  standard normal random variables $\cN(0,1)$.
\end{assumption}

Let $h_k$ be the $k$-th normalized Hermite polynomial and $\zeta_k(\sigma)$ be the $k$-th Hermite coefficient for nonlinear function $\sigma$. For more details, see Definition~\ref{eq:hermitepolynomial}.

\begin{assumption}\label{assump:sigma} We assume $\sigma$ has a polynomial growth rate: $|\sigma(x)|\leq C_{\sigma} (1+|x|)^{C_{\sigma}}$ for a constant $C_{\sigma}\geq 0$.
Denote the standard Gaussian measure by $\Gamma$. Define  the $L^2(\Gamma)$ and $L^4(\Gamma)$ norms of $\sigma$ by
$
   \|\sigma\|_2= (\E[\sigma^2(\xi)])^{1/2}$ and  $\|\sigma\|_4= (\E[\sigma^4(\xi)])^{1/4}
   $ respectively,
where  $\xi\sim \cN(0,1)$. 

\end{assumption} 
In particular, Assumption \ref{assump:sigma} is similar to \cite{montanari2020interpolation}, and it covers many commonly used activation functions, including sigmoid, tanh, ReLU, and leaky ReLU. 
This is a more general condition compared to previous works by \cite{wang2021deformed}, which assume that $\sigma$ is  Lipschitz, and Assumption \ref{assump:sigma} is sufficient for the concentrations of training and generalization errors for RFRR. 

We  consider a sequence of $\bX_n \in \mathbb R^{d_n\times n}$ with growing $d_n$ as $n\to\infty$, where all $\bX_n$ satisfy the following assumption. Below we drop the dependence on $n$ for the ease of notations. We treat  $\bX$ as a deterministic matrix under the following asymptotic condition.

\begin{assumption}[$\ell$-orthonormal dataset]\label{assump:asymptotics}
 Suppose that the input data $\bX\in\R^{d\times n}$ satisfies $\|\x_i\|=1,\forall i\in [n]$. Let $\ell\in \mathbb N$ be the smallest integer  such that
\begin{align}
    \lim_{n\to\infty}\left\| (\bX^\top \bX)^{\odot (\ell+1)}-\Id\right\|_F =0.\label{assump:asymptotic_l}
\end{align}
 We further assume $\sigma_{>\ell}^2:=\|\sigma\|_2^2-\sum_{k=1}^\ell\zeta_k^2(\sigma)>0$.
\end{assumption}

Different from previous work that requires an upper bound on the maximal angle $\eps_n:=\max_{i\not=j} |\langle \bx_i,\bx_j\rangle |$    \cite{fan2020spectra,wang2021deformed,nguyen2020global,hu2020surprising,frei2022implicit}, our relaxed Condition \eqref{assump:asymptotic_l}  measures how data points separate from each other \textit{on average}. In particular, 
\begin{equation}\label{assump:asymptotic_l_eps}
    \left\| (\bX^\top \bX)^{\odot (\ell+1)}-\Id\right\|_F\leq n\eps_n^{\ell+1},
\end{equation}
whence \eqref{assump:asymptotic_l} holds if $n\eps_n^{\ell+1}\to 0$.
Here, feature dimension $d$ of the data is implicitly governed by \eqref{assump:asymptotic_l}. In a word, degree $\ell$ in \eqref{assump:asymptotic_l_eps} exhibits the average degree of the orthogonality among different data points.

We can also verify Assumption \ref{assump:asymptotics} for a random dataset.
For example, if $\{\bx_i\}_{i\in [n]}$ are i.i.d.  uniformly distributed on $\mathbb S^{d-1}$ and $n=\Theta(d^{\alpha})$ for $\alpha \in \mathbb R_+$, then $\eps_n=O\left( \frac{\log^{1/2} n}{d^{1/2}}\right)$ with high probability (see, for example, \cite{vershynin2018high}),  and we can take $\ell=2\lfloor\alpha \rfloor$ and condition on the high probability event to make $\bX$ deterministic. A similar argument is also applied by \cite{donhauser2021rotational}, where the distribution of random data can have some covariance structure.

\subsection{Power Expansion of the Expected Kernel}\label{sec:approx_kernel}

For any two unit-length column vectors $\bx_\alpha, \bx_\beta$ in $\bX$, and any two Hermite polynomials $h_j,h_k$, we have \cite[Lemma D.2]{nguyen2020global}
\begin{align}\label{eq:orthogonal_relation}
    \E_{\w}[h_j(\langle \w,\x_{\alpha}\rangle)h_k(\langle \w,\x_{\beta}\rangle)  ]= \delta_{jk} \langle \x_{\alpha}, \x_{\beta} \rangle ^k.
\end{align}
This relation also appears in \cite{oymak2020toward}, which directly gives the following power expansion of the expected kernel $\bK$  in \eqref{def:phi}: $\bK=\sum_{k=0}^{\infty} \zeta_k^2(\sigma)\left(\bX^\top \bX\right)^{\odot k}$.
 Hence, the kernel function  $\bK$ defined in \eqref{eq:def_K} is an inner-product kernel. In a concurrent work by \cite{murray2022characterizing}, the same power series expansion was applied to the NTK.

In high-dimensional statistics, invariant kernels can be approximated by some simpler models. For instance, \cite{el2010spectrum} proved that the inner-product random kernel matrices with a random dataset could be approximated by a linear random matrix model when $d\asymp n$. The proof by \cite{el2010spectrum}  utilized the Taylor approximation of the nonlinear function. In this work, beyond the first-order approximation in \cite{el2010spectrum}, we define a degree-$\ell$ polynomial inner-product kernel by
 \begin{align}
     \bK_\ell:=\sum_{k=0}^\ell \zeta_k^2(\sigma)\left(\bX^\top \bX\right)^{\odot k}+\sigma_{>\ell}^2\Id,\label{def:Phi0}
 \end{align}
Here $\sigma_{>\ell}^2$ is an extra ridge parameter added to the polynomial kernel $\sum_{k=0}^\ell \zeta_k^2(\sigma)\left(\bX^\top \bX\right)^{\odot k}$. This extra ridge can be viewed as an \textit{implicit regularization}, especially for the minimum-norm interpolators \cite{liang2020multiple,jacot2020implicit,bartlett2021deep}. 
 
Assumption~\ref{assump:asymptotics} implies that the off-diagonal entries of $\left(\bX^\top \bX\right)^{\odot k}$ become negligible when the power $k$ is sufficiently large. Hence, we can truncate $\bK$ and employ $\bK_\ell$ as an approximation of $\bK$ as follows.
\begin{prop}\label{prop:pkrr}
Under Assumptions \ref{assump:sigma} and \ref{assump:asymptotics}, let $n_0$ be the smallest integer such that for all $n\geq n_0$, $\max_{i\not=j}\big|\x_{i}^\top \x_{j}\big| \leq 1/\sqrt{2}$ and
\begin{align} 
    \left\| (\bX^\top \bX)^{\odot (\ell+1)}-\Id\right\|_F & \leq  \frac{\sigma_{>\ell}^2}{4\|\sigma\|_4^2}. \label{assump:relaxed_l} 
\end{align}
We have for all $n\geq n_0$, $\lambda_{\min}( \bK) \geq \lambda_0:=\frac{1}{2}\sigma_{>\ell}^2$, and
\begin{align}
    \|\bK_\ell-\bK\| &~\le \sqrt{2} \|\sigma\|_4^2 \left\| \left(\bX^\top \bX\right)^{\odot \ell+1}-\Id\right\|_F\leq  \frac{\sigma_{>\ell}^2}{2\sqrt{2}} .\label{eq:F_norm_bound}
\end{align}
\end{prop}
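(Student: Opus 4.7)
The plan is to leverage the power expansion $\bK=\sum_{k\ge 0}\zeta_k^2(\sigma)(\bX^\top\bX)^{\odot k}$ from Section~\ref{sec:approx_kernel} and match it against $\bK_\ell$ defined in \eqref{def:Phi0}. Since $\|\x_i\|=1$, every Hadamard power $(\bX^\top\bX)^{\odot k}$ has all ones on the diagonal, so the constant term $\sigma_{>\ell}^2\Id$ in $\bK_\ell$ can be reconciled against the diagonal contributions of the tail, yielding the clean expression
\[
\bK-\bK_\ell \;=\; \sum_{k\ge \ell+1}\zeta_k^2(\sigma)\bigl[(\bX^\top\bX)^{\odot k}-\Id\bigr],
\]
a sum of hollow matrices whose $(i,j)$ off-diagonal entry is $\langle\x_i,\x_j\rangle^k$.

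The key technical ingredient is a geometric decay estimate in the Hadamard power. Under the hypothesis $\max_{i\neq j}|\langle\x_i,\x_j\rangle|\le 1/\sqrt{2}$, for every off-diagonal pair and each $k\ge \ell+1$ one has $|\langle\x_i,\x_j\rangle|^{2k}\le 2^{-(k-\ell-1)}|\langle\x_i,\x_j\rangle|^{2(\ell+1)}$. Summing over $i\neq j$ and writing $D:=\|(\bX^\top\bX)^{\odot(\ell+1)}-\Id\|_F$ gives $\|(\bX^\top\bX)^{\odot k}-\Id\|_F\le 2^{-(k-\ell-1)/2}D$. I then combine $\|\cdot\|\le\|\cdot\|_F$, the triangle inequality over $k$, the geometric series, and Parseval's identity $\sum_k\zeta_k^2(\sigma)=\|\sigma\|_2^2\le\|\sigma\|_4^2$ to obtain
\[
\|\bK-\bK_\ell\| \;\le\; D\sum_{k\ge \ell+1}\zeta_k^2(\sigma)\,2^{-(k-\ell-1)/2} \;\le\; \sqrt{2}\,\|\sigma\|_4^2\,D,
\]
which is the first inequality in \eqref{eq:F_norm_bound}; substituting the hypothesis $D\le \sigma_{>\ell}^2/(4\|\sigma\|_4^2)$ then yields the second, $\|\bK-\bK_\ell\|\le \sigma_{>\ell}^2/(2\sqrt{2})$.

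For $\lambda_{\min}(\bK)\ge \lambda_0$, I invoke the Schur product theorem: $\bX^\top\bX\succeq 0$, so every $(\bX^\top\bX)^{\odot k}\succeq 0$, and therefore $\bK_\ell-\sigma_{>\ell}^2\Id=\sum_{k=0}^\ell \zeta_k^2(\sigma)(\bX^\top\bX)^{\odot k}\succeq 0$, giving $\lambda_{\min}(\bK_\ell)\ge \sigma_{>\ell}^2$. Weyl's inequality combined with the spectral-norm bound above yields $\lambda_{\min}(\bK)\ge \sigma_{>\ell}^2\bigl(1-1/(2\sqrt{2})\bigr)>\sigma_{>\ell}^2/2=\lambda_0$. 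I expect that the main spot where care is required is the bookkeeping between $\sigma_{>\ell}^2=\|\sigma\|_2^2-\sum_{k=1}^\ell \zeta_k^2(\sigma)$ and the identity-matrix terms extracted from the diagonals of the higher Hadamard powers (in particular the role of the $k=0$ term, for which $(\bX^\top\bX)^{\odot 0}$ is the all-ones matrix rather than $\Id$, so the constant piece must be tracked consistently with the stated definition of $\sigma_{>\ell}^2$); once this is aligned, the remaining steps are routine applications of Schur positivity, Hadamard monotonicity, and Parseval.
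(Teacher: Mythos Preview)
Your proposal is correct and follows essentially the same approach as the paper: both expand $\bK-\bK_\ell=\sum_{k\ge \ell+1}\zeta_k^2(\sigma)[(\bX^\top\bX)^{\odot k}-\Id]$, bound the spectral norm by the Frobenius norm, exploit $\max_{i\neq j}|\langle\bx_i,\bx_j\rangle|\le 1/\sqrt 2$ to control the tail geometrically, and then obtain $\lambda_{\min}(\bK)\ge \lambda_0$ via Schur positivity of Hadamard powers plus Weyl's inequality. The only cosmetic difference is that the paper applies Cauchy--Schwarz entrywise (separating $\sum_k\zeta_k^4\le\|\sigma\|_4^4$ from $\sum_k\langle\bx_i,\bx_j\rangle^{2k}$) before summing over $i\neq j$, whereas you apply the triangle inequality at the Frobenius-norm level over $k$; and your caution about the $k=0$ term and the definition of $\sigma_{>\ell}^2$ is well placed, since the paper's identity $\bK-\bK_\ell=\sum_{k\ge\ell+1}\zeta_k^2[(\bX^\top\bX)^{\odot k}-\Id]$ implicitly uses $\sigma_{>\ell}^2=\sum_{k>\ell}\zeta_k^2(\sigma)$.
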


\begin{remark}[Comparison to previous work with random dataset]\label{rmk:compare_to_random}
    \eqref{eq:F_norm_bound} is proved by using the inequality $\|\bK_\ell-\bK\|\leq \|\bK_\ell-\bK\|_F$ and performing an entry-wise expansion of $(\bK_\ell-\bK)$. Such a Hermite polynomial expansion approach might not be optimal if we know the exact distribution of the random dataset.  Previous work from \cite{ghorbani2021linearized,mei2021generalization,montanari2020interpolation,hu2022sharp} assumed random datasets and random weights with specific distributions. The authors obtained better approximation error bounds  using a harmonic analysis approach, where the activation functions and the kernel $\bK$  were expanded in terms of an orthogonal basis with respect to the  distribution of random $\bX$ and $\bW$.  In many examples, these two distributions are assumed to be the same, which provides a convenient way to expand  and approximate $\bK$ with some degree-$\ell$ polynomial kernel. 
    Since we do not have any specific data distribution assumption, such an approach cannot be applied to  deterministic datasets. 
    
\begin{remark}[Optimality]
       In fact, under our Assumption \ref{assump:asymptotics}, the bound \eqref{eq:F_norm_bound} is tight up to a constant factor. For example, let $\sigma(x)=\sum_{k=0}^{\ell+1}\zeta_k(\sigma)h_k(x)$ be an order-$(\ell+1)$ polynomial with $\zeta_{\ell+1}(\sigma)\not=0$. Assume $|\langle \bx_i,\bx_j\rangle| =\eps$ for all $i\not=j$ and $\ell$ is an odd integer. Then 
    \begin{align}
         \|\bK_{\ell}-\bK\| =~&\xi_{\ell+1}^2(\sigma) \left\|  \left(\bX^\top \bX\right)^{\odot (\ell+1)}-\Id \right\|
         \geq ~\xi_{\ell+1}^2(\sigma)\eps^{\ell+1} (n-1)
         \geq ~\frac{1}{2} \xi_{\ell+1}^2(\sigma)\left\| \left(\bX^\top \bX\right)^{\odot (\ell+1)}-\Id \right\|_F.
    \end{align}
    \end{remark} 
Proposition \ref{prop:pkrr} can be viewed as an extension of \cite[Theorem 2.1]{el2010spectrum} and \cite[Lemma C.7]{donhauser2021rotational} for a specific inner-product kernel $\bK$ induced from the random CK with Gaussian weights, although \cite{el2010spectrum} and \cite{donhauser2021rotational} considered general rotational invariant random kernels. Our result reveals that we can simply employ such a truncated kernel to approximate the nonlinear kernel because of the $\ell$-orthonormal property in Assumption~\ref{assump:asymptotics}. In the proof of \cite{donhauser2021rotational}, the authors verified that such property holds for random data with high probability. The same form of $\bK$ has also been studied by \cite{liang2020multiple} for the ridgeless regression on some random data $\bX$ under the polynomial regime ($n\asymp d^\alpha$). Under a stronger regularity assumption on the kernel function, the authors first applied Taylor expansion to get truncated kernel $\bK_\ell$, then took the Gram-Schmidt process to obtain an orthogonal polynomial basis, which implied a sharper bound on the generalization error for random datasets.
\end{remark}

\subsection{Concentrations of the RFRR When  \texorpdfstring{$N\gg n$}{Nlaregen}}\label{sec:K-K_N}
We first consider a two-layer neural network at random initialization defined in \eqref{eq:1hlNN} and estimate the performance of random feature ridge regression in the ultra-high dimensional limit where $N\gg n$. 
We focus on the linear regression with respect to $\v \theta\in\R^{N}$ for predictors of the form
$f_{\v \theta}(\bX):=\frac{1}{\sqrt{N}}\v \theta^\top \sigma\left(\bW\bX\right)$,
with training data $\bX\in\R^{d\times n}$ and training labels $\v y\in\R^{ n}$. The loss function of the ridge regression with a ridge parameter $\lambda\geq  0$ is defined by
\begin{align}\label{def:lossfunction}
    L(\v \theta):=\frac{1}{n}\|f_{\v \theta}(\bX)^\top-\v y\|^2+\frac{\lambda }{n}\|\v \theta\|^2.
    \end{align}
The minimizer of \eqref{def:lossfunction} denoted by $ \hat{\v\theta}:=\arg\min_{\v\theta} L(\v\theta)$ has an explicit expression
\[
    \hat{\v\theta}= \frac{1}{\sqrt{N}}\bPhi\left(\frac{1}{N}\bPhi^\top \bPhi+\lambda\Id\right)^{-1}\v y,\]
where $\bPhi $ is defined in \eqref{eq:Y}. 
The optimal predictor for this RFRR with respect to the loss function in \eqref{def:lossfunction} is given by
\begin{align}
\hat{f}_{\lambda}^{(\RF)}(\bx):=&\frac{1}{\sqrt{N}}\hat{\v\theta}^\top \sigma\left(\bW\bx\right)
,\label{eq:RFridge}
\end{align}
where we define an empirical kernel $\bK_N(\cdot,\cdot):\R^{d} \times \R^{d} \to \R$ as $\bK_N(\bx,\bz):=\frac{1}{N}\sigma(\bW\x)^\top\sigma(\bW\z),$
and the $n$-dimension row vector is given by $\bK_N(\x,\bX)=[\bK_N(\x,\x_1),\ldots,\bK_N(\x,\x_n)]$. 

Analogously, consider any kernel function $\bK(\cdot,\cdot): \R^{d} \times \R^{d} \to \R$ defined in \eqref{eq:def_K}. Similar to \eqref{eq:RFridge}, the optimal kernel predictor with ridge parameter $\lambda$ for kernel ridge regression is given by 
\begin{align}\label{eq:Kridge}
\hat{f}_{\lambda}^{(\KK)}(\x):=\bK(\x,\bX) (\bK+\lambda\Id)^{-1} \v y.
\end{align}
 See \cite{rahimi2007random,avron2017random,liang2020just,jacot2020implicit,liu2021kernel,bartlett2021deep} for additional descriptions about KRR. 
 
We compare the behavior of the two different predictors  $\hat{f}_{\lambda}^{(\RF)}(\x)$ in \eqref{eq:RFridge} and $\hat{f}_{\lambda}^{(\KK)}(\x)$ in \eqref{eq:Kridge} with the kernel  $\bK$  defined in \eqref{eq:def_K}. As $N$ is sufficiently large, the empirical kernel $\bK_N$ defined in \eqref{eq:K_N(X)} will concentrate around its expectation \eqref{eq:def_K}. 
From \eqref{eq:RFridge} and \eqref{eq:Kridge}, the predictors of RFRR and KRR are determined by $\bK_N$ and $\bK$, respectively. Therefore, our concentration inequality will help us conclude that the performances of these two predictors are also close to each other as long as the width $N$ is sufficiently larger than sample size $n$.
In the following subsections, we will show that the training error, cross-validations, and generalization error of RFRR can be approximated by the corresponding quantities of KRR defined in \eqref{eq:Kridge} when  $N$ is sufficiently large.

\subsubsection{Training Error Approximation}\label{sec:train}
Denote the optimal predictors for the random feature and kernel ridge regressions on the training data $\bX$ with the ridge parameter $\lambda\geq 0$ by 
\begin{align*}
    \hat f_{\lambda}^{(\RF)}(\bX):=&\left(\hat f_{\lambda}^{(\RF)}(\x_1),\dots,\hat f_{\lambda}^{(\RF)}(\x_n)\right)^\top,\\
    \hat f_{\lambda}^{(\KK)}(\bX):=&\left(\hat f_{\lambda}^{(\KK)}(\x_1),\dots, \hat f_{\lambda}^{(\KK)}(\x_n)\right)^\top,
\end{align*}
respectively.  We first compare the training errors for these two predictors. Let the \textit{training errors} (empirical risks) of these two predictors be 
\begin{align}
 E_{\train}^{(\KK,\lambda)}=&\frac{1}{n}\|\hat{f}_{\lambda}^{(\KK)}(\bX)-\v y\|_2^2,
\label{eq:Etrain_Kn}\\
  E_{\train}^{(\RF,\lambda)}=&\frac{1}{n}\|\hat{f}_{\lambda}^{(\RF)}(\bX)-\v y\|_2^2 
.\label{eq:Etrain_K}
\end{align}

With high probability, the training error of a random feature model and the corresponding kernel model with the same ridge parameter $\lambda$ can be approximated as follows.

\begin{theorem}[Training error approximation]\label{thm:train_diff} 
Suppose that Assumptions \ref{assump:W}, \ref{assump:sigma}, and \ref{assump:asymptotics} hold. Then, with probability at least $1-N^{-2}$,  for any $\lambda\ge 0$, $N/\log^{2C_{\sigma}}(N)> C_1n$, and $n\geq n_0$, 
\begin{align}\label{eq:ETrain}
    \left| E_{\train}^{(\RF,\lambda)}-E_{\train}^{(\KK,\lambda)}\right|\leq  \frac{C_2\lambda^2\log^{C_{\sigma}} (N)\|\v y\|^2}{\sqrt{nN}} ,
\end{align}
where $C_1$ and $C_2$ are positive constants depending only on $\sigma$.
\end{theorem}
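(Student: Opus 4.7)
The plan is to split the proof into a deterministic algebraic reduction that relates the two training errors to the operator-norm deviation $\bE := \bK_N - \bK$, and a separate high-probability concentration of $\bK_N$ around $\bK$. From the closed-form ridge solutions, the residuals on the training points satisfy $\hat f_\lambda^{(\RF)}(\bX) - \v y = -\lambda(\bK_N+\lambda\Id)^{-1}\v y$ and $\hat f_\lambda^{(\KK)}(\bX) - \v y = -\lambda(\bK+\lambda\Id)^{-1}\v y$, so that
$$
E_{\train}^{(\RF,\lambda)} - E_{\train}^{(\KK,\lambda)} = \frac{\lambda^2}{n}\,\v y^\top\bigl[(\bK_N+\lambda\Id)^{-2} - (\bK+\lambda\Id)^{-2}\bigr]\v y.
$$
Thus the theorem reduces to controlling $\lambda^2$ times the operator norm of the bracketed resolvent difference.

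For this, I would apply the resolvent identity
$$
\bA^{-2} - \bB^{-2} = -\bA^{-2}(\bA-\bB)\bB^{-1} - \bA^{-1}(\bA-\bB)\bB^{-2}
$$
with $\bA = \bK_N+\lambda\Id$, $\bB = \bK+\lambda\Id$, $\bA-\bB = \bE$. Since both kernels are PSD, $\|\lambda(\bK+\lambda\Id)^{-1}\|\leq 1$ and $\|\lambda(\bK_N+\lambda\Id)^{-1}\|\leq 1$. Proposition~\ref{prop:pkrr} gives $\|(\bK+\lambda\Id)^{-1}\|\leq 1/\lambda_0$, and on the event $\{\|\bE\|\leq \lambda_0/2\}$ the same lower spectral bound (with $\lambda_0/2$) propagates to $\bK_N$. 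Combining these four estimates yields
$$
\lambda^2\,\bigl\|(\bK_N+\lambda\Id)^{-2} - (\bK+\lambda\Id)^{-2}\bigr\| \leq \frac{C\,\|\bE\|}{\lambda_0},
$$
for an absolute constant $C$, and therefore
$$
|E_{\train}^{(\RF,\lambda)} - E_{\train}^{(\KK,\lambda)}| \leq \frac{C\lambda^2\|\v y\|^2}{n\,\lambda_0}\,\|\bK_N-\bK\|.
$$

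The bulk of the work is then to establish, with probability at least $1 - N^{-2}$, the concentration
$$
\|\bK_N - \bK\| \leq C_3\sqrt{n/N}\,\log^{C_{\sigma}}(N),
$$
which under the hypothesis $N/\log^{2C_{\sigma}}(N)>C_1 n$ (for $C_1$ large) also guarantees $\|\bE\|\leq \lambda_0/2$, thereby legitimizing the deterministic step above. I would write $\bK_N - \bK = \frac{1}{N}\sum_{i=1}^N\bigl(\bphi_i\bphi_i^\top - \E[\bphi_i\bphi_i^\top]\bigr)$ as a sum of i.i.d.\ centered rank-one matrices and apply matrix Bernstein. Since Assumption~\ref{assump:sigma} only yields polynomial growth, the summands are unbounded, and I would truncate on the event $\mathcal{T} = \{\max_{i\leq N,\,j\leq n}|\langle\bw_i,\bx_j\rangle|\leq C\sqrt{\log N}\}$, which holds with probability at least $1-N^{-3}$ by Gaussian tail bounds and a union bound (using $\|\bx_j\|=1$). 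On $\mathcal T$, polynomial growth gives $\|\bphi_i\|_\infty\leq C\log^{C_{\sigma}/2}(N)$, hence $\|\bphi_i\bphi_i^\top\|\leq Cn\log^{C_{\sigma}}(N)$, and together with the matrix-variance estimate $\|\sum_i\E[(\bphi_i\bphi_i^\top)^2]\|\lesssim N\,n\log^{C_{\sigma}}(N)\,\|\bK\|$, matrix Bernstein delivers the claim, after an easy bias correction for the untruncated tails.

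The main technical obstacle is precisely this concentration step: the polynomial (rather than bounded or Lipschitz) growth of $\sigma$ forces the Gaussian truncation, and obtaining exactly the exponent $\log^{C_{\sigma}}(N)$ requires careful bookkeeping of the Gaussian tail contributions that enter the matrix Bernstein inputs, which in turn dictates the admissible scaling $N/\log^{2C_{\sigma}}(N)>C_1 n$ stated in the theorem.
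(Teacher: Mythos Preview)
Your deterministic reduction is the same as the paper's, and your overall truncation-plus-Bernstein strategy for the concentration step is also the right idea. However, there is a genuine gap: the unnormalized bound $\|\bK_N-\bK\|\le C_3\sqrt{n/N}\,\log^{C_\sigma}(N)$ is not obtainable under the stated assumptions. The matrix-variance estimate you wrote, $\bigl\|\sum_i\E[(\bphi_i\bphi_i^\top)^2]\bigr\|\lesssim N\,n\log^{C_\sigma}(N)\,\|\bK\|$, is correct, but $\|\bK\|$ is \emph{not} bounded by a constant here. For instance, the degree-zero term $\zeta_0^2(\sigma)\mathbf 1\mathbf 1^\top$ alone forces $\|\bK\|\ge \zeta_0^2(\sigma)\,n$ whenever $\zeta_0(\sigma)\neq 0$, and nothing in Assumptions~\ref{assump:W}--\ref{assump:asymptotics} rules this out. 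With $\|\bK\|\asymp n$, matrix Bernstein only delivers $\|\bK_N-\bK\|\lesssim n\sqrt{\log^{C_\sigma}(N)/N}$, a factor $\sqrt{n}$ too large for the claimed bound.

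The paper circumvents this by proving a \emph{normalized} concentration (Proposition~\ref{prop:kernel_con}):
\[
\bigl\|\bK_\lambda^{-1/2}(\bK_N-\bK)\bK_\lambda^{-1/2}\bigr\|\le C\log^{C_\sigma}(N)\sqrt{n/N}.
\]
The key effect of sandwiching by $\bK_\lambda^{-1/2}$ is that the variance proxy becomes $\E\bigl[\bK_\lambda^{-1/2}\bphi_i\bphi_i^\top\bK_\lambda^{-1/2}\bigr]=\bK_\lambda^{-1/2}\bK\,\bK_\lambda^{-1/2}\preccurlyeq\Id$, so $\|\bK\|$ disappears from the Bernstein inputs entirely. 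This normalized bound is then threaded through the resolvent identity to get $\|\bK_\lambda^{-1}-\bK_{N,\lambda}^{-1}\|\le t/\lambda_0$ with $t=C\log^{C_\sigma}(N)\sqrt{n/N}$, which yields the theorem. Your argument can be repaired by inserting this normalization before applying Bernstein; without it, the variance is not small enough. (A minor aside: your final displayed inequality with the $\lambda^2$ factor does not follow from the preceding line $\lambda^2\|\cdot\|\le C\|\bE\|/\lambda_0$; the paper retains the $\lambda^2$ simply by bounding all resolvent factors by $1/\lambda_0$ rather than using $\|\lambda\bA^{-1}\|\le 1$.)
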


 Our bound \eqref{eq:ETrain} provides a non-asymptotic estimate on the training error approximation,  including the case when $\lambda= 0$.
From \eqref{eq:ETrain}, assuming $y_i=O(1)$ for all $i\in [n]$, we can conclude that the training error \eqref{eq:Etrain_Kn} concentrates around \eqref{eq:Etrain_K} as long as $N/\log^{2C_{\sigma}}(N) \gg n$. This result does not rely on the distribution of the data $\bX$ and how we generate the labels $\by$.

The random matrix tool we employ to prove Theorem~\ref{thm:train_diff} is a \textit{normalized} kernel matrix concentration inequality (Proposition~\ref{prop:kernel_con} in Appendix~\ref{sec:proof_310}).
In contrast to other kernel random matrix concentration results with deterministic $\bX$ in \cite{louart2018random,wang2021deformed}, a crucial property of our concentration inequality is that it does not depend on $\norm \bX$, which guarantees an $o(1)$ approximation error in \eqref{eq:ETrain} as long as $N/\log^{2C_{\sigma}}(N)\gg n$.  

\subsubsection{Cross-validations Approximation}\label{sec:cv}
 
 In the overparameterized regime,  the training error approximation in Theorem~\ref{thm:train_diff} does not directly imply a good approximation of the generalization, but the above analysis of training errors assists us in getting similar approximations on cross-validations of RFRR. 
 Cross-validation (CV) is a common method of model selection and parameter tuning in practice. Especially when practitioners have no access to the data distributions, one can employ CV to approximate the generalization errors of the model \cite{patil2022estimating,jacot2020kernel}.  For more background on cross-validations, we further refer to \cite{arlot2010survey}.

In this subsection,  we focus on leave-one-out cross-validation (LOOCV) and generalized cross-validation (GCV) for the predictors $ \hat f_{\lambda}^{(\RF)}$ and $ \hat f_{\lambda}^{(\KK)}$. Following \cite{hastie2009elements}, LOOCV is defined by 
\begin{equation}\label{eq:CV}
\begin{aligned}
     \CV_n^{(\KK,\lambda)}:=&\frac{1}{n}\sum_{i=1}^n (y_i-\hat{f}_{\lambda,-i}^{(\KK)}(\bx_i))^2,\\
     \CV_n^{(\RF,\lambda)}:=&\frac{1}{n}\sum_{i=1}^n (y_i-\hat{f}_{\lambda,-i}^{(\RF)}(\bx_i))^2,
\end{aligned}
\end{equation}
where $\hat{f}_{\lambda,-i}^{(\KK)}$ and $\hat{f}_{\lambda,-i}^{(\RF)}$ are KRR and RFRR estimators, respectively, on training data set $\bX$ with the data point $\bx_i$ removed. For simplicity, denote $\bK_{\lambda}=\bK +\lambda \Id$ and $\bK_{N,\lambda}=\bK_N +\lambda \Id$. With Schur complement, we can obtain the ``shortcut'' formulae for LOOCV as
\begin{align}
     \CV_n^{(\KK,\lambda)}= & \frac{1}{n}\by^\top \bK_{\lambda}^{-1}\bD^{-2} \bK_{\lambda}^{-1}\by,\label{eq:shortcut}\\
      CV_n^{(\RF,\lambda)}= & \frac{1}{n}\by^\top \bK_{N,\lambda}^{-1}\bD_N^{-2} \bK_{N,\lambda}^{-1}\by,\label{eq:shortcut_N}
\end{align}
where $\bD$ and $\bD_N$ are diagonal matrices with diagonals $[\bD]_{ii}=[\bK_{\lambda}^{-1}]_{ii}$ and $[\bD_N]_{ii}=[\bK_{N,\lambda}^{-1}]_{ii}$, for $i\in[n]$ respectively. The derivations of \eqref{eq:shortcut} and \eqref{eq:shortcut_N} are given in Lemma~\ref{lemm:shortcut} of Appendix~\ref{sec:LOOCV}.

Under certain assumptions, we expect $[\bD]_{ii}$ and $[\bD_N]_{ii}$ to concentrate around $\tr\bK_{\lambda}^{-1}$ and $\tr \bK_{N,\lambda}^{-1}$ respectively. Therefore, as an approximation of LOOCV, we define GCV
\begin{equation}\label{eq:GCV}
\begin{aligned}
      \GCV_n^{(\KK,\lambda)}:= &\left(\lambda\tr (\bK+\lambda\Id)^{-1}\right)^{-2}E_{\train}^{(\KK,\lambda)},\\
      \GCV_n^{(\RF,\lambda)}:=&\left(\lambda\tr (\bK_N+\lambda\Id)^{-1}\right)^{-2}E_{\train}^{(\RF,\lambda)}.
\end{aligned}
\end{equation}
For linear ridge regression models \cite{hastie@2022surprises}, the such approximation is done by applying random matrix theory to replace $\bD_{ii}$ with $\tr \bK_{\lambda}^{-1}$ and $[\bD_N]_{ii}$ with $\tr \bK_{N,\lambda}^{-1}$ in \eqref{eq:shortcut} and \eqref{eq:shortcut_N}, respectively. 

Since these cross-validation estimators are determined by training errors, with Theorem~\ref{thm:train_diff}, we obtain the concentrations of LOOCV and GCV.  Theorem~\ref{thm:cv_diff} reveals that under the ultra-wide regime, i.e., $N/\log^{2C_{\sigma}} N\gg n$, GCV and CV estimators of RFRR are close to the corresponding cross-validations of KRR, respectively.

\begin{theorem}[LOOCV and GCV approximations]\label{thm:cv_diff} 
Under the same assumptions as Theorem~\ref{thm:train_diff}, with probability at least $1-N^{-2}$,  for any $\lambda \ge  0$, when $N/\log^{2C_{\sigma}}(N)\geq C(1+\lambda^2)n$ and  $n\geq n_0$,
{\small
\begin{align} 
    \left|  \GCV_n^{(\KK,\lambda)}- \GCV_n^{(\RF,\lambda)}\right|\leq&    \frac{c(1+\lambda^4)\log^{C_{\sigma}} (N)\norm{\by}^2}{\sqrt{nN}} \label{eq:GCV_K}\\
    \left|  \CV_n^{(\KK,\lambda)}- \CV_n^{(\RF,\lambda)}\right|\leq & \frac{c(1+\lambda^4) \log^{C_{\sigma}} (N)\|\v y\|^2}{\sqrt{nN}}\label{eq:CV_K}
\end{align}
}%
where $C, c>0$ are constants depending only on $\sigma$.
\end{theorem}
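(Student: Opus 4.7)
The strategy is to reduce both the GCV and the LOOCV comparison to two ingredients: (i) the training-error bound of Theorem~\ref{thm:train_diff} and (ii) the operator-norm kernel concentration $\|\bK_N-\bK\|\lesssim \sqrt{n/N}\,\log^{C_\sigma}(N)$ that underlies its proof. I will condition throughout on the high-probability event $\cE$ on which both hold and on which $\|\bK_N-\bK\|$ is small enough that $\lambda_{\min}(\bK_N)\geq \lambda_0/2$, using $\lambda_{\min}(\bK)\geq \lambda_0$ from Proposition~\ref{prop:pkrr}. The workhorse is the resolvent identity
\begin{equation*}
\bK_{N,\lambda}^{-1}-\bK_{\lambda}^{-1}=\bK_\lambda^{-1}(\bK-\bK_N)\bK_{N,\lambda}^{-1},
\end{equation*}
which, combined with $\|\bK_\lambda^{-1}\|\leq(\lambda_0+\lambda)^{-1}$ and $\|\bK_{N,\lambda}^{-1}\|\leq(\lambda_0/2+\lambda)^{-1}$, yields an $O\!\left(\sqrt{n/N}\,\log^{C_\sigma}(N)\right)$ control on $\|\bK_{N,\lambda}^{-1}-\bK_\lambda^{-1}\|$, and hence on both $|\tr\bK_{N,\lambda}^{-1}-\tr\bK_\lambda^{-1}|$ and $\max_i|[\bD_N]_{ii}-[\bD]_{ii}|=\max_i|e_i^\top(\bK_{N,\lambda}^{-1}-\bK_\lambda^{-1})e_i|$.

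For \eqref{eq:GCV_K}, I split
\begin{equation*}
\GCV_n^{(\KK,\lambda)}-\GCV_n^{(\RF,\lambda)}
=\frac{E_{\train}^{(\KK,\lambda)}-E_{\train}^{(\RF,\lambda)}}{(\lambda\tr\bK_{N,\lambda}^{-1})^2}
+E_{\train}^{(\KK,\lambda)}\left[\frac{1}{(\lambda\tr\bK_\lambda^{-1})^2}-\frac{1}{(\lambda\tr\bK_{N,\lambda}^{-1})^2}\right].
\end{equation*}
The first summand is bounded via Theorem~\ref{thm:train_diff} combined with the lower bound $\lambda\tr\bK_{N,\lambda}^{-1}\geq\lambda/(\|\bK_N\|+\lambda)$; the $\lambda^2$ prefactor in Theorem~\ref{thm:train_diff} exactly cancels the $\lambda^{-2}$ from the denominator, leaving only positive powers of $(\|\bK_N\|+\lambda)$. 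For the second summand, I use the closed-form identity $E_{\train}^{(\KK,\lambda)}=\frac{\lambda^2}{n}\by^\top\bK_\lambda^{-2}\by\leq \lambda^2\|\by\|^2/(n(\lambda_0+\lambda)^2)$ to extract another $\lambda^2$ prefactor, which cancels the $\lambda^{-2}$ produced by $a^{-2}-b^{-2}=(b^2-a^2)/(a^2b^2)$; the remaining difference is controlled by the trace concentration from the resolvent step. Collecting the resulting polynomial in $(\|\bK\|+\lambda)$ and $(\|\bK_N\|+\lambda)$ into the crude envelope $c(1+\lambda^4)$ gives \eqref{eq:GCV_K}.

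For \eqref{eq:CV_K}, the shortcut formulas \eqref{eq:shortcut}--\eqref{eq:shortcut_N} reduce the claim to bounding $\|\bK_\lambda^{-1}\bD^{-2}\bK_\lambda^{-1}-\bK_{N,\lambda}^{-1}\bD_N^{-2}\bK_{N,\lambda}^{-1}\|$, which I telescope into three terms, each isolating one perturbation factor ($\bK_\lambda^{-1}-\bK_{N,\lambda}^{-1}$ or $\bD^{-2}-\bD_N^{-2}$). The crucial lower bound $[\bK_\lambda^{-1}]_{ii}\geq 1/\|\bK_\lambda\|\geq 1/(\|\bK\|+\lambda)$ gives $\|\bD^{-1}\|\leq \|\bK\|+\lambda$, and similarly for $\bD_N^{-1}$ on $\cE$; combined with the entrywise identity above this yields $\|\bD_N^{-2}-\bD^{-2}\|\lesssim(\|\bK\|+\lambda)^4\|\bK_{N,\lambda}^{-1}-\bK_\lambda^{-1}\|$. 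Taking the quadratic form against $\by/\sqrt{n}$ contributes the $\|\by\|^2$ factor and gives \eqref{eq:CV_K}.

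The main delicacy is the small-$\lambda$ regime: naively, $(\lambda\tr\bK_\lambda^{-1})^{-2}$ in GCV diverges like $\lambda^{-2}$ and the diagonal-perturbation bound carries large constants. Both potential singularities are neutralized by the $\lambda^2$ factor in Theorem~\ref{thm:train_diff} (for the first GCV summand) and by the identity $E_{\train}^{(\KK,\lambda)}=\frac{\lambda^2}{n}\by^\top\bK_\lambda^{-2}\by$ (for the second GCV summand and for the LOOCV residual form); making these cancellations precise, rather than tracking the routine large-$\lambda$ behavior, is the technical heart of the argument.
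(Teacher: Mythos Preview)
Your decomposition (two-term split for GCV, three-term telescoping for LOOCV via the shortcut formulas) is exactly the paper's, but there is a genuine gap in how you bound the denominators. You repeatedly use $\tr\bK_{N,\lambda}^{-1}\geq 1/(\|\bK_N\|+\lambda)$ and $[\bK_\lambda^{-1}]_{ii}\geq 1/(\|\bK\|+\lambda)$, and then propose to absorb all powers of $(\|\bK\|+\lambda)$ into the constant $c(1+\lambda^4)$. That last step fails: $\|\bK\|$ is \emph{not} bounded independently of $n$ under Assumptions~\ref{assump:sigma}--\ref{assump:asymptotics}. From the Hermite expansion $\bK=\sum_{k\geq 0}\zeta_k^2(\sigma)(\bX^\top\bX)^{\odot k}$, the $k=0$ term alone is $\zeta_0^2(\sigma)\,\mathbf 1\mathbf 1^\top$, with operator norm $\zeta_0^2(\sigma)\,n$; nothing in the assumptions forces $\zeta_0(\sigma)=0$. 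With $\|\bK\|\asymp n$, your LOOCV estimate carries a factor $(\|\bK\|+\lambda)^4\asymp n^4$ and your GCV estimate a factor $(\|\bK\|+\lambda)^2\asymp n^2$, which destroys the $\sqrt{n/N}$ rate.

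The paper's fix is to exploit the exact diagonal identity $[\bK]_{ii}=\E[\sigma(\bw^\top\bx_i)^2]=\|\sigma\|_2^2$ (valid because $\|\bx_i\|=1$). Cauchy--Schwarz applied to $n=\sum_i(\lambda_i+\lambda)^{1/2}(\lambda_i+\lambda)^{-1/2}$ then yields $\tr\bK_\lambda^{-1}\geq(\lambda+\|\sigma\|_2^2)^{-1}$ (Lemma~\ref{lemm:K_lambda_tr}), and the Schur-complement formula $[\bK_\lambda^{-1}]_{ii}=\big([\bK_\lambda]_{ii}-\bK(\bx_i,\bX^{-i})\bK_{-i,\lambda}^{-1}\bK(\bX^{-i},\bx_i)\big)^{-1}$ gives $[\bK_\lambda^{-1}]_{ii}\geq(\lambda+\|\sigma\|_2^2)^{-1}$ (Lemma~\ref{lemm:shortcut}). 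These replace your $1/\|\bK_\lambda\|$ bounds and are what make the final constant depend only on $\sigma$. A related point: the paper never establishes the unnormalized concentration $\|\bK_N-\bK\|\lesssim\sqrt{n/N}\,\log^{C_\sigma}(N)$ you invoke; only the normalized version $\|\bK_\lambda^{-1/2}(\bK_N-\bK)\bK_\lambda^{-1/2}\|\lesssim\sqrt{n/N}\,\log^{C_\sigma}(N)$ of Proposition~\ref{prop:kernel_con} is available. The resolvent bound $\|\bK_\lambda^{-1}-\bK_{N,\lambda}^{-1}\|\lesssim\sqrt{n/N}\,\log^{C_\sigma}(N)$ still holds, but via the factorization $\bK_\lambda^{-1/2}\cdot\bK_\lambda^{-1/2}(\bK_N-\bK)\bK_\lambda^{-1/2}\cdot\bK_\lambda^{1/2}\bK_{N,\lambda}^{-1}$, not through $\|\bK_N-\bK\|$ directly.
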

 
The LOOCV and GCV of the linear model have been analyzed by   \cite{liu2019ridge,xu2021consistent,hastie@2022surprises,patil2022estimating,wei2022more}. As shown by \cite{hastie@2022surprises}, the advantage of LOOCV and GCV is that the optimal ridge parameter tuned by CV is asymptotically the same as the optimal ridge parameter in the high dimensional case. Unlike the results mentioned above, Theorem~\ref{thm:cv_diff} does not require any assumption on data distribution, which opens the door to studying LOOCV and GCV on more general datasets. 

In \cite{jacot2020kernel}, GCV is also called Kernel Alignment Risk Estimator (KARE), and the authors verified that GCV could be used to approximate the generalization error for KRR under a Gaussian universality hypothesis.
In addition, \cite{wei2022more} proved that GCV is a good approximation of the generalization error of the linear ridge regression model when a  local law for data distribution holds. This may imply that $\GCV_n^{(\KK,\lambda)}$ also asymptotically approaches the generalization error of KRR when the deterministic matrix $\bK(\bX,\bX)$ satisfies a local law property. This suggests that the concentrations in Theorem~\ref{thm:cv_diff} could be useful in approximating the generalization error of RFRR. Notably, \cite{wei2022more} considered general datasets under an anisotropic local law hypothesis, while our deterministic data only possesses some orthogonal structures. The proof of Theorem~\ref{thm:cv_diff} in  Appendix \ref{sec:LOOCV} opens a new avenue for analyzing LOOCV and GCV for kernel regression \cite{patil2022estimating}. Following \cite{wei2022more}, as a future research direction, we also expect that the GCV estimator of RFRR will converge to its generalization error under certain extra conditions.

\subsubsection{Generalization Error Approximation}

Different from the controls of in-sample prediction risks and cross-validations in Sections~\ref{sec:train} and \ref{sec:cv}, to investigate the generalization error, we introduce further assumptions on the model and the target function under a student-teacher model. The student-teacher model has been investigated in recent works \cite{gerace2020generalisation,dhifallah2020precise,hu2020universality,goldt2020modeling,loureiro2021learning,lin2021causes,damian2022neural,ba2022high}. Since all the data points $\bx_i$ are deterministic, our model is a fixed design rather than random design \cite{hsu2012random}. 

Denote an unknown teacher function by $f^*: \mathbb R^{d}\to \R$. The training labels are generated by
$
    \v y=f^*(\bX)+\boldsymbol \varepsilon,
$
where $ f^*(\bX)=(f^*(\x_1),\ldots,f^*(\x_n))^\top,$  and $\boldsymbol \varepsilon\sim \mathcal N (\mathbf{0},\sigma_\eps^2\Id)$.  
We impose the following assumptions.
\begin{assumption}\label{assump:target}
Assume that the target function is a nonlinear function with one neuron defined by $f^*(\bx)=\tau(\langle\bbeta,\bx\rangle),$
where the random vector $\bbeta\sim \cN(0, \Id_d)$ and $\tau\in L^4(\mathbb R,\Gamma)$. Suppose that $\zeta_k(\tau)\neq 0$ as long as $\zeta_k(\sigma)\neq 0$, for $0\le k\le \ell$. Training labels are given by $\by=\tau(\bX^\top\bbeta)+\beps \in \mathbb R^n$. 
\end{assumption}

In particular, such an assumption includes the case when  $\sigma$ and $\tau$ are the 
same activation function.

\begin{assumption}\label{assump:testdata}
Suppose  the test data $\bx\in\R^{d}$   satisfies  almost surely,  $\norm{\bx}=1$ and 
\begin{align}
\lim_{n\to\infty }\sqrt{n}\left\| (\bX^\top \bx)^{\odot (\ell+1)}\right\|_2=0.\label{asump:asym_test}
\end{align}  
\end{assumption}

Assumption \ref{assump:testdata} of the test data $\x$ guarantees similar statistical behavior as the training data points in $\bX$, but we do not impose any specific assumption on its distribution. It is promising to utilize such assumption further to handle statistical models with real-world data \cite{liao2020ARM,seddik2020random}.

Assumption \ref{assump:testdata}  holds with high probability in many cases when $\bx_1,\ldots,\bx_n$ are i.i.d.\ samples from some distribution $\cP$: e.g. $\text{Unif}(\mathbb{S}^{d-1})$ and  $\text{Unif}\left(\{\frac{-1}{\sqrt{d}},\frac{+1}{\sqrt{d}}\}\right)$ with $n\asymp d^{\alpha}$ and $\ell=2\lfloor\alpha \rfloor$; an arbitrary distribution  such that \eqref{asump:asym_test} holds almost surely through reject sampling \cite{casella2004generalized}; or an empirical distribution $\mu_{\hat n}$ where $\mu_{\hat n}=\frac{1}{\hat n} \sum_{i=1}^{\hat n}{\delta_{\hat{\bx}_i}}$,
and $\hat{\bx}_1,\dots, \hat{\bx}_{\hat n} $ are deterministic unit vectors such that \eqref{asump:asym_test} holds for each $\hat{\bx}_i, i\in [\hat n]$.

For any predictor denoted by $\hat{f}$, define the \textit{generalization error} (also called test error) to be the following conditional expectation 
\begin{align}\label{eq:def_testerror}
    \mathcal L(\hat f):=\E\left[|\hat f(\x)-f^*(\x)|^2\;\middle|\;\bX \right],
\end{align} 
where the expectation is taken over noise $\beps$, test data $\bx$, and signal $\bbeta$. 
Since the dataset $\bX$ is deterministic in our setting, the conditional expectation in \eqref{eq:def_testerror} becomes 
 $\mathcal L(\hat f)=\E[|\hat f(\x)-f^*(\x)|^2]$. Analogously to the linear case from \cite{ali2019continuous}, this turns out to be the \textit{Bayes risk} for out-of-sample predictors. Viewing $\bbeta$ as a random signal in the teacher model allows us to get a sharper bound of the generalization error in Theorem~\ref{thm:test_diff} below.

Under Assumption \ref{assump:testdata}, let $n_1$ be the smallest integer such that for all $n\geq n_1$,
 \begin{align}\label{assumption:test_orthogonal}
 \sup_{i\in [n]} |\langle \bx, \bx_i \rangle| \leq \frac{1}{\sqrt{2}}, \quad 
 \left\| (\bX^\top \bx)^{\odot (\ell+1)}\right\|_2 \leq  \frac{\sigma_{>\ell}^2}{4\|\sigma\|_4^2}.
 \end{align}
The following approximation holds for the test error between a random feature predictor and the corresponding kernel predictor in ridge regressions.
\begin{theorem}[Generalization error approximation]\label{thm:test_diff}  
Suppose Assumptions \ref{assump:W}, \ref{assump:sigma},  \ref{assump:target}, and \ref{assump:testdata} hold.  
Then, with probability at least $1-\log^{-C_{\sigma}}(N)$, for any $N/\log^{2{C_{\sigma}}} N\geq C_1(1+\lambda^2) n$, $n\geq \max\{n_0,n_1\}$, the difference between test errors of RFRR and KRR satisfies 
\begin{equation}\label{eq:test_diff}
    \left| \mathcal L(\hat{f}_{\lambda}^{(\RF)}(\x))-\mathcal L (\hat{f}_{\lambda}^{(\KK)}(\x))\right| \leq C_2(1+\lambda)\log^{C_{\sigma}}(N)\sqrt{\frac{n}{N}},
\end{equation}
for any $\lambda\geq 0$,  where constant $C_1$ depends only on $\sigma$, and  positive constant $C_2$ depends only on $\sigma,\tau$ and $\sigma_{\eps}$.
\end{theorem}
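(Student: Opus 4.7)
The overall strategy is to reduce $|\mathcal L(\hat f^{(\RF)})-\mathcal L(\hat f^{(\KK)})|$ to a product of an $L^2$-discrepancy and a constant-order amplitude, via the algebraic identity $a^2-b^2=(a-b)(a+b)$:
\[
\mathcal L(\hat f^{(\RF)})-\mathcal L(\hat f^{(\KK)})
=\E_{\bx,\bbeta,\beps}\!\bigl[(\hat f^{(\RF)}(\bx)-\hat f^{(\KK)}(\bx))\,(\hat f^{(\RF)}(\bx)+\hat f^{(\KK)}(\bx)-2f^*(\bx))\bigr].
\]
Applying Cauchy--Schwarz in $L^2(\bx,\bbeta,\beps)$ yields $|\mathcal L(\hat f^{(\RF)})-\mathcal L(\hat f^{(\KK)})|\le \Delta\cdot S$ with $\Delta:=\|\hat f^{(\RF)}-\hat f^{(\KK)}\|_{L^2}$ and $S\le\sqrt{\mathcal L(\hat f^{(\RF)})}+\sqrt{\mathcal L(\hat f^{(\KK)})}$. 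A further $L^2$ triangle inequality $\sqrt{\mathcal L(\hat f^{(\RF)})}\le\sqrt{\mathcal L(\hat f^{(\KK)})}+\Delta$ reduces the task to two subgoals: (A) bound $\mathcal L(\hat f^{(\KK)})=O(1)$; (B) bound $\Delta\le C(1+\lambda)\log^{C_\sigma}(N)\sqrt{n/N}$.

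For (A), a standard KRR bias--variance decomposition suffices. By Assumption~\ref{assump:target}, $\langle\bbeta,\bx\rangle\sim\cN(0,1)$ (since $\|\bx\|=1$), so $\|f^*\|_{L^2}=\|\tau\|_2<\infty$ and $\E\|\by\|^2\lesssim n$. The lower bound $\lambda_{\min}(\bK)\ge\lambda_0$ from Proposition~\ref{prop:pkrr}, combined with the power expansion of $\bK$ in $\langle\bx,\bx_i\rangle$ (cf.\ Section~\ref{sec:approx_kernel}) and Assumption~\ref{assump:testdata} controlling $\bK(\bx,\bX)$ in $L^2(\bx)$, bound both the bias $\E_\bx|\bK(\bx,\bX)(\bK+\lambda\Id)^{-1}f^*(\bX)-f^*(\bx)|^2$ and the variance $\sigma_\eps^2\E_\bx\|\bK(\bx,\bX)(\bK+\lambda\Id)^{-1}\|^2$ by constants depending only on $\sigma,\tau,\sigma_\eps$.

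For (B), I apply the resolvent identity to write
\[
\hat f^{(\RF)}(\bx)-\hat f^{(\KK)}(\bx)
=\bigl[\bK_N(\bx,\bX)-\bK(\bx,\bX)\bigr]\balpha_N+\bK(\bx,\bX)(\balpha_N-\balpha),
\]
with $\balpha_N=(\bK_N+\lambda\Id)^{-1}\by$, $\balpha=(\bK+\lambda\Id)^{-1}\by$, and the second-resolvent identity $\balpha_N-\balpha=-(\bK+\lambda\Id)^{-1}(\bK_N-\bK)\balpha_N$. Bounding the $L^2(\bx)$ norm of each piece relies on three ingredients: (i) the row-wise concentration $\E_\bW\E_\bx\|\bK_N(\bx,\bX)-\bK(\bx,\bX)\|_2^2\le n\|\sigma\|_4^4/N$, which follows from the fact that each coordinate is a mean of $N$ i.i.d.\ centered random variables of variance $\le\|\sigma\|_4^4/N$; (ii) the CK spectral concentration $\|\bK_N-\bK\|\lesssim\log^{C_\sigma}(N)\sqrt{n/N}$ from Proposition~\ref{prop:kernel_con}; and (iii) the resolvent spectral bounds $\|(\bK+\lambda\Id)^{-1}\|,\|(\bK_N+\lambda\Id)^{-1}\|\le 2/\lambda_0$, valid on the high-probability event of (ii). Markov's inequality in $\bW$ converts the expectation bound in (i) into a high-probability statement, at the cost of the weaker probability $1-\log^{-C_\sigma}(N)$ announced in the theorem.

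The main technical obstacle is obtaining the sharp $\sqrt{n/N}$ scaling in $\Delta$ with the correct $(1+\lambda)$ prefactor, rather than the weaker $n/\sqrt{N}$ one would get from naive operator-norm bounds of the form $\|T\|\le\|\bZ\|^{1/2}\|\balpha_N\|$. The remedy is a trace-type analysis of the $L^2(\bx)$ Gram matrices $\E_\bx[(\bK_N(\bx,\bX)-\bK(\bx,\bX))^\top(\bK_N(\bx,\bX)-\bK(\bx,\bX))]$ and $\E_\bx[\bK(\bx,\bX)^\top\bK(\bx,\bX)]$, in the same spirit as step (A) but applied to the discrepancy; this trades a factor of $n$ (from $\|\bZ\|\le\Tr\bZ$) for control via the integrated test-point distribution enforced by Assumption~\ref{assump:testdata}. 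The width hypothesis $N/\log^{2C_\sigma}(N)\ge C_1(1+\lambda^2)n$ is then exactly what is needed to make (i)--(iii) hold simultaneously and keep both resolvents well-conditioned, producing the announced rate.
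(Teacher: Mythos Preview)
Your high-level reduction via Cauchy--Schwarz is sound, and your decomposition of $\hat f^{(\RF)}-\hat f^{(\KK)}$ in (B) parallels the paper's decomposition of the quadratic form $\mathcal L(\hat f^{(\RF)})-\mathcal L(\hat f^{(\KK)})$ (the paper works directly with the expanded traces $E_1-\bar E_1$, $E_2-\bar E_2$ rather than passing through an $L^2$ norm, but the pieces correspond). However, there is a genuine gap that affects both (A) and (B): you never explain how to neutralize the label covariance $\bPsi:=\E_\bbeta[\tau(\bX^\top\bbeta)\tau(\bbeta^\top\bX)]$, which has $\|\bPsi\|=\Theta(n)$ whenever $\zeta_0(\tau)\neq 0$ (it contains a rank-one block $\zeta_0^2(\tau)\mathbf{1}\mathbf{1}^\top$). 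After taking $\E_{\bbeta,\beps}$, every term you write---the ``bias'' in (A), the variance in (A), and both pieces of $\Delta^2$ in (B)---produces a factor $\E_{\bbeta,\beps}[\balpha\balpha^\top]=\bK_\lambda^{-1}(\bPsi+\sigma_\eps^2\Id)\bK_\lambda^{-1}$ (or the $\bK_{N,\lambda}$ analogue). Your ingredients (i)--(iii) only give $\|(\bK+\lambda\Id)^{-1}\|\le 2/\lambda_0$, which combined with $\|\bPsi\|=\Theta(n)$ yields bounds of order $n\cdot n/N$ for $\Delta^2$ and of order $n$ for $\mathcal L(\hat f^{(\KK)})$, not $n/N$ and $O(1)$. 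Your proposed ``trace-type analysis'' does not address this: trading $\|\bZ\|$ for $\Tr\bZ$ helps with the $\bK_N(\bx,\bX)-\bK(\bx,\bX)$ factor, but the $\bPsi$ factor sits on the \emph{other} side of the trace and is not controlled by Assumption~\ref{assump:testdata}.

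The missing idea, which the paper isolates as a separate lemma, is the kernel-domination bound $\|\bK_\lambda^{-1/2}\bPsi\bK_\lambda^{-1/2}\|\le C$. This is precisely where the Hermite-coefficient hypothesis in Assumption~\ref{assump:target} (``$\zeta_k(\tau)\neq 0$ whenever $\zeta_k(\sigma)\neq 0$ for $k\le\ell$'') is used---and you never invoke it. The argument is: expand $\bPsi$ and $\bK$ in Hermite coefficients; the hypothesis guarantees that each low-degree block $\zeta_k^2(\tau)(\bX^\top\bX)^{\odot k}$ of $\bPsi_\ell$ is dominated by a constant multiple of the corresponding block of $\bK_\ell$, so $\bPsi_\ell\preccurlyeq C\bK_{\ell,\lambda}$, and Proposition~\ref{prop:pkrr} transfers this to $\bPsi\preccurlyeq C'\bK_\lambda$. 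A companion Schur-complement bound $\bK(\bx,\bX)\bK_\lambda^{-1}\bK(\bX,\bx)\le \|\sigma\|_2^2+\lambda$ (obtained by enlarging $\bK$ to an $(n{+}1)\times(n{+}1)$ positive-definite block matrix including the test point) is also needed and is likewise absent from your sketch. With these two normalized bounds in hand, your Cauchy--Schwarz route would go through and give the stated rate; without them, neither (A) nor (B) closes.
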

 When the width $N/\log^{2 C_{\sigma}}(N)\gg n$, the right-hand side of \eqref{eq:test_diff} is vanishing. In other words, RFRR has the same generalization error as KRR for ultra-wide neural networks. Notice that Theorem~\ref{thm:test_diff} covers the ridge-less regression case when $\lambda=0$.

\subsection{Approximation of KRR by a Polynomial KRR}
 In this subsection, we study a polynomial kernel ridge regression (PKRR) induced by the polynomial kernel $\bK_\ell$ in \eqref{def:Phi0}. We define an inner-product kernel by
\begin{equation*}
    \bK_{\ell}(\x,\z):= 
    \begin{cases} 
      \|\sigma\|_2^2, & \text{if } \x=\z \\
      \sum_{k=0}^\ell \zeta_k^2(\sigma)(\x^\top\z)^k, & \text{otherwise},
   \end{cases}
\end{equation*}
for any $\bx,\bz\in \mathbb R^d$. The parameter $\ell $ defined by Assumption~\ref{assump:asymptotics}, is determined by orthogonality among different data points in the training set. In practice, it is hard to implement the expected kernel $\bK$, whereas this truncated kernel $\bK_{\ell}$ with finite many parameters is a simpler model for implementation and theoretical analysis. Similarly, with \eqref{eq:RFridge} and \eqref{eq:Kridge}, the predictor for kernel regression with respect to $\bK_{\ell}$ is denoted by
\begin{align} \label{eq:Kl_ridge}
\hat{f}_{\lambda}^{(\ell)}(\x):=\bK_{\ell}(\x,\bX) (\bK_{\ell}+\lambda\Id)^{-1} \v y,
\end{align}
where, by an abuse of notation, we use $\bK_{\ell}$ to denote the $n\times n$ polynomial kernel matrix  $\bK_{\ell}(\bX,\bX)$. For simplicity, denote $\bK_{\ell, \lambda}:=\bK_{\ell}+\lambda\Id$ for any $\lambda\ge 0$.

Based on Proposition \ref{prop:pkrr}, we show that the performances of KRR with kernel $\bK$ can be approached by the performances of $\hat{f}_{\lambda}^{(\ell)}$. Denote the training error, the CV, GCV and test error for $\bK_{\ell}$ as $E_{\train}^{(\ell,\lambda)}$,  $\CV_n^{(\ell,\lambda)}$, $\GCV_n^{(\ell,\lambda)}$, and $\mathcal L(\hat{f}_{\lambda}^{(\ell)}(\x))$ respectively. By replacing $\bK$ by $\bK_\ell$, we can define these estimators of the PKRR similarly with \eqref{eq:Etrain_K}, \eqref{eq:CV}, and \eqref{eq:GCV}.  
Denote $\tilde\bX=[\bX,\bx]\in\R^{d\times (n+1)}$  the concatenation of training and test data points. Denote
   \[\Delta_{\ell}=\left\| \left( \bX^\top \bX\right)^{\odot \ell+1}-\Id\right\|_F ,\quad  \tilde\Delta_\ell=\left\| \left(\tilde \bX^\top \tilde \bX\right)^{\odot \ell+1}-\Id\right\|_F.\] 
Under \eqref{assump:asymptotic_l} and \eqref{asump:asym_test}, we have $\Delta_\ell, \tilde\Delta_\ell=o_n(1).$

\begin{theorem}\label{thm:pkrr}
Suppose  Assumptions \ref{assump:sigma} and \ref{assump:asymptotics}   hold. Then  for $n\geq n_0$,
{\small\begin{align}
    \left| E_{\train}^{(\ell,\lambda)}-E_{\train}^{(\KK,\lambda)}\right|\leq  &\frac{C_1\lambda^2 \|\by\|^2}{n} \Delta_\ell,\label{eq:Kl_train}\\
       \left| \GCV_n^{(\ell,\lambda)}-\GCV_n^{(\KK,\lambda)}\right|\leq  &\frac{C_1 (1+\lambda^4) \norm{\by}^2}{n}\Delta_\ell, \label{eq:GCVn}\\
    \left| \CV_n^{(\ell,\lambda)}-\CV_n^{(\KK,\lambda)}\right|\leq  & \frac{C_1 (1+\lambda^4) \norm{\by}^2}{n}\Delta_\ell,\label{eq:CVn}
\end{align}
}%
where $C_1>0$ depend only on $\sigma$.
Furthermore, with Assumptions \ref{assump:target} and \ref{assump:testdata}, for $n\geq \max\{n_0,n_1\}$, the generalization errors of KRR and PKRR satisfy that 
\begin{equation}\label{eq:Kl_test}
     \left| \mathcal L(\hat{f}_{\lambda}^{(\ell)}(\x))-\mathcal L (\hat{f}_{\lambda}^{(\KK)}(\x))\right| \le C_2(1+\lambda)\tilde \Delta_\ell,
\end{equation}for  some constant $C_2>0$ only depending on $\sigma, \tau, \sigma_\eps$.
\end{theorem}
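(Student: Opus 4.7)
My plan treats the whole theorem as a single resolvent perturbation, with Proposition~\ref{prop:pkrr} as the only nontrivial input. From Proposition~\ref{prop:pkrr}, $\|\bK-\bK_\ell\|\le \sqrt{2}\|\sigma\|_4^2\Delta_\ell\le \sigma_{>\ell}^2/(2\sqrt{2})$ and $\lambda_{\min}(\bK)\ge \sigma_{>\ell}^2/2=:\lambda_0$, so Weyl's inequality gives $\lambda_{\min}(\bK_\ell)\ge \lambda_0':=\lambda_0-\sigma_{>\ell}^2/(2\sqrt{2})>0$. For every $\lambda\ge 0$ both $\bK_\lambda^{-1}$ and $\bK_{\ell,\lambda}^{-1}$ have operator norm at most $1/\lambda_0$, and the resolvent identity $\bK_\lambda^{-1}-\bK_{\ell,\lambda}^{-1}=\bK_{\ell,\lambda}^{-1}(\bK_\ell-\bK)\bK_\lambda^{-1}$ yields $\|\bK_\lambda^{-1}-\bK_{\ell,\lambda}^{-1}\|\le C_\sigma\Delta_\ell$. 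This is the perturbation engine used everywhere below.

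\textbf{Training error, GCV, LOOCV.} Plugging the closed form $E_{\train}^{(\KK,\lambda)}=(\lambda^2/n)\|\bK_\lambda^{-1}\by\|^2$ (and similarly for PKRR) into $|\|u\|^2-\|v\|^2|\le(\|u\|+\|v\|)\|u-v\|$ gives \eqref{eq:Kl_train} at once. For \eqref{eq:GCVn}, I split
\[\GCV_n^{(\KK,\lambda)}-\GCV_n^{(\ell,\lambda)}=\frac{E_{\train}^{(\KK,\lambda)}-E_{\train}^{(\ell,\lambda)}}{(\lambda\tr\bK_\lambda^{-1})^2}+E_{\train}^{(\ell,\lambda)}\Bigl[\tfrac{1}{(\lambda\tr\bK_\lambda^{-1})^2}-\tfrac{1}{(\lambda\tr\bK_{\ell,\lambda}^{-1})^2}\Bigr],\]
control the first piece via \eqref{eq:Kl_train}, and for the second piece combine $|\tr(\bK_\lambda^{-1}-\bK_{\ell,\lambda}^{-1})|\le \|\bK_\lambda^{-1}-\bK_{\ell,\lambda}^{-1}\|=O(\Delta_\ell)$ with a uniform lower bound $\lambda\tr\bK_\lambda^{-1}\ge c\lambda/(\|\sigma\|_2^2+\lambda)$. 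The latter is proved from the near-diagonal structure of $\bK$ under Assumption~\ref{assump:asymptotics}: writing $\bK=\zeta_0^2\ones\ones^\top+\bK^\sharp$ with $\bK^\sharp$ having all but possibly one eigenvalue in a bounded neighbourhood of $\|\sigma\|_2^2$, most terms in the average $\tr\bK_\lambda^{-1}$ are $\approx 1/(\|\sigma\|_2^2+\lambda)$, and the polynomial-in-$\lambda$ prefactor $1+\lambda^4$ drops out after assembling the pieces. The LOOCV bound \eqref{eq:CVn} is the same mechanism applied to the shortcut \eqref{eq:shortcut}: telescope $\bK_\lambda^{-1}\bD^{-2}\bK_\lambda^{-1}-\bK_{\ell,\lambda}^{-1}\bD_\ell^{-2}\bK_{\ell,\lambda}^{-1}$ into three pieces, each controlled by either $\|\bK_\lambda^{-1}-\bK_{\ell,\lambda}^{-1}\|$ or $\|\bD^{-2}-\bD_\ell^{-2}\|$. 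For the latter I first establish $[\bD]_{ii}=[\bK_\lambda^{-1}]_{ii}\ge c/(\|\sigma\|_2^2+\lambda)$ via the same near-diagonal expansion, after which $|[\bD]_{ii}-[\bD_\ell]_{ii}|\le \|\bK_\lambda^{-1}-\bK_{\ell,\lambda}^{-1}\|$ suffices.

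\textbf{Generalization error.} Decompose
\begin{align*}
\hat f_\lambda^{(\KK)}(\bx)-\hat f_\lambda^{(\ell)}(\bx)=~&\underbrace{[\bK(\bx,\bX)-\bK_\ell(\bx,\bX)]\bK_\lambda^{-1}\by}_{T_1}\\
&+\underbrace{\bK_\ell(\bx,\bX)\bK_{\ell,\lambda}^{-1}(\bK_\ell-\bK)\bK_\lambda^{-1}\by}_{T_2}.
\end{align*}
A Hermite expansion identical to the proof of Proposition~\ref{prop:pkrr}, now applied to the row vector $\bK(\bx,\bX)-\bK_\ell(\bx,\bX)$ in place of the matrix $\bK-\bK_\ell$, gives $\|\bK(\bx,\bX)-\bK_\ell(\bx,\bX)\|_2\le C\|(\bX^\top\bx)^{\odot(\ell+1)}\|_2$. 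Combining $|a^2-b^2|\le|a-b|(|a|+|b|)$ with Cauchy--Schwarz in $(\bx,\bbeta,\beps)$ gives
\[\bigl|\mathcal L(\hat f_\lambda^{(\KK)})-\mathcal L(\hat f_\lambda^{(\ell)})\bigr|\le \sqrt{\E(T_1+T_2)^2}\Bigl(\sqrt{\mathcal L(\hat f_\lambda^{(\KK)})}+\sqrt{\mathcal L(\hat f_\lambda^{(\ell)})}\Bigr).\]
Using $\E\|\by\|^2=n(\|\tau\|_2^2+\sigma_\eps^2)$ and the identity $\tilde\Delta_\ell^2=\Delta_\ell^2+2\|(\bX^\top\bx)^{\odot(\ell+1)}\|_2^2$ to repackage $T_1$ and $T_2$ jointly into $\tilde\Delta_\ell$, and cancelling the $\sqrt n$ from $\|\bK_\ell(\bx,\bX)\|_2\le\sqrt n\,\|\sigma\|_2^2$ against the $1/\sqrt n$ decay implicit in \eqref{asump:asym_test}, the bound $C(1+\lambda)\tilde\Delta_\ell$ in \eqref{eq:Kl_test} follows.

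\textbf{Main obstacle.} The delicate step is obtaining a uniform-in-$n$ upper bound on $\mathcal L(\hat f_\lambda^{(\KK)})$ and $\mathcal L(\hat f_\lambda^{(\ell)})$, since the naive chain $\|\bK(\bx,\bX)\|_2\cdot\|\bK_\lambda^{-1}\|\cdot\|\by\|$ is already $\Omega(n)$, which would destroy the bound. My plan is to integrate out $(\bbeta,\beps)$ first, using $\E_{\bbeta,\beps}[\by\by^\top]=\bK_\tau+\sigma_\eps^2\Id$ with $\bK_\tau$ the teacher kernel, then integrate out $\bx$ using $\E_\bx[\bK(\bx,\bX)^\top\bK(\bx,\bX)]$ computed by Hermite expansion; finally exploit that $\bK$, $\bK_\ell$ and $\bK_\tau$ are all simultaneously almost diagonal under Assumption~\ref{assump:asymptotics} so that the resulting quadratic forms collapse from $\Omega(n)$ to $O(1)$.
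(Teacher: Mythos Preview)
Your treatment of the training error, GCV, and LOOCV is essentially the paper's: resolvent perturbation from Proposition~\ref{prop:pkrr}, the same two-term GCV split, and the same LOOCV shortcut telescoping. One correction: your justification of the lower bound $\tr\bK_\lambda^{-1}\ge c/(\|\sigma\|_2^2+\lambda)$ via ``$\bK^\sharp$ having all but possibly one eigenvalue in a bounded neighbourhood of $\|\sigma\|_2^2$'' is not supported by Assumption~\ref{assump:asymptotics}, which only controls $(\bX^\top\bX)^{\odot(\ell+1)}$ and says nothing about the spectrum of the lower-degree Hadamard powers entering $\bK$. The paper instead gets the trace bound by Cauchy--Schwarz from $\tr\bK_\lambda=\lambda+\|\sigma\|_2^2$, and the LOOCV diagonal bound $[\bK_\lambda^{-1}]_{ii}\ge(\lambda+\|\sigma\|_2^2)^{-1}$ from the Schur complement identity $[\bK_\lambda^{-1}]_{ii}=1/([\bK_\lambda]_{ii}-\ast)$. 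These are one-line fixes, not serious gaps.

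The genuine gap is in the generalization error. You correctly isolate the obstacle---the naive chain $\|\bK(\bx,\bX)\|\cdot\|\bK_\lambda^{-1}\|\cdot\|\by\|$ is $\Omega(n)$---but your proposed resolution (``$\bK$, $\bK_\ell$ and $\bK_\tau$ are all simultaneously almost diagonal \ldots\ so that the resulting quadratic forms collapse from $\Omega(n)$ to $O(1)$'') fails for the same reason as above: none of these kernels is close to diagonal under Assumption~\ref{assump:asymptotics}. The paper resolves the obstacle through two specific lemmas that you do not mention. First, a Schur complement argument on the augmented kernel $\tilde\bK=\bK(\tilde\bX,\tilde\bX)$ (positive definite by the analogue of Proposition~\ref{prop:pkrr} on $\tilde\bX$) gives $\bK_m^\top\bK_\lambda^{-1}\bK_m<\|\sigma\|_2^2+\lambda$, i.e.\ $\|\bK_\lambda^{-1/2}\bK_m\|^2=O(1+\lambda)$; this absorbs the $\sqrt n$ coming from $\|\bK(\bx,\bX)\|$. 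Second, the Hermite-coefficient hypothesis in Assumption~\ref{assump:target} ($\zeta_k(\tau)\ne 0$ whenever $\zeta_k(\sigma)\ne 0$ for $k\le\ell$) yields $\bPsi_\ell\preccurlyeq C\bK_{\ell,\lambda}$ at the level of truncated kernels, hence $\|\bK_\lambda^{-1/2}\bPsi\bK_\lambda^{-1/2}\|=O(1)$; this absorbs the factor of $n$ coming from $\E\|\by\|^2$. With these two ingredients, either your Cauchy--Schwarz route or the paper's direct expansion of $\mathcal L$ into quadratic pieces goes through and every term is bounded by $O(1+\lambda)$ times $\|\bK-\bK_\ell\|$ or $\|\bK_m-\bK_{m,\ell}\|$, both $O(\tilde\Delta_\ell)$. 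Neither lemma follows from any near-diagonality heuristic, and the second is precisely where Assumption~\ref{assump:target} is used---your proposal never invokes it.
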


Based on the definition of $\ell$ in \eqref{assump:asymptotics}, if  the training labels satisfy $\norm \by^2=O(n)$, the left-hand sides of \eqref{eq:Kl_train}-\eqref{eq:Kl_test} are all vanishing as $n\to\infty$. Combining the concentration between RFRR and KRR in Section~\ref{sec:K-K_N}, we can now conclude that, in terms of training/test errors and cross-validations, the performance of the RFRR is close to the performance of PKRR defined in \eqref{eq:Kl_ridge} with high probability as long as  $N/\log^{2C_{\sigma}} N\gg n$ and $n\to\infty$. Therefore, the behaviors of the RFRR generated by ultra-wide neural networks can be characterized by a much simpler PKRR induced by the expected kernel $\bK$. For \eqref{eq:Kl_test}, we can actually verify the estimators $\hat{f}_{\lambda}^{(\KK)}(\x)$ and $\hat{f}_{\lambda}^{(\RF)}(\x)$ are polynomials of $\bx$ with degree at most $\ell$, which is analogous to the second part of \cite[Theorem C.2]{donhauser2021rotational}.  Similar results on neural tangent feature regression are proved by \cite{montanari2020interpolation} for uniform spherical distributed data.
Due to this simplification, we can further obtain a lower bound of the generalization error of RFRR in the next subsection.

\subsection{Polynomial Approximation Barrier for RFRR}
The \textit{polynomial approximation barrier} refers to the case when an estimator $\hat{f}_\lambda$ cannot learn any polynomial with a degree larger than a certain threshold \cite{donhauser2021rotational}. This phenomenon has been shown in both RFRR and KRR \cite{mei2019generalization,ghorbani2021linearized,mei2021generalization,donhauser2021rotational} under specific data distribution assumptions, e.g., uniform
distributions on the unit sphere or hypercubes (or more general distributions with hypercontractivity assumptions and proper eigenvalue decays) and anisotropic distributions with covariance structures \cite{loureiro2021learning,gerace2022gaussian}.  

Define $P_{> \ell}: L^2(\mathbb R, \Gamma) \to L^2(\mathbb R, \Gamma) $ as the projection onto the span of Hermite polynomials defined in \eqref{eq:hermite} with degrees at least $\ell+1$. Specifically, recalling $\bbeta\sim \cN(0,\Id)$ and $\norm{\bx}=1$, we can get 
$\left( P_{> \ell}f^*\right)(\bx)=\sum_{k\geq \ell+1} \zeta_k(\tau)h_k(\bbeta^\top\bx),$
where  $\zeta_k(\tau)$ is defined by \eqref{eq:hermite_coeff}. Denote
$$
\|P_{> \ell}f^*\|_{2}^2 = \mathbb E_{\bx, \bbeta}\left( P_{>\ell}f^*(\bx)\right)^2  =\sum_{k\geq \ell+1} \zeta_k^2(\tau).
$$
In the following theorem, we prove that the polynomial approximation barrier for RFRR is related to the $\ell$-orthonormal properties of the training data. 
Theorem~\ref{thm:train_diff} and Theorem~\ref{thm:test_diff} verify that the RFRR achieves the same errors as KRR, as long as $N$ is sufficiently large. Meanwhile, Theorem~\ref{thm:pkrr} shows KRR can be further approximated by a simpler polynomial kernel model, whose degree $\ell$ is determined by the $\ell$-orthonormal property in \eqref{assump:asymptotics}.
Combining these together, RFRR induced by an ultra-wide neural network is asymptotically equivalent to an $\ell$-degree PKRR, which naturally implies that RFRR is unable to learn any function with higher-degree terms consistently.

\begin{theorem}[Lower bound of the generalization error for RFRR]\label{thm:lower_bound}
Under the assumptions of Theorem~\ref{thm:test_diff}, with  probability at least $1-\log^{-C_\sigma}(N)$, 
when $N/\log^{2C_{\sigma}} N\gg n,$ 
\begin{align}
 \mathcal L(\hat{f}_{\lambda}^{(\RF)})
 &\ge \|P_{> \ell}f^*\|^2_{2}+\sigma_{\beps}^2\E_\bx\left[\bK_{m,\ell}^\top\bK_{\lambda,\ell}^{-2}\bK_{m,\ell}\right]-o_n(1)\geq \|P_{> \ell}f^*\|^2_{2}-o_n(1),\label{eq:simplified_asym}
\end{align}
where $\bK_{m,\ell}:=\bK_\ell(\bX,\bx), \bK_{\ell,\lambda}:=\lambda\Id+\bK_\ell(\bX,\bX)$.
\end{theorem}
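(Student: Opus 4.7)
My plan is to reduce the lower bound for $\mathcal L(\hat f_{\lambda}^{(\RF)})$ to one for the polynomial KRR $\hat f_{\lambda}^{(\ell)}$ by chaining the two approximation results already in hand, and then evaluate the PKRR risk via a Hermite bias-variance decomposition. On the high-probability event where Theorem~\ref{thm:test_diff} holds, combining that theorem with the generalization error bound \eqref{eq:Kl_test} of Theorem~\ref{thm:pkrr} gives
\[
\mathcal L(\hat f_{\lambda}^{(\RF)}) \;\geq\; \mathcal L(\hat f_{\lambda}^{(\ell)}) - o_n(1)
\]
in the ultra-wide regime $N/\log^{2C_{\sigma}}(N) \gg n$. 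Thus it suffices to prove the claimed lower bound for $\mathcal L(\hat f_{\lambda}^{(\ell)})$.

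Set $\ba(\bx) := \bK_{\lambda,\ell}^{-1}\bK_{m,\ell}(\bx)$, so the PKRR predictor splits as $\hat f_{\lambda}^{(\ell)}(\bx) = \ba(\bx)^\top \tau(\bX^\top \bbeta) + \ba(\bx)^\top \beps$. Since $\beps$ is mean-zero, independent of $(\bbeta, \bx)$, with covariance $\sigma_{\eps}^2 \Id$, integrating over $\beps$ first gives the bias-variance split
\begin{equation*}
\mathcal L(\hat f_{\lambda}^{(\ell)}) = \E_{\bbeta, \bx}\bigl[\bigl(\ba^\top \tau(\bX^\top \bbeta) - \tau(\bbeta^\top \bx)\bigr)^2\bigr] + \sigma_{\eps}^2\, \E_{\bx}\bigl[\bK_{m,\ell}^\top \bK_{\lambda,\ell}^{-2} \bK_{m,\ell}\bigr],
\end{equation*}
reproducing exactly the variance contribution in \eqref{eq:simplified_asym}. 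The remaining task is to bound the bias below by $\|P_{>\ell} f^*\|_2^2 - o_n(1)$.

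Decompose $\tau = \tau_{\leq \ell} + \tau_{>\ell}$ in the Hermite basis and split the bias accordingly as $B_{\leq \ell} + B_{>\ell}$, with $B_{>\ell} = \ba^\top \tau_{>\ell}(\bX^\top \bbeta) - \tau_{>\ell}(\bbeta^\top \bx)$. The first key step is $\E_{\bbeta}[B_{\leq \ell} B_{>\ell}] = 0$: every summand is an expectation of a product $h_j(\bu^\top \bbeta) h_k(\bv^\top \bbeta)$ with $j \leq \ell < k$ and unit vectors $\bu, \bv$, which vanishes by the Hermite orthogonality \eqref{eq:orthogonal_relation}. Hence $\E[(B_{\leq \ell}+B_{>\ell})^2] \geq \E[B_{>\ell}^2]$, and another application of \eqref{eq:orthogonal_relation} expands
\[
\E_{\bbeta}[B_{>\ell}^2] = \sum_{k > \ell} \zeta_k^2(\tau)\bigl[\ba^\top (\bX^\top \bX)^{\odot k} \ba - 2\,\ba^\top (\bX^\top \bx)^{\odot k} + 1\bigr].
\]
For each $k > \ell$, the entry-wise monotonicity $|\bx_i^\top \bx_j|^{2k} \leq |\bx_i^\top \bx_j|^{2(\ell+1)}$ (from $|\bx_i^\top \bx_j|\leq 1$) together with \eqref{assump:asymptotic_l} forces $\|(\bX^\top\bX)^{\odot k} - \Id\| \to 0$ uniformly in $k$, so $(\bX^\top\bX)^{\odot k}$ is invertible with $((\bX^\top\bX)^{\odot k})^{-1} \preceq 2\Id$ for all $k > \ell$ once $n$ is large. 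Completing the square in $\ba$ then lower-bounds each bracket by $1 - 2\|(\bX^\top \bx)^{\odot k}\|_2^2 \geq 1 - 2\|(\bX^\top \bx)^{\odot(\ell+1)}\|_2^2$, uniformly in $k$. Summing in $k$ and taking $\E_{\bx}$ via Assumption~\ref{assump:testdata} yields $\E[B_{>\ell}^2] \geq \|P_{>\ell} f^*\|_2^2 - o_n(1)$, giving the first inequality in \eqref{eq:simplified_asym}; the second is immediate from non-negativity of the variance term.

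The main obstacle is securing the completing-the-square argument uniformly over all $k > \ell$ rather than only at the threshold $k = \ell+1$: the entry-wise monotonicity $|\bx_i^\top \bx_j|^{2k} \leq |\bx_i^\top \bx_j|^{2(\ell+1)}$ on unit vectors collapses the whole tail of quadratic forms to the single asymptotic input from Assumption~\ref{assump:asymptotics}, which is what makes the bound effective without extra distributional hypotheses. A secondary technicality is exchanging $\E_{\bx}$ with the $o_n(1)$ decay in \eqref{asump:asym_test}; in the concrete examples mentioned after Assumption~\ref{assump:testdata} (uniform samples on the sphere or hypercube with $n\asymp d^{\alpha}$), the required $L^1$ convergence of $\|(\bX^\top \bx)^{\odot(\ell+1)}\|_2^2$ to zero follows from direct moment estimates on $\bx^\top \bx_i$, completing the argument.
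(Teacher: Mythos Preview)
Your argument is correct and follows a genuinely different route from the paper. The paper works directly with the KRR predictor $\hat f_\lambda^{(\KK)}$: it splits both $f^*$ and $\hat f_\lambda^{(\KK)}$ into Hermite degree $\leq\ell$ and $>\ell$ pieces (placing the noise $\beps$ into the high-degree part), drops the low-degree square by orthogonality, and then bounds the cross term $\E[P_{>\ell}f^*(\bx)\cdot P_{>\ell}\hat f_\lambda^{(\KK)}(\bx)]$ directly via the crude estimate $\|\bK(\bx,\bX)\|\leq \sqrt{2n}\,\|\sigma\|_4^2$, which produces an error of order $\sqrt{n}\,\|(\bX^\top\bx)^{\odot(\ell+1)}\|_2$. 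Only afterwards do they invoke Theorem~\ref{thm:test_diff} to pass to RFRR and Proposition~\ref{prop:pkrr} to replace the KRR variance by the PKRR variance. You instead reduce to the PKRR first, so that the variance term $\sigma_\eps^2\,\E_\bx[\bK_{m,\ell}^\top\bK_{\lambda,\ell}^{-2}\bK_{m,\ell}]$ drops out of the bias--variance split immediately, and then handle the high-degree bias by completing the square in $\ba$. This yields the sharper residual $O(\|(\bX^\top\bx)^{\odot(\ell+1)}\|_2^2)$ with no $\sqrt n$ prefactor, and the uniform-in-$k$ control via the entrywise monotonicity $|\langle\bx_i,\bx_j\rangle|^{2k}\leq |\langle\bx_i,\bx_j\rangle|^{2(\ell+1)}$ is the right device. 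Regarding your final caveat about exchanging $\E_\bx$ with the $o_n(1)$: you do not need to retreat to special examples. Under the standing hypotheses of Theorem~\ref{thm:test_diff} one has, for $n\geq n_1$, the almost-sure bound $\|(\bX^\top\bx)^{\odot(\ell+1)}\|_2\leq \sigma_{>\ell}^2/(4\|\sigma\|_4^2)$ from \eqref{assumption:test_orthogonal}, so dominated convergence applies directly and $\E_\bx[\|(\bX^\top\bx)^{\odot(\ell+1)}\|_2^2]\to 0$ follows from Assumption~\ref{assump:testdata} without further work.
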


In Theorem~\ref{thm:lower_bound}, we specifically consider a test data point  with the $\ell$-orthonormal property. This simplifies the teacher model in Assumption~\ref{assump:target} since $f^*(\bx)$ has the same  in distribution as $\tau(\xi)$ for $\xi\sim\cN(0,1)$. Therefore, Theorem~\ref{thm:lower_bound} reveals that RFRR predictor $\hat{f}_{\lambda}^{(\RF)}$ cannot learn the higher degree terms in the Hermite expansion of target function $\tau$. This threshold $\ell$ is determined by the $\ell$-orthonormal property of $\bX$ in \eqref{assump:asymptotics}. The more orthogonal the data points in $\bX$ are, the lower degree of Hermite polynomials this RFRR predictor can learn consistently.

\begin{remark}[The variance term]
    The second term in the first lower bound of \eqref{eq:simplified_asym} is related to the variance term in the generalization error of PKRR. This term can be further simplified based on some additional assumptions on the data distribution. Specifically, \cite[Theorem 2]{liang2020multiple} validated that for sub-Gaussian data, 
    $$\Tr\bK_{\ell,\lambda}^{-1}\E_{\bx}[\bK_{\ell}(\bX,\bx)\bK_{\ell}(\bx,\bX)] \bK_{\ell, \lambda}^{-1}\lesssim \frac{d^\alpha}{n}+\frac{n}{d^{\alpha+1}}
    $$ with high probability, when $d^\alpha\log d\lesssim n\lesssim d^{\alpha+1}$. Hence, this bound is vanishing in this polynomial regime (see also \cite[Secion 4]{bartlett2021deep}). In contrast, under the critical regime $n\asymp d^\alpha$, this variance term, in KRR of any inner-product kernel for uniform spherical distribution, is provably non-degenerate, determined by the Marchenko-Pastur distribution, and may even result in a peak in the prediction curve \cite{misiakiewicz2022spectrum,hu2022sharp,xiao2022precise}.
\end{remark}

\begin{remark}[Comparison to previous work with random dataset]

The lower bounds in Theorem~\ref{thm:lower_bound} exhibit the limitation of the RFRR and KRR: \eqref{eq:simplified_asym} implies RFRR estimator cannot learn any higher degree polynomials. This is useful when we aim to show that some estimator is superior to this RFRR 
 estimator \cite{ba2022high,damian2022neural}. Compared with the results of \cite{ghorbani2021linearized,mei2021generalization}, our results cover more general training datasets for RFRR, though it is not optimal in some specific circumstances (see Remark~\ref{rmk:compare_to_random}), and we only address the single-neuron student-teacher model. Since we study RFRR on a general dataset without any data distribution assumptions, we cannot obtain a more precise characterization of the generalization error as the results by \cite{mei2021generalization}. On the other hand, \cite{donhauser2021rotational} exhibited a lower bound $\|P_{>(2\lfloor2\alpha \rfloor) }f^*\|^2_{2}$ on the generalization error for kernel ridge regression with a general rotational invariant kernel (which is $\|P_{>(2\lfloor\alpha \rfloor) }f^*\|^2_{2}$ when data $\bx$ has unit length),  where the dataset $\bX\in\R^{d\times n}$ is random and satisfies $n\asymp d^\alpha$.  Under more general assumptions on the dataset $\bX$, we obtain a similar lower bound  $\|P_{>(2\lfloor\alpha \rfloor) }f^*\|^2_{2}$ from \eqref{eq:simplified_asym} for both RFRR and its corresponding KRR.
\end{remark}

\section{Simulations}\label{sec:simulate}
\begin{figure}[ht]  
\centering
\begin{minipage}[t]{0.488\linewidth}
\centering 
{\includegraphics[width=0.99\textwidth]{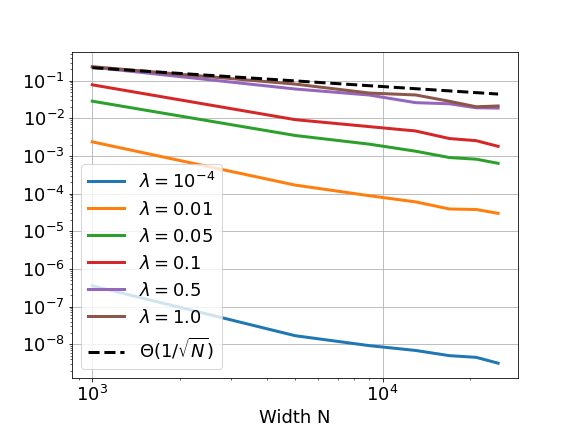}} \\
{\small (a) Training error}
\end{minipage} 
\begin{minipage}[t]{0.488\linewidth}
\centering
{\includegraphics[width=0.99\textwidth]{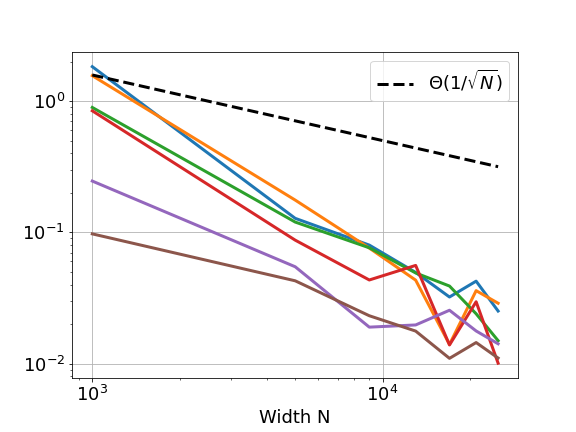}} \\
{\small (b) LOOCV error}  
\end{minipage} 
\begin{minipage}[t]{0.488\linewidth}
\centering 
{\includegraphics[width=0.99\textwidth]{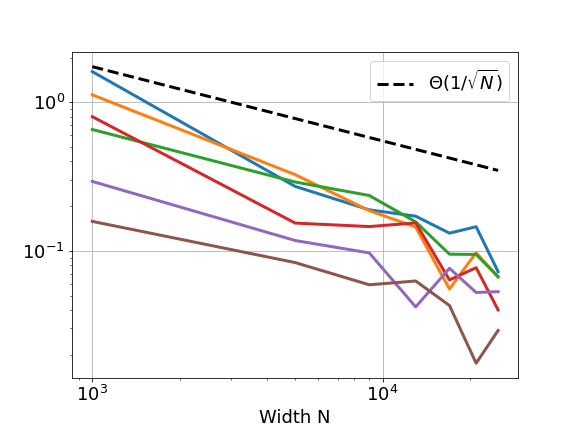}} \\
{\small (c) GCV error}
\end{minipage} 
\begin{minipage}[t]{0.488\linewidth}
\centering
{\includegraphics[width=0.99\textwidth]{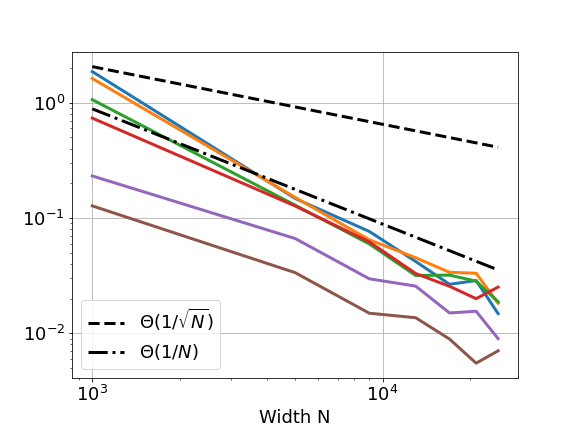}} \\
{\small (d) Test error}  
\end{minipage} 
\vspace{-0.15cm}
\caption{\small
Differences between KRR and RFRR with various ridge parameters $\lambda$ for (a) training errors, (b) LOOCV errors, (c) GCV errors, and (d) generalization errors. Data $\bX$ is i.i.d.\ sampled from uniform distribution $\text{Unif}(\mathbb{S}^{d-1})$ with $d=n = 500$ and training label noise $\sigma_\eps = 0.6$. We repeat each experiment with 7 trials to average. The target function $\tau$ is Softplus.}   
\label{fig:gauss}  
\end{figure}
In Figure~\ref{fig:gauss}, we empirically verify the concentration bounds we derived in Theorems~\ref{thm:train_diff}, \ref{thm:cv_diff}, \ref{thm:test_diff} and \ref{thm:pkrr} using i.i.d. random data $\bX$, where each data point is sampled from $\text{Unif}(\mathbb{S}^{d-1})$ with $d=n = 500$. As the width $N$ increases, we observe that the differences for training errors, LOOCV, GCV, and generalization errors between RFFR and KRR are all convergent with a rate of at least $1/\sqrt{N}$. The activation function is a polynomial $p(x):=h_0(x)+\frac{1}{\sqrt{6}}h_1(x)+\frac{1}{3}h_2(x)+\frac{1}{6}h_3(x)+\frac{2}{3}h_4(x)+\frac{1}{2}h_5(x)$. For KRR, we utilize the polynomial KRR $\bK_\ell$ defined by \eqref{def:Phi0} with $\ell=2$ for an approximation of the original $\bK$. Additional simulations on the synthetic datasets are presented in Appendix~\ref{sec:add_sim}. 

\begin{figure}[ht]  
\centering
\begin{minipage}[t]{0.488\linewidth}
\centering 
{\includegraphics[width=0.99\textwidth]{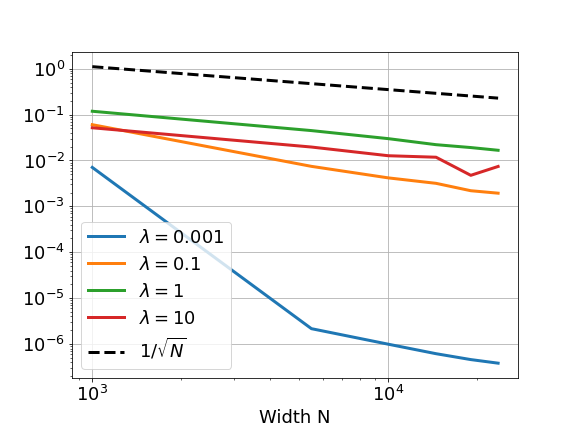}} \\
{\small (a) Training error}
\end{minipage} 
\begin{minipage}[t]{0.488\linewidth}
\centering
{\includegraphics[width=0.99\textwidth]{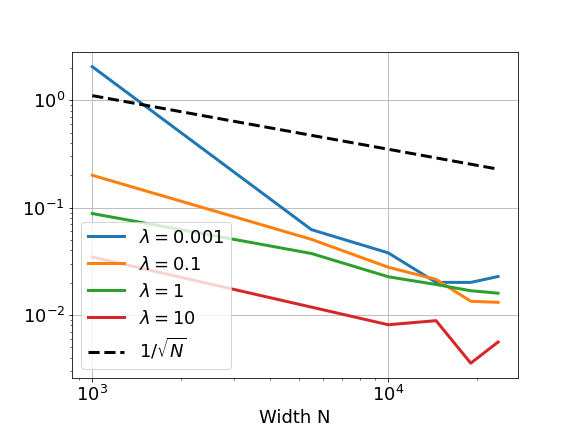}} \\
{\small (b) LOOCV error}  
\end{minipage} 
\begin{minipage}[t]{0.488\linewidth}
\centering 
{\includegraphics[width=0.99\textwidth]{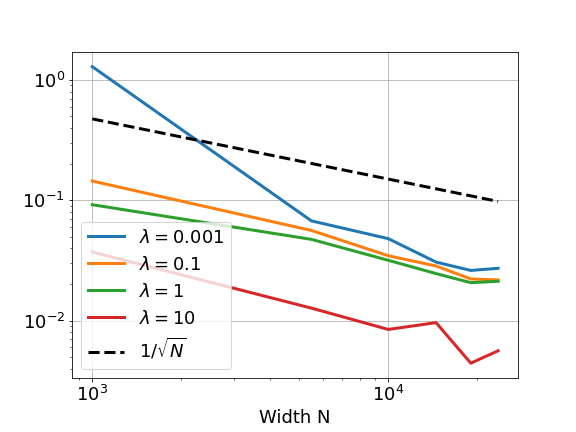}} \\
{\small (c) GCV error}
\end{minipage} 
\begin{minipage}[t]{0.488\linewidth}
\centering
{\includegraphics[width=0.99\textwidth]{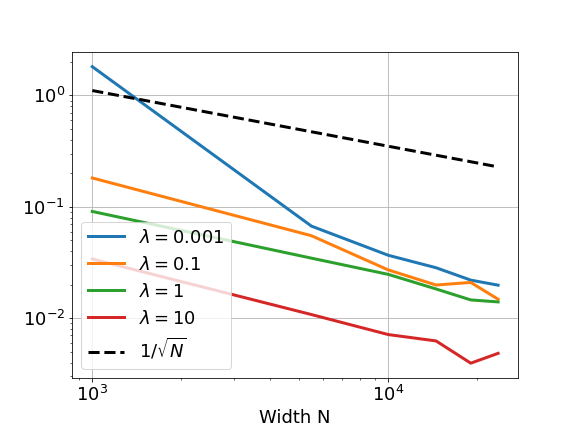}} \\
{\small (d) Test error}  
\end{minipage} 
\vspace{-0.15cm}
\caption{\small
Differences between KRR and RFRR with various ridge parameters $\lambda$ for (a) training errors, (b) LOOCV errors, (c) GCV errors, and (d) generalization errors. Data points in $\bX$ are randomly selected from CIFAR-10 with $d=800$, $n = 1000$ training samples, and without label noise. We repeat each experiment with 5 trials. The target function $\tau$ is the ReLU function.}   
\label{fig:cifar}  
\end{figure}
Analogously, we investigate the concentrations between RFRR and KRR on real-world data in Figure~\ref{fig:cifar}. We randomly select $d=800$ features for each data vector and $n=1000$ data points in the CIFAR-10 dataset. After normalizing the data points, we compare the performances of RFRR and KRR induced by the activation function $p(x)$. We observe that our theoretical concentration bound $1/\sqrt{N}$ derived from Section~\ref{sec:mainresults} is almost optimal in Figure~\ref{fig:cifar}. We expect to further explore which real-world datasets will empirically satisfy the $\ell$-orthonormal property defined in Assumption~\ref{assump:asymptotic_l} as a future direction.

\section{Conclusion} 
In this paper, we studied the behavior of random feature ridge regression in the overparameterized regime $(N\gg n)$ with a deterministic dataset under an $\ell$-orthonormal assumption. In our analysis, we proposed refined matrix concentration inequalities with relaxed assumptions and a convenient Hermite polynomial expansion of the nonlinear activation function. These approaches allow us to go beyond the linear regime \cite{wang2021deformed},
leading us to study any polynomial kernel approximation of RFRR and obtain new results for  general deterministic datasets. 

Our analysis has highlighted the impact of the degree of orthogonality among different input data points on the performance of RFRR in terms of training and generalization errors and cross-validation.
In addition, Hermite polynomial expansion of $\sigma$ is a universal way to precisely analyze RFRR induced by any two-layer neural networks with Gaussian random weights. As one-dimensional polynomials, they are easier to implement in practice compared to other orthogonal polynomial expansion approaches \cite{misiakiewicz2022spectrum,hu2022sharp,xiao2022precise,ghorbani2021linearized,mei2021generalization} that depend on both data and weight distributions for RFRR. We anticipate that our approach can also be applied to analyze other random kernel matrices, including the empirical NTK, from more general multi-layer neural networks with general deterministic datasets.

\subsubsection*{Acknowledgements}
Z.W. is partially supported by NSF DMS-2055340 and NSF DMS-2154099. Y.Z. is partially supported by  NSF-Simons Research Collaborations on the Mathematical and Scientific
Foundations of Deep Learning.  
This work was done in part while both authors were visiting the Simons Institute for the Theory of Computing during the summer of 2022.
 Z.W. would like to thank Denny Wu for his valuable suggestions and comments. Both authors thank Konstantin Donhauser and Yiqiao Zhong for their helpful discussions.

\bibliographystyle{alpha} 
\bibliography{ref.bib}

\newcommand{\etalchar}[1]{$^{#1}$}
\begin{thebibliography}{MCDVR22}

\bibitem[AC10]{arlot2010survey}
Sylvain Arlot and Alain Celisse.
\newblock A survey of cross-validation procedures for model selection.
\newblock {\em Statistics surveys}, 4:40--79, 2010.

\bibitem[ADH{\etalchar{+}}19]{arora2019exact}
Sanjeev Arora, Simon~S Du, Wei Hu, Zhiyuan Li, Ruslan Salakhutdinov, and
  Ruosong Wang.
\newblock On exact computation with an infinitely wide neural net.
\newblock In {\em Proceedings of the 33rd International Conference on Neural
  Information Processing Systems}, pages 8141--8150, 2019.

\bibitem[AKM{\etalchar{+}}17]{avron2017random}
Haim Avron, Michael Kapralov, Cameron Musco, Christopher Musco, Ameya
  Velingker, and Amir Zandieh.
\newblock Random fourier features for kernel ridge regression: Approximation
  bounds and statistical guarantees.
\newblock In {\em International Conference on Machine Learning}, pages
  253--262. PMLR, 2017.

\bibitem[AKT19]{ali2019continuous}
Alnur Ali, J~Zico Kolter, and Ryan~J Tibshirani.
\newblock A continuous-time view of early stopping for least squares
  regression.
\newblock In {\em The 22nd international conference on artificial intelligence
  and statistics}, pages 1370--1378. PMLR, 2019.

\bibitem[ALC21]{ali2021random}
Hafiz~Tiomoko Ali, Zhenyu Liao, and Romain Couillet.
\newblock Random matrices in service of ml footprint: ternary random features
  with no performance loss.
\newblock In {\em International Conference on Learning Representations}, 2021.

\bibitem[AP20]{adlam2020neural}
Ben Adlam and Jeffrey Pennington.
\newblock The neural tangent kernel in high dimensions: Triple descent and a
  multi-scale theory of generalization.
\newblock In {\em International Conference on Machine Learning}, pages 74--84.
  PMLR, 2020.

\bibitem[Bac13]{bach2013sharp}
Francis Bach.
\newblock Sharp analysis of low-rank kernel matrix approximations.
\newblock In {\em Conference on Learning Theory}, pages 185--209. PMLR, 2013.

\bibitem[Bac17]{bach2017equivalence}
Francis Bach.
\newblock On the equivalence between kernel quadrature rules and random feature
  expansions.
\newblock {\em The Journal of Machine Learning Research}, 18(1):714--751, 2017.

\bibitem[BES{\etalchar{+}}22]{ba2022high}
Jimmy Ba, Murat~A Erdogdu, Taiji Suzuki, Zhichao Wang, Denny Wu, and Greg Yang.
\newblock High-dimensional asymptotics of feature learning: How one gradient
  step improves the representation.
\newblock {\em arXiv preprint arXiv:2205.01445}, 2022.

\bibitem[BGK17]{benaych2016lectures}
Florent Benaych-Georges and Antti Knowles.
\newblock Lectures on the local semicircle law for wigner matrices.
\newblock In {\em Advanced topics in random matrices}, pages 1--90. Panor.
  Synthèses, 53, Soc. Math. France, Paris, 2017.

\bibitem[BMR21]{bartlett2021deep}
Peter~L Bartlett, Andrea Montanari, and Alexander Rakhlin.
\newblock Deep learning: a statistical viewpoint.
\newblock {\em Acta numerica}, 30:87--201, 2021.

\bibitem[BP21]{benigni2019eigenvalue}
Lucas Benigni and Sandrine P{\'e}ch{\'e}.
\newblock Eigenvalue distribution of some nonlinear models of random matrices.
\newblock {\em Electronic Journal of Probability}, 26:1--37, 2021.

\bibitem[BP22]{benigni2022largest}
Lucas Benigni and Sandrine P{\'e}ch{\'e}.
\newblock Largest eigenvalues of the conjugate kernel of single-layered neural
  networks.
\newblock {\em arXiv preprint arXiv:2201.04753}, 2022.

\bibitem[BS10]{bai2010spectral}
Zhidong Bai and Jack~W Silverstein.
\newblock {\em Spectral analysis of large dimensional random matrices},
  volume~20.
\newblock Springer, 2010.

\bibitem[Cho22]{chouard2022quantitative}
Cl{\'e}ment Chouard.
\newblock Quantitative deterministic equivalent of sample covariance matrices
  with a general dependence structure.
\newblock {\em arXiv preprint arXiv:2211.13044}, 2022.

\bibitem[CRW04]{casella2004generalized}
George Casella, Christian~P Robert, and Martin~T Wells.
\newblock Generalized accept-reject sampling schemes.
\newblock {\em Lecture Notes-Monograph Series}, pages 342--347, 2004.

\bibitem[CS09]{cho2009kernel}
Youngmin Cho and Lawrence~K Saul.
\newblock Kernel methods for deep learning.
\newblock In {\em Advances in Neural Information Processing Systems}, pages
  342--350, 2009.

\bibitem[CSW22]{Chen2022}
Zhijun Chen, Hayden Schaeffer, and Rachel Ward.
\newblock Concentration of random feature matrices in high-dimensions.
\newblock In {\em Proceedings of Mathematical and Scientific Machine Learning},
  volume 190 of {\em Proceedings of Machine Learning Research}, pages 287--302.
  PMLR, 15--17 Aug 2022.

\bibitem[DFS16]{daniely2016toward}
Amit Daniely, Roy Frostig, and Yoram Singer.
\newblock Toward deeper understanding of neural networks: The power of
  initialization and a dual view on expressivity.
\newblock In {\em Advances In Neural Information Processing Systems}, pages
  2253--2261, 2016.

\bibitem[DL20]{dhifallah2020precise}
Oussama Dhifallah and Yue~M Lu.
\newblock A precise performance analysis of learning with random features.
\newblock {\em arXiv preprint arXiv:2008.11904}, 2020.

\bibitem[DLS22]{damian2022neural}
Alexandru Damian, Jason Lee, and Mahdi Soltanolkotabi.
\newblock Neural networks can learn representations with gradient descent.
\newblock In {\em Conference on Learning Theory}, pages 5413--5452. PMLR, 2022.

\bibitem[DWY21]{donhauser2021rotational}
Konstantin Donhauser, Mingqi Wu, and Fanny Yang.
\newblock How rotational invariance of common kernels prevents generalization
  in high dimensions.
\newblock In {\em International Conference on Machine Learning}, pages
  2804--2814. PMLR, 2021.

\bibitem[DZPS19]{du2019gradienta}
Simon~S Du, Xiyu Zhai, Barnabas Poczos, and Aarti Singh.
\newblock Gradient descent provably optimizes over-parameterized neural
  networks.
\newblock In {\em International Conference on Learning Representations}, 2019.

\bibitem[EK10]{el2010spectrum}
Noureddine El~Karoui.
\newblock The spectrum of kernel random matrices.
\newblock {\em Annals of statistics}, 38(1):1--50, 2010.

\bibitem[FVB{\etalchar{+}}22]{frei2022implicit}
Spencer Frei, Gal Vardi, Peter~L Bartlett, Nathan Srebro, and Wei Hu.
\newblock Implicit bias in leaky relu networks trained on high-dimensional
  data.
\newblock {\em arXiv preprint arXiv:2210.07082}, 2022.

\bibitem[FW20]{fan2020spectra}
Zhou Fan and Zhichao Wang.
\newblock Spectra of the conjugate kernel and neural tangent kernel for
  linear-width neural networks.
\newblock In {\em Advances in Neural Information Processing Systems},
  volume~33, pages 7710--7721. Curran Associates, Inc., 2020.

\bibitem[GKL{\etalchar{+}}22]{gerace2022gaussian}
Federica Gerace, Florent Krzakala, Bruno Loureiro, Ludovic Stephan, and Lenka
  Zdeborov{\'a}.
\newblock Gaussian universality of linear classifiers with random labels in
  high-dimension.
\newblock {\em arXiv preprint arXiv:2205.13303}, 2022.

\bibitem[GLK{\etalchar{+}}20]{gerace2020generalisation}
Federica Gerace, Bruno Loureiro, Florent Krzakala, Marc M{\'e}zard, and Lenka
  Zdeborov{\'a}.
\newblock Generalisation error in learning with random features and the hidden
  manifold model.
\newblock In {\em International Conference on Machine Learning}, pages
  3452--3462. PMLR, 2020.

\bibitem[GLR{\etalchar{+}}22]{goldt2020gaussian}
Sebastian Goldt, Bruno Loureiro, Galen Reeves, Florent Krzakala, Marc
  M{\'e}zard, and Lenka Zdeborov{\'a}.
\newblock The gaussian equivalence of generative models for learning with
  shallow neural networks.
\newblock In {\em Mathematical and Scientific Machine Learning}, pages
  426--471. PMLR, 2022.

\bibitem[GMKZ20]{goldt2020modeling}
Sebastian Goldt, Marc M{\'e}zard, Florent Krzakala, and Lenka Zdeborov{\'a}.
\newblock Modeling the influence of data structure on learning in neural
  networks: The hidden manifold model.
\newblock {\em Physical Review X}, 10(4):041044, 2020.

\bibitem[GMMM21]{ghorbani2021linearized}
Behrooz Ghorbani, Song Mei, Theodor Misiakiewicz, and Andrea Montanari.
\newblock Linearized two-layers neural networks in high dimension.
\newblock {\em The Annals of Statistics}, 49(2):1029--1054, 2021.

\bibitem[HJ12]{horn2012matrix}
Roger~A Horn and Charles~R Johnson.
\newblock {\em Matrix analysis}.
\newblock Cambridge university press, 2012.

\bibitem[HKZ12]{hsu2012random}
Daniel Hsu, Sham~M Kakade, and Tong Zhang.
\newblock Random design analysis of ridge regression.
\newblock In {\em Conference on learning theory}, pages 9--1. JMLR Workshop and
  Conference Proceedings, 2012.

\bibitem[HL22a]{hu2022sharp}
Hong Hu and Yue~M Lu.
\newblock Sharp asymptotics of kernel ridge regression beyond the linear
  regime.
\newblock {\em arXiv preprint arXiv:2205.06798}, 2022.

\bibitem[HL22b]{hu2020universality}
Hong Hu and Yue~M Lu.
\newblock Universality laws for high-dimensional learning with random features.
\newblock {\em IEEE Transactions on Information Theory}, 2022.

\bibitem[HMRT22]{hastie@2022surprises}
Trevor Hastie, Andrea Montanari, Saharon Rosset, and Ryan~J Tibshirani.
\newblock Surprises in high-dimensional ridgeless least squares interpolation.
\newblock {\em The Annals of Statistics}, 50(2):949--986, 2022.

\bibitem[HTF09]{hastie2009elements}
Trevor Hastie, Robert Tibshirani, and Jerome Friedman.
\newblock {\em The elements of statistical learning: data mining, inference,
  and prediction}, volume~2.
\newblock Springer, 2009.

\bibitem[HXAP20]{hu2020surprising}
Wei Hu, Lechao Xiao, Ben Adlam, and Jeffrey Pennington.
\newblock The surprising simplicity of the early-time learning dynamics of
  neural networks.
\newblock In {\em Advances in Neural Information Processing Systems},
  volume~33, pages 17116--17128. Curran Associates, Inc., 2020.

\bibitem[JGH18]{jacot2018neural}
Arthur Jacot, Franck Gabriel, and Cl{\'e}ment Hongler.
\newblock Neural tangent kernel: convergence and generalization in neural
  networks.
\newblock In {\em Proceedings of the 32nd International Conference on Neural
  Information Processing Systems}, pages 8580--8589, 2018.

\bibitem[JSS{\etalchar{+}}20a]{jacot2020implicit}
Arthur Jacot, Berfin Simsek, Francesco Spadaro, Cl{\'e}ment Hongler, and Franck
  Gabriel.
\newblock Implicit regularization of random feature models.
\newblock In {\em International Conference on Machine Learning}, pages
  4631--4640. PMLR, 2020.

\bibitem[JSS{\etalchar{+}}20b]{jacot2020kernel}
Arthur Jacot, Berfin Simsek, Francesco Spadaro, Cl{\'e}ment Hongler, and Franck
  Gabriel.
\newblock Kernel alignment risk estimator: Risk prediction from training data.
\newblock {\em Advances in Neural Information Processing Systems},
  33:15568--15578, 2020.

\bibitem[LBN{\etalchar{+}}18]{lee2018deep}
Jaehoon Lee, Yasaman Bahri, Roman Novak, Samuel~S Schoenholz, Jeffrey
  Pennington, and Jascha Sohl-Dickstein.
\newblock Deep neural networks as {G}aussian processes.
\newblock In {\em International Conference on Learning Representations}, 2018.

\bibitem[LC18]{liao2018spectrum}
Zhenyu Liao and Romain Couillet.
\newblock On the spectrum of random features maps of high dimensional data.
\newblock In {\em International Conference on Machine Learning}, pages
  3063--3071. PMLR, 2018.

\bibitem[LC19]{liao2019inner}
Zhenyu Liao and Romain Couillet.
\newblock On inner-product kernels of high dimensional data.
\newblock In {\em 2019 IEEE 8th International Workshop on Computational
  Advances in Multi-Sensor Adaptive Processing (CAMSAP)}, pages 579--583. IEEE,
  2019.

\bibitem[LCM20]{liao2020ARM}
Zhenyu Liao, Romain Couillet, and Michael~W. Mahoney.
\newblock A random matrix analysis of random fourier features: beyond the
  gaussian kernel, a precise phase transition, and the corresponding double
  descent.
\newblock In {\em 34th Conference on Neural Information Processing Systems},
  2020.

\bibitem[LD19]{liu2019ridge}
Sifan Liu and Edgar Dobriban.
\newblock Ridge regression: Structure, cross-validation, and sketching.
\newblock In {\em International Conference on Learning Representations}, 2019.

\bibitem[LD21]{lin2021causes}
Licong Lin and Edgar Dobriban.
\newblock What causes the test error? going beyond bias-variance via anova.
\newblock {\em Journal of Machine Learning Research}, 22(155):1--82, 2021.

\bibitem[LGC{\etalchar{+}}21]{loureiro2021learning}
Bruno Loureiro, Cedric Gerbelot, Hugo Cui, Sebastian Goldt, Florent Krzakala,
  Marc Mezard, and Lenka Zdeborov{\'a}.
\newblock Learning curves of generic features maps for realistic datasets with
  a teacher-student model.
\newblock {\em Advances in Neural Information Processing Systems},
  34:18137--18151, 2021.

\bibitem[LLC18]{louart2018random}
Cosme Louart, Zhenyu Liao, and Romain Couillet.
\newblock A random matrix approach to neural networks.
\newblock {\em The Annals of Applied Probability}, 28(2):1190--1248, 2018.

\bibitem[LLS21]{liu2021kernel}
Fanghui Liu, Zhenyu Liao, and Johan Suykens.
\newblock Kernel regression in high dimensions: Refined analysis beyond double
  descent.
\newblock In {\em International Conference on Artificial Intelligence and
  Statistics}, pages 649--657. PMLR, 2021.

\bibitem[LR20]{liang2020just}
Tengyuan Liang and Alexander Rakhlin.
\newblock Just interpolate: Kernel “ridgeless” regression can generalize.
\newblock {\em The Annals of Statistics}, 48(3):1329--1347, 2020.

\bibitem[LRZ20]{liang2020multiple}
Tengyuan Liang, Alexander Rakhlin, and Xiyu Zhai.
\newblock On the multiple descent of minimum-norm interpolants and restricted
  lower isometry of kernels.
\newblock In {\em Conference on Learning Theory}, pages 2683--2711. PMLR, 2020.

\bibitem[LY22]{lu2022equivalence}
Yue~M Lu and Horng-Tzer Yau.
\newblock An equivalence principle for the spectrum of random inner-product
  kernel matrices.
\newblock {\em arXiv preprint arXiv:2205.06308}, 2022.

\bibitem[MCDVR22]{meanti2022efficient}
Giacomo Meanti, Luigi Carratino, Ernesto De~Vito, and Lorenzo Rosasco.
\newblock Efficient hyperparameter tuning for large scale kernel ridge
  regression.
\newblock In {\em International Conference on Artificial Intelligence and
  Statistics}, pages 6554--6572. PMLR, 2022.

\bibitem[MHR{\etalchar{+}}18]{matthews2018gaussian}
Alexander G de~G Matthews, Jiri Hron, Mark Rowland, Richard~E Turner, and
  Zoubin Ghahramani.
\newblock Gaussian process behaviour in wide deep neural networks.
\newblock In {\em International Conference on Learning Representations}, 2018.

\bibitem[Mis22]{misiakiewicz2022spectrum}
Theodor Misiakiewicz.
\newblock Spectrum of inner-product kernel matrices in the polynomial regime
  and multiple descent phenomenon in kernel ridge regression.
\newblock {\em arXiv preprint arXiv:2204.10425}, 2022.

\bibitem[MJBM22]{murray2022characterizing}
Michael Murray, Hui Jin, Benjamin Bowman, and Guido Montufar.
\newblock Characterizing the spectrum of the {NTK} via a power series
  expansion.
\newblock {\em arXiv preprint arXiv:2211.07844}, 2022.

\bibitem[MM19]{mei2019generalization}
Song Mei and Andrea Montanari.
\newblock The generalization error of random features regression: Precise
  asymptotics and the double descent curve.
\newblock {\em Communications on Pure and Applied Mathematics}, 2019.

\bibitem[MM21]{miolane2021distribution}
L{\'e}o Miolane and Andrea Montanari.
\newblock The distribution of the lasso: Uniform control over sparse balls and
  adaptive parameter tuning.
\newblock {\em The Annals of Statistics}, 49(4):2313--2335, 2021.

\bibitem[MMM22]{mei2021generalization}
Song Mei, Theodor Misiakiewicz, and Andrea Montanari.
\newblock Generalization error of random feature and kernel methods:
  hypercontractivity and kernel matrix concentration.
\newblock {\em Applied and Computational Harmonic Analysis}, 59:3--84, 2022.

\bibitem[MZ22]{montanari2020interpolation}
Andrea Montanari and Yiqiao Zhong.
\newblock The interpolation phase transition in neural networks: Memorization
  and generalization under lazy training.
\newblock {\em The Annals of Statistics}, 50(5):2816--2847, 2022.

\bibitem[NM20]{nguyen2020global}
Quynh Nguyen and Marco Mondelli.
\newblock Global convergence of deep networks with one wide layer followed by
  pyramidal topology.
\newblock In {\em 34th Conference on Neural Information Processing Systems},
  volume~33, 2020.

\bibitem[OS20]{oymak2020toward}
Samet Oymak and Mahdi Soltanolkotabi.
\newblock Toward moderate overparameterization: Global convergence guarantees
  for training shallow neural networks.
\newblock {\em IEEE Journal on Selected Areas in Information Theory},
  1(1):84--105, 2020.

\bibitem[PLR{\etalchar{+}}16]{poole2016exponential}
Ben Poole, Subhaneil Lahiri, Maithra Raghu, Jascha Sohl-Dickstein, and Surya
  Ganguli.
\newblock Exponential expressivity in deep neural networks through transient
  chaos.
\newblock In {\em Advances in Neural Information Processing Systems}, pages
  3360--3368, 2016.

\bibitem[PRT22]{patil2022estimating}
Pratik Patil, Alessandro Rinaldo, and Ryan Tibshirani.
\newblock Estimating functionals of the out-of-sample error distribution in
  high-dimensional ridge regression.
\newblock In {\em International Conference on Artificial Intelligence and
  Statistics}, pages 6087--6120. PMLR, 2022.

\bibitem[PW17]{pennington2017nonlinear}
Jeffrey Pennington and Pratik Worah.
\newblock Nonlinear random matrix theory for deep learning.
\newblock In {\em Advances in Neural Information Processing Systems},
  volume~30. Curran Associates, Inc., 2017.

\bibitem[RR07]{rahimi2007random}
Ali Rahimi and Benjamin Recht.
\newblock Random features for large-scale kernel machines.
\newblock In {\em Proceedings of the 20th International Conference on Neural
  Information Processing Systems}, pages 1177--1184, 2007.

\bibitem[RR08]{rahimi2008weighted}
Ali Rahimi and Benjamin Recht.
\newblock Weighted sums of random kitchen sinks: Replacing minimization with
  randomization in learning.
\newblock {\em Advances in neural information processing systems}, 21, 2008.

\bibitem[RR17]{rudi2016generalization}
Alessandro Rudi and Lorenzo Rosasco.
\newblock Generalization properties of learning with random features.
\newblock In {\em Advances in Neural Information Processing Systems},
  volume~30. Curran Associates, Inc., 2017.

\bibitem[SAEP{\etalchar{+}}22]{sahraee2022kernel}
Mojtaba Sahraee-Ardakan, Melikasadat Emami, Parthe Pandit, Sundeep Rangan, and
  Alyson~K Fletcher.
\newblock Kernel methods and multi-layer perceptrons learn linear models in
  high dimensions.
\newblock {\em arXiv preprint arXiv:2201.08082}, 2022.

\bibitem[SGGSD17]{schoenholz2017deep}
Samuel~S Schoenholz, Justin Gilmer, Surya Ganguli, and Jascha Sohl-Dickstein.
\newblock Deep information propagation.
\newblock In {\em International Conference on Learning Representations}, 2017.

\bibitem[SLTC20]{seddik2020random}
Mohamed El~Amine Seddik, Cosme Louart, Mohamed Tamaazousti, and Romain
  Couillet.
\newblock Random matrix theory proves that deep learning representations of
  gan-data behave as gaussian mixtures.
\newblock In {\em International Conference on Machine Learning}, pages
  8573--8582. PMLR, 2020.

\bibitem[Ver18]{vershynin2018high}
Roman Vershynin.
\newblock {\em High-dimensional probability: An introduction with applications
  in data science}, volume~47.
\newblock Cambridge university press, 2018.

\bibitem[WHS22]{wei2022more}
Alexander Wei, Wei Hu, and Jacob Steinhardt.
\newblock More than a toy: Random matrix models predict how real-world neural
  representations generalize.
\newblock In {\em Proceedings of the 39th International Conference on Machine
  Learning}, volume 162 of {\em Proceedings of Machine Learning Research},
  pages 23549--23588. PMLR, 17--23 Jul 2022.

\bibitem[WZ21]{wang2021deformed}
Zhichao Wang and Yizhe Zhu.
\newblock Deformed semicircle law and concentration of nonlinear random
  matrices for ultra-wide neural networks.
\newblock {\em arXiv preprint arXiv:2109.09304}, 2021.

\bibitem[XMRH21]{xu2021consistent}
Ji~Xu, Arian Maleki, Kamiar~Rahnama Rad, and Daniel Hsu.
\newblock Consistent risk estimation in moderately high-dimensional linear
  regression.
\newblock {\em IEEE Transactions on Information Theory}, 67(9):5997--6030,
  2021.

\bibitem[XP22]{xiao2022precise}
Lechao Xiao and Jeffrey Pennington.
\newblock Precise learning curves and higher-order scaling limits for dot
  product kernel regression.
\newblock {\em arXiv preprint arXiv:2205.14846}, 2022.

\end{thebibliography}

\appendix

\section{Addtional Simulations}\label{sec:add_sim}
As a complementary, Figure~\ref{fig:gauss1} shows the convergence rates for the differences in training errors, LOOCV errors, GCV errors, and generalization errors between RFRR and KRR. In this experiment, the data points are i.i.d.\ sampled from $\text{Unif}(\mathbb{S}^{d-1})$ with $d = 500$ and training samples $n=1000$. The activation function is a degree-5 polynomial $p(x)=h_0(x)+\frac{1}{\sqrt{6}}h_1(x)+\frac{1}{3}h_2(x)+\frac{1}{6}h_3(x)+\frac{2}{3}h_4(x)+\frac{1}{2}h_5(x)$, where Hermite polynomials are defined in Definition~\ref{eq:hermite}. As an approximation of the kernel $\bK$ generated by $\sigma(x)=p(x)$, we can consider $\bK_2 $ defined by 
\[\bK_2=\mathbf{1}\mathbf{1}^\top+\frac{1}{6}\bX^\top\bX+\frac{1}{9}(\bX^\top\bX)^{\odot 2}+\frac{26}{36}\Id.\]
We employ this simple kernel $\bK_2 $ to compute the performances of KRR and compare them with the performances of RFRR generated by $\sigma$ and \eqref{eq:K_N(X)}. In this simulation, we consider a teacher model defined by Assumption~\ref{assump:target} where $\tau$ is the Softplus function. Similarly with Figures~\ref{fig:gauss} and \ref{fig:cifar}, these results of the simulation match with our theorems in Section~\ref{sec:mainresults}.
\begin{figure}[ht]  
\centering
\begin{minipage}[t]{0.488\linewidth}
\centering 
{\includegraphics[width=0.99\textwidth]{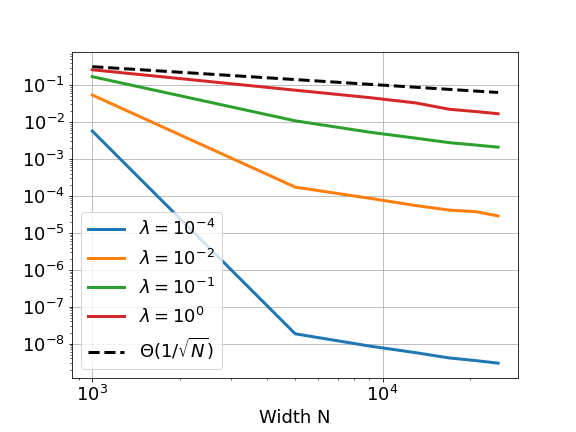}} \\
{\small (a) Training error}
\end{minipage} 
\begin{minipage}[t]{0.488\linewidth}
\centering
{\includegraphics[width=0.99\textwidth]{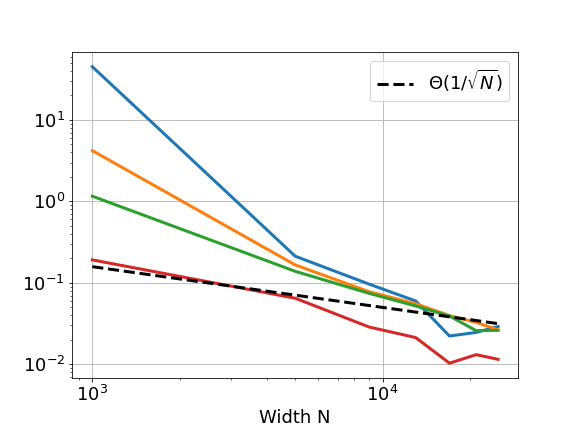}} \\
{\small (b) LOOCV error}  
\end{minipage} 
\begin{minipage}[t]{0.488\linewidth}
\centering 
{\includegraphics[width=0.99\textwidth]{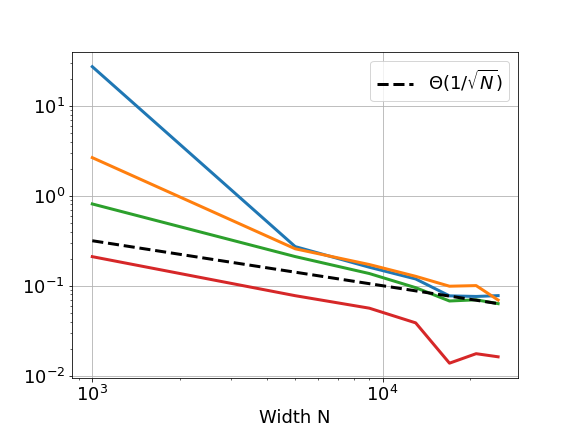}} \\
{\small (c) GCV error}
\end{minipage} 
\begin{minipage}[t]{0.488\linewidth}
\centering
{\includegraphics[width=0.99\textwidth]{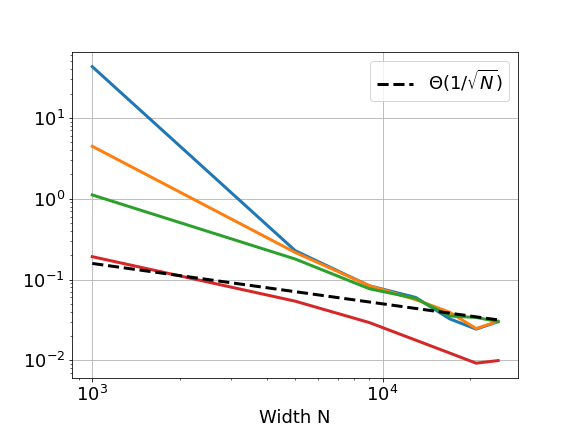}} \\
{\small (d) Test error}  
\end{minipage} 
\caption{\small
Differences between KRR and RFRR with various ridge parameters $\lambda$ in terms of (a) training errors, (b) LOOCV errors, (c) GCV errors, and (d) generalization errors. Here, data $\bX$ is sampled from $\text{Unif}(\mathbb{S}^{d-1})$ with $d = 500$, $n=1000$ and training label noise $\sigma_\eps = 0.3$. We repeat each experiment with 5 trials to average. The target function $\tau$ is Softplus function.}   
\label{fig:gauss1}  
\end{figure}
\section{Additional Notations and Definitions}
We denote $\Id$ as the identity matrix. Let $\bK_\lambda=\bK+\lambda\Id$ where $\bK$ is defined by \eqref{def:phi} and $\lambda\ge 0$ is the ridge parameter. Denote $\bK_{N,\lambda}=\bK_N+\lambda\Id$ where $\bK_N$ is \eqref{eq:K_N(X)}. Conventionally, let $\norm{\cdot}$ be the $\ell_2$-norm for vectors and $\ell_2\to\ell_2$ operator norm for matrices. Let $\preccurlyeq$ be the Loewner order for positive semi-definite matrices. For any matrix $\bA\in\R_{n\times n}$, $[\bA]_{i,j}$ denotes the $(i,j)$ entry of $\bA$, and $[\bA]_{[i,:]}$ denotes the $i$-th row of $\bA$ for any $i,j\in[n]$. Recall that the constant $\lambda_0=\frac{1}{2}\sigma_{>\ell}^2>0$. In the following proofs in Appendix~\ref{sec:proofs}, all the constants are universal and do not depend on $n,d,$ and $N$.

The following normalized Hermite polynomials are necessary for expanding $\sigma$ and approximating  $\bK$ by a polynomial kernel in Section~\ref{sec:approx_kernel} under Gaussian distributions.
\begin{definition}[Normalized Hermite polynomial]\label{eq:hermitepolynomial}
The $k$-th normalized Hermite polynomial is given by 
\begin{equation}\label{eq:hermite}
    h_k(x)=\frac{1}{\sqrt {k!}} (-1)^k e^{x^2/2} \frac{d^k}{dx^k} e^{-x^2/2}.
\end{equation}
These polynomials $\{h_k\}_{k=0}^{\infty}$ form an orthogonal basis of $L^2(\mathbb R, \Gamma)$, where $\Gamma$ denotes the standard Gaussian distribution. For any $\sigma_1,\sigma_2\in L^2(\mathbb R, \Gamma)$,  the inner product, with respect to the standard Gaussian measure, is defined by 
\[
    \langle \sigma_1,\sigma_2\rangle_{\Gamma} =\int_{-\infty}^{\infty} \sigma_1(x)\sigma_2(x) \frac{e^{-x^2/2}}{\sqrt{2\pi}}dx.
\]
\end{definition}  

Based on the definition, every function $\sigma \in L^2(\mathbb R, \Gamma)$ can be expanded as  $    \sigma(x)=\sum_{k=0}^{\infty}\zeta_k(\sigma)h_k(x)$,
where $\zeta_k(\sigma)$ is the $k$-th Hermite coefficient given by 
\begin{align}
    \zeta_k(\sigma)=\int_{-\infty}^{\infty} \sigma(x)h_k(x)\frac{e^{-x^2/2}}{\sqrt{2\pi}}dx, \label{eq:hermite_coeff}
\end{align}
and $ \|\sigma\|_2^2=\sum_{k=0}^{\infty} \zeta_k^2(\sigma)$. Moreover, we have   $ \langle h_k,h_j\rangle_{\Gamma}=\E[h_k(\xi)h_j(\xi)]=\delta_{j,k}$ for any $\xi\sim\cN(0,1)$ and $k,j\in\N$. For more properties of Hermite polynomials, see \cite{oymak2020toward,nguyen2020global}.

\section{Proofs of Main Results in Section~\ref{sec:mainresults}}\label{sec:proofs}
\subsection{Proof of Proposition \ref{prop:pkrr}}

By the Hermite polynomial expansion of $\sigma$, for $i,j\in [n]$, we have 
\[ \bK_{ij}=\E_{\w}[\sigma(\w_i^\top \bx_i)\sigma(\w_i^\top \bx_j)]=\sum_{k=0}^{\infty} \xi_k^2(\sigma) \langle \bx_i, \bx_j\rangle^k.
\]
Thus, we can expand this kernel as
\begin{align}
    \bK=\sum_{k=0}^{\infty} \xi_k^2(\sigma)\left(\bX^\top \bX\right)^{\odot k}, \quad \bK-\bK_{\ell}=\sum_{k=\ell+1}^{\infty } \xi_k^2(\sigma)\left(\left(\bX^\top \bX\right)^{\odot k}-\Id\right). \notag
\end{align}
Then by Cauchy's inequality, for $n\geq n_0$,
\begin{align*}
    \|\bK-\bK_{\ell}\|^2&\leq  \|\bK-\bK_{\ell}\|_F^2= \sum_{i\not= j} \left( \sum_{k=\ell+1}^{\infty} \xi_k^2(\sigma) \langle \bx_i,\bx_j\rangle^k\right)^2 \\
    &\leq \sum_{i\not=j} \left(\sum_{k=\ell+1}^{\infty} \xi_k^4(\sigma)\right) \left(\sum_{k=\ell+1}^{\infty}\langle \bx_i,\bx_j\rangle^{2k} \right)\\
    &\leq \|\sigma\|_4^4 \sum_{i\not=j} \frac{\langle \bx_i,\bx_j\rangle^{2\ell+2}}{1-\max |\langle \bx_i,\bx_j\rangle |^2}\leq 2\|\sigma\|_4^4 \left\| \left(\bX^\top \bX\right)^{\odot \ell+1}-\Id\right\|_F^2.
\end{align*}
Therefore,  from \eqref{assump:relaxed_l},
\begin{align*}
   \|\bK-\bK_{\ell}\|\leq   \sqrt{2}\|\sigma\|_4^2\left\| \left(\bX^\top \bX\right)^{\odot \ell+1}-\Id\right\|_F\leq \frac{1}{2\sqrt{2}} \sigma_{>\ell}^2.
\end{align*}

Since \[\left(\bX^\top \bX\right)^{\odot k}_{ij}= \langle \bx_i,\bx_j\rangle^k=\langle \bx_i\otimes \cdots \otimes \bx_i, \bx_j\otimes \cdots \otimes \bx_j\rangle, \]
 where $\x_i\otimes \cdots \otimes \x_i$ is the $k$-th tensor product of $\x_i$, $\left(\bX^\top \bX\right)^{\odot k}$ is positive semidefinite.  Then 
 \begin{align}
     \lambda_{\min}(\bK)\geq \lambda_{\min}(\bK_{\ell})-\|\bK-\bK_{\ell}\|
    &\geq  \sigma_{>\ell}^2 -  \frac{1}{2\sqrt{2}} \sigma_{>\ell}^2>  \frac{1}{2} \sigma_{>\ell}^2\equiv \lambda_0, \notag
 \end{align}
  Notice that $\lambda_0>0$ because $\sigma^2_{>\ell}>0$ from Assumption~\ref{assump:asymptotics}.

\subsection{Concentration Inequality for Normalized Random Kernel Matrices} \label{sec:proof_310}
Now we introduce the concentration inequality for $\bK_N$ in a normalized version, which is the cornerstone for proving Theorem~\ref{thm:train_diff}. 
Similar concentration results were also obtained in Theorem 3.2 of \cite{montanari2020interpolation} for the neural tangent kernel (NTK), where the data matrix $\bX$ is assumed to be uniformly random, and the activation function is assumed to have a polynomial growth rate, while we make no distribution assumption on $\bX$ and only assume $\|\sigma\|_4$ is finite. To consider a normalized version of the kernel matrices, we need to consider $\bK_{\lambda}^{-1}$. Under Assumption \ref{assump:asymptotics}, we use Proposition \ref{prop:pkrr} to make sure $\bK_{\lambda}$ is invertible when $\lambda=0$.  
 
\begin{prop}[Normalized random kernel matrix concentration]\label{prop:kernel_con}
Suppose that Assumptions \ref{assump:W}, \ref{assump:sigma}, and \ref{assump:asymptotics} hold. There exists some positive constants $C_1, C_2>0$ depending only on $\sigma$, such that for any  $N$ satisfying $N/\log^{2C_{\sigma}}(N)> C_1 n$  and any $\lambda\ge 0, n\geq n_0$, we have 
\begin{equation}\label{eq:kernel_con}
\norm{\bK_\lambda^{-\frac{1}{2}}\left(\bK_N-\bK\right)\bK_\lambda^{-\frac{1}{2}} } \leq  C_2 \log^{C_{\sigma}}(N)\sqrt{\frac{n}{N}}
\end{equation}
with probability at least $1-N^{-2}$, where $\bK_{\lambda}=\bK +\lambda \Id$.
\end{prop}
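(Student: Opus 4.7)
The plan is to reduce the statement to a standard sample covariance concentration and apply a matrix Bernstein inequality, with a truncation argument to handle the unboundedness of $\sigma$. Recall that the rows of $\bPhi$ are i.i.d., so set $\tilde{\bphi}_i := \bK_\lambda^{-1/2}\bphi_i$, where $\bphi_i^\top = \sigma(\bw_i^\top \bX)$. Under Assumptions \ref{assump:sigma} and \ref{assump:asymptotics}, Proposition \ref{prop:pkrr} gives $\lambda_{\min}(\bK)\geq \lambda_0 > 0$ for $n\geq n_0$, so $\bK_\lambda^{-1/2}$ is well-defined with $\|\bK_\lambda^{-1/2}\|\leq \lambda_0^{-1/2}$ uniformly in $\lambda\geq 0$. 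The target quantity can then be written as an i.i.d.\ centered sum,
\begin{equation*}
\bK_\lambda^{-1/2}(\bK_N-\bK)\bK_\lambda^{-1/2} = \frac{1}{N}\sum_{i=1}^N \bZ_i, \qquad \bZ_i := \tilde{\bphi}_i\tilde{\bphi}_i^\top - \tilde{\bK},
\end{equation*}
where $\tilde{\bK} := \bK_\lambda^{-1/2}\bK\bK_\lambda^{-1/2}\preceq \Id$.

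The next step is to establish a uniform high-probability bound $\|\tilde{\bphi}_i\|^2\leq M$. From the polynomial growth in Assumption \ref{assump:sigma} and $\|\bx_j\|=1$,
\begin{equation*}
\|\tilde{\bphi}_i\|^2 \leq \lambda_0^{-1}\|\bphi_i\|^2 \leq \lambda_0^{-1}C_\sigma^2 \sum_{j=1}^n \bigl(1+|\bw_i^\top \bx_j|\bigr)^{2C_\sigma}.
\end{equation*}
Since $\bw_i^\top\bx_j\sim \cN(0,1)$, a scalar Gaussian tail bound combined with a union bound over $(i,j)\in [N]\times[n]$ gives $\max_{i,j}|\bw_i^\top\bx_j|\leq C\sqrt{\log N}$ with probability at least $1 - N^{-3}$. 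On this event, one takes $M = C_3 n\log^{C_\sigma}(N)$ for some $C_3 = C_3(\sigma)$.

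With the truncation event fixed, I apply matrix Bernstein to $\sum_i \bZ_i$. On the event, $\|\bZ_i\|\leq M+1$, and from $(\tilde{\bphi}_i\tilde{\bphi}_i^\top)^2 = \|\tilde{\bphi}_i\|^2 \tilde{\bphi}_i\tilde{\bphi}_i^\top \preceq M\tilde{\bphi}_i\tilde{\bphi}_i^\top$ one gets
\begin{equation*}
\Bigl\|\sum_{i=1}^N \E \bZ_i^2\Bigr\| = \Bigl\|\sum_{i=1}^N \bigl(\E[\|\tilde{\bphi}_i\|^2 \tilde{\bphi}_i\tilde{\bphi}_i^\top] - \tilde{\bK}^2\bigr)\Bigr\| \leq N M \|\tilde{\bK}\| \leq NM.
\end{equation*}
Matrix Bernstein then yields, for $t = C_2\log^{C_\sigma}(N)\sqrt{n/N}$ with $C_2$ large enough, a deviation probability at most $2n\exp(-cN t^2/M)\leq n\cdot N^{-5}$; the hypothesis $N/\log^{2C_\sigma}(N)>C_1 n$ is what places us in the variance-dominated regime of Bernstein (i.e., $Mt/N\ll M$) so that the square-root scaling in \eqref{eq:kernel_con} is achieved. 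A union bound with the Gaussian truncation event then yields the advertised probability $1-N^{-2}$.

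The main technical obstacle is the joint high-probability norm control of all $N$ feature vectors on a single event: because $\sigma$ is merely polynomially bounded, Gaussian concentration cannot be applied directly to $\|\bphi_i\|$. The workaround above reduces the problem to an $\ell_\infty$ bound on the scalars $\bw_i^\top\bx_j$, which is accessible via Gaussian tails precisely because $\|\bx_j\|=1$. A secondary subtlety is uniformity in $\lambda\geq 0$, which is free: $\bK_\lambda^{-1/2}\preceq \bK^{-1/2}\preceq \lambda_0^{-1/2}\Id$, so all the constants can be taken from the worst case $\lambda=0$, where Proposition \ref{prop:pkrr} furnishes the crucial spectral lower bound $\lambda_0$.
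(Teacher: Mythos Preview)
Your high-level strategy (truncate to obtain bounded summands, then apply matrix Bernstein) is exactly the paper's, and your observation that $\lambda_{\min}(\bK)\ge\lambda_0$ from Proposition~\ref{prop:pkrr} makes all bounds uniform in $\lambda\ge 0$ is correct. However, the truncation step as you wrote it has a genuine gap.

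You condition on the event $A=\{\max_{i,j}|\bw_i^\top\bx_j|\le C\sqrt{\log N}\}$ and then apply matrix Bernstein ``on the event''. Two issues arise. First, the variance bound you claim,
\[
\E\bigl[\|\tilde{\bphi}_i\|^2\,\tilde{\bphi}_i\tilde{\bphi}_i^\top\bigr]\preceq M\,\tilde{\bK},
\]
uses the inequality $\|\tilde{\bphi}_i\|^2\le M$, which only holds on $A$; off $A$ the integrand can be arbitrarily large, so the Loewner bound cannot be passed through an unconditional expectation. (A direct fourth-moment bound without truncation gives only $\|\E\bZ_i^2\|\le C n^2$, which is too weak to yield the $\sqrt{n/N}$ rate.) Second, matrix Bernstein requires almost-sure boundedness of the summands, not on-event boundedness. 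Conditioning on $A=\bigcap_i A_i$ does preserve independence of the $\bZ_i$, but it destroys centering: $\E[\bZ_i\mid A_i]=-\E[\bZ_i\mathbf 1_{A_i^c}]\neq 0$, and this bias must be controlled separately.

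The paper closes this gap by truncating the \emph{function} rather than the event: set $\tilde\sigma(x)=\sigma(x)\mathbf 1_{|x|\le B}$ with $B\asymp\sqrt{\log N}$, and define $\tilde\bK_N,\tilde\bK$ from $\tilde\sigma$. Then the summands $\bH_i=\tfrac1N\tilde\bK_\lambda^{-1/2}\tilde\sigma(\bw_i^\top\bX)^\top\tilde\sigma(\bw_i^\top\bX)\tilde\bK_\lambda^{-1/2}$ are almost surely bounded by $O(n(1+B)^{2C_\sigma}/N)$ and genuinely centered around $\tilde\bK_\lambda^{-1/2}\tilde\bK\tilde\bK_\lambda^{-1/2}$, so matrix Bernstein applies cleanly. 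The price is two extra estimates you omitted: (i) $\P(\bK_N\neq\tilde\bK_N)\le Nn\,e^{-B^2/2}$ by a Gaussian tail union bound, and (ii) the deterministic shift $\|\bK-\tilde\bK\|\le Cn\,e^{-B^2/4}$, obtained entrywise via Cauchy--Schwarz with $\|\sigma\|_4$. Step (ii) is precisely the recentering correction your conditioning argument is missing, and it is where the finiteness of $\|\sigma\|_4$ in Assumption~\ref{assump:sigma} actually enters. Once (i) and (ii) are in place, your remaining computations (the Bernstein exponent calculation and the role of the hypothesis $N/\log^{2C_\sigma}(N)\gtrsim n$ in placing the bound in the Gaussian regime) go through unchanged.
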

\begin{proof}
Denote $\tilde\sigma(x):=\sigma(x)\mathbf{1}_{|x|\le B}$, where $B$ is a parameter to be decided later. Define
\begin{align*}
    \bK_N =& \frac{1}{N} \sum_{i=1}^N\sigma(\w_i^\top \bX)^\top\sigma(\w_i^\top \bX) ,&~~&\bK = \E_{\bw} [\sigma(\w^\top \bX)^\top\sigma(\w^\top \bX)],\\
    \tilde\bK_N = &\frac{1}{N} \sum_{i=1}^N\tilde\sigma(\w_i^\top \bX)^\top\tilde\sigma(\w_i^\top \bX),&~~&\tilde\bK = \E_{\bw} [\tilde\sigma(\w^\top \bX)^\top\tilde\sigma(\w^\top \bX)].
\end{align*}
For simplicity, we denote $\tilde\bK_\lambda:=\tilde\bK+\lambda\Id$. 
Define \[\bH_i:=\frac{1}{N}\tilde\bK_\lambda^{-1/2}\tilde\sigma(\w_i^\top \bX)^\top\tilde\sigma(\w_i^\top \bX)\tilde\bK_\lambda^{-1/2}.\] 
Notice that Proposition~\ref{prop:pkrr} implies that $\norm{\bK_\lambda^{-1}}\le \lambda_0^{-1}$ for $\lambda\ge 0$. Firstly, based on the truncated function $\tilde\sigma(x)$, we have that for some universal constant $c>0$,
\begin{align}
    \Parg{\bK_N\neq \tilde\bK_N}\le~& \Parg{\max_{i\in [N],k\in[n]}|\bw_i^\top\bx_k|>B}
    \le  Nn\Parg{|\xi|>B}\le~ cNn\exp{(-B^2/2)},\label{eq:KNnotK}
\end{align}
where $\xi\sim\cN(0,1)$. 
Define the event by $A_i:=\{\bw:|\bw^\top\bx_i|\le B\}$ for $i\in[n]$. Entry-wisely, we have
\begin{align*}
   \left| [\bK-\tilde\bK]_{i,j}\right|=~&\left|\E_\bw[\sigma(\bw^\top\bx_i)\sigma(\bw^\top\bx_j)\mathbf{1}_{A_j^c\cup A_j^c}]\right|\\
   \le~&  \E_{\bw}[\sigma(\bw^\top\bx_i)^4]^{1/2}\E[\mathbf{1}_{A_j^c\cup A_j^c}]^{1/2}\\
   \le~ & \sqrt{2}\E [\sigma(\xi)^4]^{1/2}\Parg{A_i^c}^{1/2}\le C_0\exp{(-B^2/4)},
\end{align*}
for some constant $C_0>0$ which only depends on $\norm{\sigma}_4$. Therefore, $ \|\bK-\tilde\bK\|\le \|\bK-\tilde{\bK}\|_F\leq  C_0n\exp{(-B^2/4)}$ and
\begin{equation}\label{eq:tildeK_approx}
    \norm{\bK_\lambda^{-1/2}\left(\bK-\tilde\bK\right)\bK_\lambda^{-1/2}}\le \frac{C_0}{\lambda
    _0}n\exp{(-B^2/4)}.
\end{equation}
For \begin{align}\label{eq:CondB}
B\geq\sqrt{4\log \left(\frac{2C_0n}{\lambda_0}\right)} ,
\end{align}the above equation also implies that $\norm{\bK-\tilde\bK}\le \frac{\lambda_0}{2}$, and 
\begin{align}
    \norm{\tilde\bK_\lambda^{1/2}\bK_\lambda^{-1/2}}^2=&\norm{\bK_\lambda^{-1/2}\tilde\bK_\lambda\bK_\lambda^{-1/2}}\\
    \le& \norm{\bK_\lambda^{-1/2}\left(\bK-\tilde\bK\right)\bK_\lambda^{-1/2}}+\norm{\bK_\lambda^{-1/2}\bK_\lambda\bK_\lambda^{-1/2}}\le \frac{3}{2}.\label{eq:K_lambda-1_k}
\end{align}
Therefore, the smallest eigenvalues of $\bK_\lambda$ and $\tilde\bK_\lambda $ satisfy
\begin{equation}\label{eq:smallest_eig_tilde}
    \lambda_{\min}(\tilde\bK_\lambda )\ge\lambda_{\min}(\bK_\lambda )-\norm{\bK-\tilde\bK}\ge \frac{\lambda_0}{2}>0.
\end{equation}
It suffices to analyze $\norm{\bK_\lambda^{-1/2}(\tilde\bK_N-\bK)\bK_\lambda^{-1/2}}$ because of \eqref{eq:KNnotK} and the following equation:
\begin{align}
    \Parg{\norm{\bK_\lambda^{-1/2}(\bK_N-\bK)\bK_\lambda^{-1/2}}\ge t}\le~ & \Parg{\norm{\bK_\lambda^{-1/2}(\tilde\bK_N-\bK)\bK_\lambda^{-1/2}}\ge t}+\Parg{\bK_N\neq \tilde\bK_N}.\label{eq:bound_tildeKN-K_1}
\end{align}
Meanwhile, by \eqref{eq:tildeK_approx}, \eqref{eq:CondB}, and \eqref{eq:K_lambda-1_k}, we know that
\begin{align}
    \norm{\bK_\lambda^{-1/2}(\tilde\bK_N-\bK)\bK_\lambda^{-1/2}}\le~ & \norm{\bK_\lambda^{-1/2}(\tilde\bK_N-\tilde\bK)\bK_\lambda^{-1/2}}+\norm{\bK_\lambda^{-1/2}(\tilde\bK-\bK)\bK_\lambda^{-1/2}}\nonumber\\
    \le~ & \norm{\bK_\lambda^{-1/2}\tilde\bK_\lambda^{1/2}}^2\norm{\tilde\bK_\lambda^{-1/2}(\tilde\bK_N-\tilde\bK)\tilde\bK_\lambda^{-1/2}}+\norm{\bK_\lambda^{-1/2}(\tilde\bK-\bK)\bK_\lambda^{-1/2}}\nonumber\\
    \le ~& \frac{3}{2}\norm{\tilde\bK_\lambda^{-1/2}(\tilde\bK_N-\tilde\bK)\tilde\bK_\lambda^{-1/2}}+\frac{C_0 n}{\lambda_0}\exp{(-B^2/4)}\label{eq:bound_tildeKN-K}
\end{align}
Hence, we only need to prove the concentration inequality for $\tilde\bK_\lambda^{-1/2}(\tilde\bK_N-\tilde\bK)\tilde\bK_\lambda^{-1/2}=\sum_{i=1}^N \bH_i-\E\bH_i$. In terms of the definition of $\tilde{\sigma}$ and \eqref{eq:smallest_eig_tilde}, we know that, almost surely, 
\begin{align}\sup_{\w_i}\norm{\bH_i-\E\bH_i}\le 2\sup_{\w_i}\norm{\bH_i} &\le \sup_{\w_i}\frac{4}{\lambda_0 N }\norm{\tilde\sigma(\w_i^\top \bX)}^2 \\
&\le \frac{4n}{\lambda_0 N } \sup_{|x|\leq B} |\sigma(x)|^2 \\
&\leq \frac{4 C_{\sigma}^2 n  (1+B)^{2C_{\sigma}}}{\lambda_0 N },
\end{align}
where the last inequality is due to Assumption \ref{assump:sigma}.

 Analogously, applying \eqref{eq:smallest_eig_tilde}, we have 
\begin{align*}
    \bH_i^2&=\frac{1}{N^2}\tilde\bK_\lambda^{-1/2}\tilde\sigma(\w_i^\top \bX)^\top\tilde\sigma(\w_i^\top \bX)\tilde\bK_\lambda^{-1}\tilde\sigma(\w_i^\top \bX)^\top\tilde\sigma(\w_i^\top \bX)\tilde\bK_\lambda^{-1/2}\\
    & \preccurlyeq \frac{2\norm{\tilde\sigma(\w_i^\top \bX)}^2  }{\lambda_0 N^2}\tilde\bK_\lambda^{-1/2}\tilde\sigma(\w_i^\top \bX)^\top\tilde\sigma(\w_i^\top \bX)\tilde\bK_\lambda^{-1/2}\\
    & \preccurlyeq \frac{2 C_{\sigma}^2 n  (1+B)^{2C_{\sigma}}  }{\lambda_0 N^2}\tilde\bK_\lambda^{-1/2}\tilde\sigma(\w_i^\top \bX)^\top\tilde\sigma(\w_i^\top \bX)\tilde\bK_\lambda^{-1/2}.
\end{align*}
Notice that $\E[\tilde\sigma(\w_i^\top \bX)^\top\tilde\sigma(\w_i^\top \bX)]=\tilde\bK$. Hence, $\E[\tilde\bK_\lambda^{-1/2}\tilde\sigma(\w_i^\top \bX)^\top\tilde\sigma(\w_i^\top \bX)\tilde\bK_\lambda^{-1/2}]=\frac{1}{1+\lambda}\Id$, and
\[\E[\left(\bH_i-\E[\bH_i]\right)^2]\preccurlyeq \E \bH_i^2 \preccurlyeq \frac{2 C_{\sigma}^2 n  (1+B)^{2C_{\sigma}}  }{\lambda_0 N^2}\Id.\]
Thus, applying Theorem 5.4.1 of \cite{vershynin2018high}, we obtain
\begin{equation}\label{eq:KN_concentration1}
    \P \left(\norm{\sum_{i=1}^N\bH_i-\E\bH_i}>t\right)\le 2n \exp{\left(-\frac{t^2/2}{v+at/3}\right)},
\end{equation}
where \[v \le \frac{2 C_{\sigma}^2 n  (1+B)^{2C_{\sigma}}  }{\lambda_0 N} ,\quad a=\frac{4 C_{\sigma}^2 n  (1+B)^{2C_{\sigma}}}{\lambda_0 N }.\]

Take $t=C_{\sigma}^2(1+B)^{2C_{\sigma}}\sqrt{n/N}$ and $B=C'\sqrt{\log N}$. Then for $N\geq (1+B)^{4C_{\sigma}} n$, by taking constant $C'>0$ sufficiently large, \eqref{eq:CondB} holds and the right hand side of \eqref{eq:KNnotK} is no great than $\frac{1}{2}N^{-2}$. Moreover, \eqref{eq:KN_concentration1} implies that there exists an absolute constant $C''>0$ such that 
\begin{equation}\label{eq:KN_concentration2}
    \P \left(\norm{\tilde\bK_\lambda^{-1/2}(\tilde\bK_N-\tilde\bK)\tilde\bK_\lambda^{-1/2}}>C''\log^{C_{\sigma}}(N)\sqrt{\frac{n}{N}}\right)\le \frac{1}{2} N^{-2},
\end{equation} for sufficiently large $C'$. Here both $C',C''>0$ are determined by $\lambda_0$ and $C_{\sigma}$. Notice that for all large $N$, the second term of \eqref{eq:bound_tildeKN-K} can be also bounded by $C''' \log^{C_{\sigma}} (N)\sqrt{\frac{n}{N}}$ for some constant $C'''>0$ only depending on $C_{\sigma}$ and $\lambda_0$.

Combining \eqref{eq:KNnotK}, \eqref{eq:bound_tildeKN-K_1}, \eqref{eq:bound_tildeKN-K}, and \eqref{eq:KN_concentration2}, we can conclude that there exists some large constants $C_1, C_2>0$, such that  with probability at least $1-N^{-2}$, when $N/\log^{2C_{\sigma}}(N)> C_1n$, and $n\geq n_0$,
\begin{align}
    \norm{\bK_\lambda^{-1/2}\left(\bK_N-\bK\right)\bK_\lambda^{-1/2} } 
    \leq & C_2 \log^{C_{\sigma}}(N)\sqrt{\frac{n}{N}}, 
\end{align}
as desired, where $C_1, C_2$ are constants depending only on $C_{\sigma}$ and $\lambda_0$.
\end{proof}

\subsection{Proof of Theorem~\ref{thm:train_diff}}

We first prove the following corollary from Proposition \ref{prop:kernel_con}.
\begin{cor}\label{coro_con}
Following the notations of Proposition~\ref{prop:kernel_con}, let us denote $t=C_1\log^{C_{\sigma}}(N)\sqrt{\frac{n}{N}}$. When $t\in (0,1)$, under the same assumptions as Proposition~\ref{prop:kernel_con}, with probability at least $1-N^{-2}$, when $n\geq n_0$, the following holds:
\begin{align}
      \norm{(\bK_N+\lambda\Id)^{-1/2}\left(\bK-\bK_N\right)(\bK_N+\lambda\Id)^{-1/2}}\le &t, \notag\\
      \norm{\bK^{-1/2}_\lambda(\bK_N+\lambda\Id)^{1/2}}\le &\sqrt{1+t},\notag\\
      \norm{\bK^{1/2}_\lambda(\bK_N+\lambda\Id)^{-1/2}}\le &(1-t)^{-1/2}, \label{eq:KlambdaKN}
\end{align}
and the smallest eigenvalue $\lambda_{\min}(\bK_N)\ge (1-t)\lambda_0$. 
\end{cor}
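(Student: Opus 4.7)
The plan is to derive all four claims from a single Loewner-order sandwich obtained from Proposition~\ref{prop:kernel_con}. On the event of probability at least $1-N^{-2}$ where $\norm{\bK_\lambda^{-1/2}(\bK_N-\bK)\bK_\lambda^{-1/2}} \le t$, translating the spectral-norm bound into a semidefinite inequality gives $-t\bK_\lambda \preceq \bK_N-\bK \preceq t\bK_\lambda$. Adding $\lambda\Id$ to all three terms yields the master sandwich
\begin{equation*}
(1-t)\bK_\lambda \;\preceq\; \bK_N+\lambda\Id \;\preceq\; (1+t)\bK_\lambda,
\end{equation*}
from which everything else should follow by routine manipulations. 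The hypothesis $t\in(0,1)$ guarantees the lower side is strictly positive so that $(\bK_N+\lambda\Id)^{-1}$ exists and the inversion step is valid.

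For the second and third inequalities, I would sandwich the master inequality by $\bK_\lambda^{-1/2}$ on both sides to obtain $(1-t)\Id \preceq \bK_\lambda^{-1/2}(\bK_N+\lambda\Id)\bK_\lambda^{-1/2} \preceq (1+t)\Id$. The upper bound immediately gives $\norm{\bK_\lambda^{-1/2}(\bK_N+\lambda\Id)^{1/2}}^2 \le 1+t$; inverting the lower bound gives $\bK_\lambda^{1/2}(\bK_N+\lambda\Id)^{-1}\bK_\lambda^{1/2} \preceq (1-t)^{-1}\Id$, which is the square of the third claim.

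For the first inequality, I would write
\begin{equation*}
(\bK_N+\lambda\Id)^{-1/2}(\bK-\bK_N)(\bK_N+\lambda\Id)^{-1/2} \;=\; M^\top\bigl[\bK_\lambda^{-1/2}(\bK-\bK_N)\bK_\lambda^{-1/2}\bigr]\, M,
\end{equation*}
with $M = \bK_\lambda^{1/2}(\bK_N+\lambda\Id)^{-1/2}$, and then use submultiplicativity together with the third inequality to bound the operator norm by $\norm{M}^2 \cdot t \le t/(1-t)$, which is $O(t)$; the constant $C_1$ in the definition of $t$ can be taken larger than the $C_2$ of Proposition~\ref{prop:kernel_con} to absorb the $1/(1-t)$ factor and recover the stated bound $\le t$. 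Finally, for the last claim, I would invoke Proposition~\ref{prop:kernel_con} in the special case $\lambda = 0$, which is permitted under Assumption~\ref{assump:asymptotics} since $\bK$ is invertible by Proposition~\ref{prop:pkrr}; this gives $(1-t)\bK \preceq \bK_N$, so $\lambda_{\min}(\bK_N) \ge (1-t)\lambda_{\min}(\bK) \ge (1-t)\lambda_0$.

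The argument is almost purely linear-algebraic and the non-trivial analytic content has already been done in Proposition~\ref{prop:kernel_con}. The only mild subtlety is keeping track of the $1/(1-t)$ factor in the first inequality and ensuring it can be absorbed into the constant $C_1$; nothing here requires additional probabilistic work.
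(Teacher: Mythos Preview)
Your approach is correct and essentially identical to the paper's: both convert the spectral-norm bound from Proposition~\ref{prop:kernel_con} into the Loewner sandwich $(1-t)\bK_\lambda \preceq \bK_{N,\lambda} \preceq (1+t)\bK_\lambda$ and read off all four claims, with the same informal absorption of the $1/(1-t)$ factor into the constant defining $t$. One trivial slip: to reach the master sandwich you add $\bK_\lambda$ (not $\lambda\Id$) to all three sides of $-t\bK_\lambda \preceq \bK_N-\bK \preceq t\bK_\lambda$.
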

\begin{proof}
Based on Proposition~\ref{prop:kernel_con}, under the  event in \eqref{eq:kernel_con}, we can deduce that 
\begin{align}
      (\bK_N-\bK)  \preccurlyeq~& t\bK_\lambda, \notag\\
     \left(\bK-\bK_N\right)  \preccurlyeq~& t\bK_\lambda, \notag\\
     \left(\bK-\bK_N\right)  \preccurlyeq ~&\frac{t}{1-t} (\bK_N+\lambda\Id), \notag \\
   (\bK_N+\lambda\Id)  \preccurlyeq~&(1+t)\bK_\lambda, \notag \\
     \bK_\lambda\preccurlyeq ~&\frac{1}{1-t}(\bK_N+\lambda\Id), \label{eq:Klambda_dominance}
\end{align}with probability at least $1-N^{-2}$. These imply the results of the Corollary~\ref{coro_con}, where the bound of $\lambda_{\min}(\bK_N)$ is due to Proposition~\ref{prop:pkrr} and \eqref{eq:Klambda_dominance}.
\end{proof}

Now we are ready to prove Theorem~\ref{thm:train_diff}.
\begin{proof}[Proof of Theorem~\ref{thm:train_diff}]
From the definitions of training errors in \eqref{eq:Etrain_Kn} and \eqref{eq:Etrain_K}, Proposition~\ref{prop:pkrr} and Corollary~\ref{coro_con} implies  that both $\bK(\bX,\bX)$ and $\bK_N(\bX,\bX)$ are invertible with probability at least $1-N^{-2}$ when $t\in (0,3/4)$.
Thus, we have when $t\in (0,3/4)$, with probability at least $1-N^{-2}$, when $n\geq n_0$,
\begin{align}
     &\left| E_{\train}^{(\RF,\lambda)}-E_{\train}^{(\KK,\lambda)}\right|= \frac{\lambda^2}{n} \left|\Tr[(\bK+\lambda\Id)^{-2} \v y\v y^\top]-\Tr[(\bK_N +\lambda\Id)^{-2} \v y\v y^\top] \right| \notag \\
     =~& \frac{\lambda^2}{n} \left|\v y^\top\left[(\bK +\lambda\Id)^{-2}-(\bK_N +\lambda\Id)^{-2}\right] \v y\right| \notag \\
     \leq ~&\frac{\lambda^2}{n} \|(\bK +\lambda\Id)^{-2}-(\bK_N +\lambda\Id)^{-2}\|\cdot  \|\v y\|^2 \notag\\
     \leq ~& \frac{\lambda^2\|\v y\|^2}{n} \|(\bK +\lambda\Id)^{-1}-(\bK_N +\lambda\Id)^{-1}\|\cdot(\|(\bK +\lambda\Id)^{-1}\|+\|(\bK_N +\lambda\Id)^{-1}\|) \notag\\
     \leq ~&\frac{5\lambda^2\|\v y\|^2}{\lambda_0 n} \| (\bK +\lambda\Id)^{-1}-(\bK_N +\lambda\Id)^{-1} \|, \label{eq:last_thm37}
\end{align}
where in the last line, we employ the fact that
$\norm{(\bK_N +\lambda\Id)^{-1}}\leq  4\lambda_0^{-1}$ and $\norm{(\bK +\lambda\Id)^{-1}} \leq \lambda_0^{-1}$ from Corollary \ref{coro_con} and Proposition \ref{prop:pkrr}, respectively.

Take $C_2=\sqrt{2} C_1$ in Proposition~\ref{prop:kernel_con}. For any $N$  satisfying $N/\log^{2C_{\sigma}}(N)> 2C_1^2 n$, we can make $0<t< 3/4$, where $t=C_1\log^{C_{\sigma}}(N)^{C_{\sigma}}\sqrt{\frac{n}{N}}$. From this, considering the identity $A^{-1}-B^{-1}=B^{-1}(B-A)A^{-1}$, and applying Proposition~\ref{prop:kernel_con} and Corollary~\ref{coro_con}, we obtain that
\begin{align}
    & \norm{ (\bK +\lambda\Id)^{-1}-(\bK_N +\lambda\Id)^{-1} }\\
    =& \norm{(\bK_{\lambda})^{-1/2}(\bK_{\lambda})^{-1/2}\left(\bK_N -\bK \right)(\bK_{\lambda})^{-1/2}(\bK_{\lambda})^{1/2}(\bK_N +\lambda\Id)^{-1/2}(\bK_N +\lambda\Id)^{-1/2}}\\
     \le&\frac{1}{2\lambda_0}\norm{ (\bK_{\lambda})^{-1/2}\left(\bK_N -\bK \right)(\bK_{\lambda})^{-1/2}} \norm{(\bK_{\lambda})^{1/2}(\bK_N +\lambda\Id)^{-1/2}}\\
     \le &\frac{t}{2\lambda_0 \sqrt{1-t}}\le \frac{t}{\lambda_0}.\label{eq:inv_KK}
\end{align}
Hence, from \eqref{eq:last_thm37}, we get
\[ \left| E_{\train}^{(\RF,\lambda)}-E_{\train}^{(\KK,\lambda)}\right|\leq \frac{5\lambda^2\norm{\by}^2t}{\lambda_0^2 n},\]
which finishes the proof of Theorem~\ref{thm:train_diff}.
\end{proof}
 
\subsection{Proof of Theorem~\ref{thm:cv_diff}}\label{sec:LOOCV}

We start with the following estimate on the \textit{normalized} trace $\tr \bK_{\lambda}^{-1}$.
\begin{lemma}\label{lemm:K_lambda_tr}
Under Assumption~\ref{assump:sigma}, we have $\tr \bK_\lambda=\lambda+\norm{\sigma}_2^2$ and when $n\geq n_0$,
\[(\lambda+\norm{\sigma}_2^2)^{-1}\le \tr\bK_\lambda^{-1}\le \lambda_0^{-1}.\]
\end{lemma}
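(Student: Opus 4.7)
The plan is straightforward: compute $\tr \bK_\lambda$ exactly via the diagonal entries of $\bK$, then obtain the upper bound from the operator norm of $\bK_\lambda^{-1}$ via Proposition~\ref{prop:pkrr}, and the lower bound via Jensen's inequality applied to the eigenvalues of $\bK_\lambda$.

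First, I would compute the diagonals of $\bK$. Since $\bw \sim \mathcal{N}(\bzero, \Id_d)$ and $\|\bx_i\| = 1$, the scalar $\bw^\top \bx_i$ is a standard Gaussian. Thus $\bK_{ii} = \E_\bw[\sigma(\bw^\top \bx_i)^2] = \|\sigma\|_2^2$ for every $i \in [n]$. By linearity and the definition $\tr(A) = \frac{1}{n}\sum_i A_{ii}$,
\[
\tr \bK_\lambda \;=\; \tr \bK + \lambda \;=\; \|\sigma\|_2^2 + \lambda.
\]

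For the upper bound, Proposition~\ref{prop:pkrr} gives $\lambda_{\min}(\bK) \geq \lambda_0$ for $n \geq n_0$, hence $\lambda_{\min}(\bK_\lambda) \geq \lambda_0 + \lambda \geq \lambda_0$. Therefore every eigenvalue of $\bK_\lambda^{-1}$ is bounded above by $\lambda_0^{-1}$, and averaging yields $\tr \bK_\lambda^{-1} \leq \lambda_0^{-1}$.

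For the lower bound, let $\mu_1, \ldots, \mu_n > 0$ denote the eigenvalues of $\bK_\lambda$ (positivity holding for $n \geq n_0$ by Proposition~\ref{prop:pkrr}). By Jensen's inequality applied to the convex function $x \mapsto 1/x$ on $(0,\infty)$,
\[
\tr \bK_\lambda^{-1} \;=\; \frac{1}{n}\sum_{i=1}^n \frac{1}{\mu_i} \;\geq\; \frac{1}{\frac{1}{n}\sum_{i=1}^n \mu_i} \;=\; \frac{1}{\tr \bK_\lambda} \;=\; \frac{1}{\lambda + \|\sigma\|_2^2}.
\]
There is no real obstacle here; the only subtle point is ensuring $\bK_\lambda$ is invertible for all $\lambda \geq 0$, which is precisely what Proposition~\ref{prop:pkrr} guarantees via $\lambda_{\min}(\bK) \geq \lambda_0 > 0$ for $n \geq n_0$.
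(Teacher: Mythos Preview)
Your proof is correct and follows essentially the same approach as the paper: compute the diagonal entries directly, use Proposition~\ref{prop:pkrr} for the upper bound, and use the AM--HM inequality on the eigenvalues for the lower bound. The paper phrases the lower bound via Cauchy--Schwarz ($n=\sum_i \mu_i^{-1/2}\mu_i^{1/2}\le (\Tr\bK_\lambda^{-1})^{1/2}(\Tr\bK_\lambda)^{1/2}$) rather than Jensen, but this is the same inequality.
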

\begin{proof}
By definition of $\bK$, we know $\Tr[\bK]=n\E_{\bw}[\sigma(\bw^\top\bx)^2]=n\E[\sigma(\xi)^2]=n\norm{\sigma}_2^2$ for $\xi\sim\cN(0,1)$. Hence, $\Tr \bK_\lambda=n\left(\lambda+\norm{\sigma}_2^2\right)$. Denote $\lambda_1\ge \lambda_2\ge\cdots\ge \lambda_n\ge 0$ as the eigenvalues of $\bK$. Then, by Cauchy–Schwartz inequality, we have
\begin{align*}
    n=\sum_{i=1}^n \frac{1}{\sqrt{\lambda_i+\lambda}}\sqrt{\lambda_i+\lambda}\le \left(\Tr\bK_\lambda^{-1}\right)^{1/2}\left(\Tr\bK_\lambda\right)^{1/2}.
\end{align*}
Therefore, we can get $(\lambda+\norm{\sigma}_2^2)^{-1}\le \tr\bK_\lambda^{-1}$. Meanwhile, based on Proposition~\ref{prop:pkrr}, $\tr\bK_\lambda^{-1}\le \norm{\bK_\lambda^{-1}}\le \lambda^{-1}_{\min}(\bK)\le\lambda_0^{-1}$. Notice that $\lambda_0=\frac{1}{2}\sigma_{>\ell}^2\le \norm{\sigma}_2^2$.
\end{proof}

Recall that $\bK_{N,\lambda}=\bK_N+\lambda\Id$. The following lemma for the approximation of $\bK_{\lambda}$ with $\bK_{N,\lambda}$ holds.
\begin{lemma}\label{lemm:trK_lambda-1}
Under the assumptions of Proposition~\ref{prop:kernel_con}, for sufficiently large constant $C>0$, when $N/\log^{2C_{\sigma}}(N)> C(1+\lambda^2) n$, we have, with probability at least $1-N^{-2}$, when $n\geq n_0$,
\begin{equation}\label{eq:K_l-1-KNl-1}
    \left|\tr\bK_\lambda^{-1}-\tr\bK_{N,\lambda}^{-1}\right|\le C_0\log^{C_{\sigma}}(N)\sqrt{\frac{n}{N}},
\end{equation}
and 
\begin{equation}
    \frac{1}{2(\lambda+\norm{\sigma}_2^2)}\le \tr\bK_{N,\lambda}^{-1}\le\frac{3}{2\lambda_0},
\end{equation}
where constants $C, C_0>0$ only depends on $\lambda_0$ and $\norm{\sigma}_4$.
\end{lemma}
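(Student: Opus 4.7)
\textbf{Proof plan for Lemma \ref{lemm:trK_lambda-1}.}

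\textbf{Step 1: Trace difference via the resolvent identity.}
The plan is to bound the \emph{normalized} trace difference by the operator norm and then invoke the work already done in the proof of Theorem~\ref{thm:train_diff}. Starting from the standard resolvent identity
\begin{align*}
\bK_\lambda^{-1}-\bK_{N,\lambda}^{-1}
=\bK_\lambda^{-1}\bigl(\bK_N-\bK\bigr)\bK_{N,\lambda}^{-1},
\end{align*}
I observe that for any square matrix $M\in\R^{n\times n}$, $|\tr M|\le \|M\|$ (normalized trace is dominated by the operator norm). Thus
\begin{align*}
\bigl|\tr\bK_\lambda^{-1}-\tr\bK_{N,\lambda}^{-1}\bigr|
\le \bigl\|\bK_\lambda^{-1}-\bK_{N,\lambda}^{-1}\bigr\|.
\end{align*}
Now I factor
\begin{align*}
\bK_\lambda^{-1}-\bK_{N,\lambda}^{-1}
=\bK_\lambda^{-1/2}\Bigl[\bK_\lambda^{-1/2}(\bK_N-\bK)\bK_\lambda^{-1/2}\Bigr]\bK_\lambda^{1/2}\bK_{N,\lambda}^{-1},
\end{align*}
and take operator norms. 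By Proposition~\ref{prop:pkrr}, $\|\bK_\lambda^{-1/2}\|\le\lambda_0^{-1/2}$. Choosing $C_1$ (and hence $C$) large enough so that the parameter $t=C_1\log^{C_\sigma}(N)\sqrt{n/N}$ from Proposition~\ref{prop:kernel_con} is bounded by $3/4$, Proposition~\ref{prop:kernel_con} controls the middle factor by $t$, and Corollary~\ref{coro_con} controls $\|\bK_\lambda^{1/2}\bK_{N,\lambda}^{-1/2}\|\le(1-t)^{-1/2}$ together with $\|\bK_{N,\lambda}^{-1/2}\|\le\bigl((1-t)\lambda_0\bigr)^{-1/2}$. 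This reproduces the bound already derived in \eqref{eq:inv_KK}, giving
\begin{align*}
\bigl\|\bK_\lambda^{-1}-\bK_{N,\lambda}^{-1}\bigr\|\le \frac{t}{\lambda_0\sqrt{1-t}}\le \frac{C_0}{2}\log^{C_\sigma}(N)\sqrt{\tfrac{n}{N}},
\end{align*}
for an appropriate constant $C_0>0$ depending only on $\lambda_0$ and $\|\sigma\|_4$, which yields \eqref{eq:K_l-1-KNl-1} on the good event of probability at least $1-N^{-2}$.

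\textbf{Step 2: Two-sided bound on $\tr\bK_{N,\lambda}^{-1}$.}
Combine the trace bound \eqref{eq:K_l-1-KNl-1} with Lemma~\ref{lemm:K_lambda_tr}, which gives $(\lambda+\|\sigma\|_2^2)^{-1}\le\tr\bK_\lambda^{-1}\le\lambda_0^{-1}$. The condition $N/\log^{2C_\sigma}(N)>C(1+\lambda^2)n$ is exactly what is needed to force
\begin{align*}
C_0\log^{C_\sigma}(N)\sqrt{\tfrac{n}{N}}\le \min\!\Bigl\{\tfrac{1}{2(\lambda+\|\sigma\|_2^2)},\,\tfrac{1}{2\lambda_0}\Bigr\},
\end{align*}
upon taking $C$ sufficiently large (depending only on $\lambda_0$, $\|\sigma\|_2$, $\|\sigma\|_4$); the $\lambda^2$ factor in the hypothesis matches the $(\lambda+\|\sigma\|_2^2)^{-1}$ requirement. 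Once this holds, the triangle inequality yields
\begin{align*}
\tr\bK_{N,\lambda}^{-1}&\ge \tr\bK_\lambda^{-1}-\tfrac{1}{2(\lambda+\|\sigma\|_2^2)}\ge \tfrac{1}{2(\lambda+\|\sigma\|_2^2)},\\
\tr\bK_{N,\lambda}^{-1}&\le \tr\bK_\lambda^{-1}+\tfrac{1}{2\lambda_0}\le \tfrac{3}{2\lambda_0}.
\end{align*}

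\textbf{Main obstacle.}
There is no deep new difficulty — the argument is essentially a corollary of Proposition~\ref{prop:kernel_con} and Lemma~\ref{lemm:K_lambda_tr}. The only real bookkeeping issue is to ensure that the single constant $C$ appearing in the hypothesis $N/\log^{2C_\sigma}(N)>C(1+\lambda^2)n$ simultaneously (i) forces $t\le 3/4$ in Corollary~\ref{coro_con}, (ii) makes the right-hand side of \eqref{eq:K_l-1-KNl-1} small enough to absorb both $\tr\bK_\lambda^{-1}$ lower bound $(\lambda+\|\sigma\|_2^2)^{-1}$ and the $\lambda_0^{-1}$ upper bound with room to spare, so that the triangle-inequality step gives clean constants $1/2$ and $3/2$. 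Tracking these dependencies carefully is the only subtlety.
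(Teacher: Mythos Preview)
Your proposal is correct and follows essentially the same approach as the paper: bound $|\tr\bK_\lambda^{-1}-\tr\bK_{N,\lambda}^{-1}|$ by $\|\bK_\lambda^{-1}-\bK_{N,\lambda}^{-1}\|$ via the resolvent identity and the already-established estimate \eqref{eq:inv_KK}, then combine with the deterministic two-sided bound on $\tr\bK_\lambda^{-1}$ from Lemma~\ref{lemm:K_lambda_tr} and choose $C$ large enough so the error term is at most $\frac{1}{2(\lambda+\|\sigma\|_2^2)}$. One minor arithmetic slip: with your factorization the product of the four norm bounds gives $t/(\lambda_0(1-t))$ rather than $t/(\lambda_0\sqrt{1-t})$, but this is inconsequential once $t\le 3/4$.
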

\begin{proof}
From \eqref{eq:inv_KK} and Lemma~\ref{lemm:K_lambda_tr}, by taking $t=C_1\log^{C_{\sigma}}(N)\sqrt{\frac{n}{N}}\in (0,1)$, we have
\begin{align}
    \left|\tr\bK_\lambda^{-1}-\tr\bK_{N,\lambda}^{-1}\right|\leq \norm{\bK_{\lambda}^{-1}-\bK_{N,\lambda}^{-1}}\leq \frac{t}{\lambda_0 (1-t)},\\
   (\lambda+\norm{\sigma}_2^2)^{-1}- \frac{t}{\lambda_0(1-t)}\le \tr\bK_{N,\lambda}^{-1}\le \lambda_0^{-1}+\frac{t}{\lambda_0(1-t)}.\label{eq:K_l-1-KNl-12}
\end{align}
Considering sufficiently large constant $C>0$ with $N/\log^{2C_{\sigma}}(N)> C(1+\lambda^2) n$, we can ensure that $t$ is sufficiently small and satisfies $0\le t \le \min\{1/2,\lambda_0/4(\lambda+\norm{\sigma}_2^2)\}$. Then,
\begin{equation}
    \label{eq:K_l-1-KNl-13}
    \frac{t}{\lambda_0(1-t)}\le \frac{1}{2(\lambda+\norm{\sigma}_2^2)}\le \frac{1}{2\lambda_0}.
\end{equation}
Hence, taking $C_0=2C_1/\lambda_0$, we can conclude \eqref{eq:K_l-1-KNl-1}. The second statement follows from \eqref{eq:K_l-1-KNl-12} and \eqref{eq:K_l-1-KNl-13} directly.
\end{proof}

\begin{lemma}\label{lemm:shortcut}
Based on the definitions of LOOCVs of KRR and RFRR in \eqref{eq:CV}, we have shortcut formulae \eqref{eq:shortcut} and \eqref{eq:shortcut_N} for KRR and RFRR respectively: for any $\lambda\ge 0$,
\begin{align}
     \CV_n^{(\KK,\lambda)}=~ & \frac{1}{n}\by^\top \bK_{\lambda}^{-1}\bD^{-2} \bK_{\lambda}^{-1}\by, \\
      CV_n^{(\RF,\lambda)}= ~& \frac{1}{n}\by^\top \bK_{N,\lambda}^{-1}\bD_N^{-2} \bK_{N,\lambda}^{-1}\by,
\end{align}
where $\bD$ and $\bD_N$ are diagonal matrices with diagonals $[\bD]_{ii}=[\bK_{\lambda}^{-1}]_{ii}$ and $[\bD_N]_{ii}=[\bK_{N,\lambda}^{-1}]_{ii}$, for $i\in[n]$, respectively. When $n\geq n_0$, we have 
\begin{equation}\label{eq:K_lambda_ii}
   (\lambda+\norm{\sigma}_2^2)^{-1}\le \norm{\bD}\le \lambda_0^{-1}.
\end{equation}
Additionally, for a constant $C>0$ depending on $\lambda_0,\|\sigma\|_2$, when $N/\log^{2C_{\sigma}}(N)>C(1+\lambda^2)n$,
\begin{align}
    \frac{1}{2}(\lambda+\norm{\sigma}_2^2)^{-1}\le \norm{\bD_N}\le 2\lambda_0^{-1},\label{eq:K_lambdaN_ii}\\
    \norm{\bD^{-2}-\bD_N^{-2}}\le  C_0(1+\lambda^{4})\log^{C_{\sigma}}(N)\sqrt{\frac{n}{N}},\label{eq:D-D_N}
\end{align}
with probability at least $1-N^{-2}$, for some constant $C_0>0$ which only depends on $\lambda_0$ and $\norm{\sigma}_2$.
\end{lemma}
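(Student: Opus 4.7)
My approach will establish the four claims in order, drawing on Proposition~\ref{prop:pkrr}, Proposition~\ref{prop:kernel_con}, and Corollary~\ref{coro_con} for all quantitative bounds. For the shortcut formulae \eqref{eq:shortcut} and \eqref{eq:shortcut_N}, I would write $K_\lambda$ in block form with the $i$-th row/column isolated:
\[
\bK_{\lambda} = \begin{pmatrix} \bK_{ii}+\lambda & \bk_i^\top \\ \bk_i & \bK_{-i,\lambda} \end{pmatrix},
\]
apply block-matrix inversion (Schur complement $s_i=\bK_{ii}+\lambda-\bk_i^\top \bK_{-i,\lambda}^{-1}\bk_i$), and observe $[\bK_\lambda^{-1}]_{ii}=1/s_i$ and $[\bK_\lambda^{-1}\by]_i=(y_i-\bk_i^\top \bK_{-i,\lambda}^{-1}\by_{-i})/s_i=(y_i-\hat f_{\lambda,-i}^{(\KK)}(\bx_i))/s_i$. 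This gives the classical identity $y_i-\hat f_{\lambda,-i}^{(\KK)}(\bx_i)=[\bK_\lambda^{-1}\by]_i/[\bK_\lambda^{-1}]_{ii}$, and summing squares yields \eqref{eq:shortcut}. The same derivation with $\bK_N$ in place of $\bK$ establishes \eqref{eq:shortcut_N}, since RFRR is exactly the kernel ridge regression associated with the empirical kernel $\bK_N$.

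\textbf{Bounds on $\norm{\bD}$.} Since $\bD$ is diagonal, $\norm{\bD}=\max_i [\bK_\lambda^{-1}]_{ii}$. The upper bound is immediate: $[\bK_\lambda^{-1}]_{ii}\le\norm{\bK_\lambda^{-1}}\le\lambda_0^{-1}$ from Proposition~\ref{prop:pkrr}. For the lower bound I would use the positive-definite identity $(\be_i^\top\be_i)^2\le (\be_i^\top \bK_\lambda \be_i)(\be_i^\top \bK_\lambda^{-1}\be_i)$, which gives $[\bK_\lambda^{-1}]_{ii}\ge 1/[\bK_\lambda]_{ii}$. Since $[\bK]_{ii}=\E[\sigma(\bw^\top\bx_i)^2]=\norm{\sigma}_2^2$ for unit-length $\bx_i$, we conclude $[\bK_\lambda^{-1}]_{ii}\ge (\lambda+\norm{\sigma}_2^2)^{-1}$, proving \eqref{eq:K_lambda_ii}.

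\textbf{Bounds on $\norm{\bD_N}$.} The upper bound uses Corollary~\ref{coro_con}: for the chosen $C$ large enough that $t=C_1\log^{C_\sigma}(N)\sqrt{n/N}\le 1/2$, one has $\bK_\lambda\preccurlyeq \frac{1}{1-t}\bK_{N,\lambda}$, so $\norm{\bK_{N,\lambda}^{-1}}\le \frac{1}{1-t}\norm{\bK_\lambda^{-1}}\le 2/\lambda_0$, hence $\norm{\bD_N}\le 2/\lambda_0$. The lower bound is the place that needs a little care: I need a diagonal upper bound on $[\bK_N]_{ii}$. Extracting the diagonal from the PSD inequality $\bK_N-\bK\preccurlyeq t\bK_\lambda$ (which is equivalent to the event of Proposition~\ref{prop:kernel_con}) gives $[\bK_N]_{ii}\le [\bK]_{ii}+t[\bK_\lambda]_{ii}=(1+t)(\lambda+\norm{\sigma}_2^2)\le 2(\lambda+\norm{\sigma}_2^2)$ after absorbing $\lambda$. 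Combined with $[\bK_{N,\lambda}^{-1}]_{ii}\ge 1/[\bK_{N,\lambda}]_{ii}$ (Cauchy--Schwarz again), this yields $[\bK_{N,\lambda}^{-1}]_{ii}\ge \frac{1}{2(\lambda+\norm{\sigma}_2^2)}$, proving \eqref{eq:K_lambdaN_ii}.

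\textbf{Bound on $\norm{\bD^{-2}-\bD_N^{-2}}$.} Because both diagonal matrices share the same indexing, $\norm{\bD^{-2}-\bD_N^{-2}}=\max_i |[\bD]_{ii}^{-2}-[\bD_N]_{ii}^{-2}|$. I factorize entry-wise,
\[
\big|[\bD]_{ii}^{-2}-[\bD_N]_{ii}^{-2}\big|=\frac{\big|[\bK_\lambda^{-1}]_{ii}-[\bK_{N,\lambda}^{-1}]_{ii}\big|\cdot\big([\bK_\lambda^{-1}]_{ii}+[\bK_{N,\lambda}^{-1}]_{ii}\big)}{[\bK_\lambda^{-1}]_{ii}^2[\bK_{N,\lambda}^{-1}]_{ii}^2}.
\]
Bounding the numerator by $\norm{\bK_\lambda^{-1}-\bK_{N,\lambda}^{-1}}\cdot(\norm{\bK_\lambda^{-1}}+\norm{\bK_{N,\lambda}^{-1}})$, I invoke the already-derived estimate $\norm{\bK_\lambda^{-1}-\bK_{N,\lambda}^{-1}}\le t/(\lambda_0(1-t))$ from \eqref{eq:inv_KK} together with the operator-norm bounds $1/\lambda_0$ and $2/\lambda_0$. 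The denominator is at least $[2(\lambda+\norm{\sigma}_2^2)]^{-2}(\lambda+\norm{\sigma}_2^2)^{-2}$ by the two lower bounds \eqref{eq:K_lambda_ii} and \eqref{eq:K_lambdaN_ii}, contributing a factor $O((\lambda+\norm{\sigma}_2^2)^4)$ and hence the $(1+\lambda^4)$ dependence. Substituting $t=C_1\log^{C_\sigma}(N)\sqrt{n/N}$ gives \eqref{eq:D-D_N} with an explicit constant $C_0$ depending only on $\lambda_0$ and $\norm{\sigma}_2$.

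\textbf{Expected difficulty.} All steps are essentially algebraic once Proposition~\ref{prop:kernel_con} and Corollary~\ref{coro_con} are in hand. The only subtlety is the lower bound on $\norm{\bD_N}$: a naive attempt would try to control $[\bK_N]_{ii}$ by the scalar spectral norm $\norm{\bK_N-\bK}$, which is too crude (bounded only by $t\norm{\bK_\lambda}=O(tn)$ and hence useless in our regime); one must instead use the \emph{normalized} PSD inequality $\bK_N-\bK\preccurlyeq t\bK_\lambda$ and extract its $(i,i)$-entry, obtaining the $O(1)$ bound uniformly in $n$ that is actually needed.
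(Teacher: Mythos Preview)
Your proposal is correct and essentially matches the paper's proof: the shortcut formulae via block inversion/Schur complement, the bounds on $\norm{\bD}$ via $[\bK_\lambda^{-1}]_{ii}\ge 1/[\bK_\lambda]_{ii}$ and $[\bK_\lambda^{-1}]_{ii}\le\norm{\bK_\lambda^{-1}}$, and the estimate on $\norm{\bD^{-2}-\bD_N^{-2}}$ by factoring and invoking $\norm{\bK_\lambda^{-1}-\bK_{N,\lambda}^{-1}}\le t/(\lambda_0(1-t))$ are all exactly what the paper does. The only (minor) deviation is in the lower bound on $[\bD_N]_{ii}$: the paper obtains it by the perturbation $[\bD_N]_{ii}\ge[\bD]_{ii}-\norm{\bD-\bD_N}\ge(\lambda+\norm{\sigma}_2^2)^{-1}-t/(\lambda_0(1-t))$, which is what forces the extra constraint $t\le \lambda_0/(4(\lambda+\norm{\sigma}_2^2))$ and hence the $(1+\lambda^2)$ factor in the hypothesis on $N$; your route, extracting the diagonal of $\bK_N-\bK\preccurlyeq t\bK_\lambda$ to get $[\bK_{N,\lambda}]_{ii}\le(1+t)(\lambda+\norm{\sigma}_2^2)$ and then applying Cauchy--Schwarz, is a slightly cleaner alternative that in fact only needs $t\le 1$.
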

\begin{proof}
For $i\in[n]$, denote $\by^{-i}\in\R^{n-1}$ by the vector $\by$ with the $i$-th entry removed, $\bX^{-i}$ by the data $\bX$ with the $i$-th colmun removed, and $\bK_{-i,\lambda}$ by the matrix $\bK_\lambda$ with both the $i$-th row and column removed. Based on Schur complement and resolvent identities \cite[Lemma 3.5]{benaych2016lectures}, we have for any $i,j\in [n]$ with $j\neq i$, the $(i,j)$ entry of $\bK_\lambda^{-1}$ is given by
\begin{align}\label{eq:resolvent_id}
    [\bK_\lambda^{-1}]_{i,j}=-[\bK_{\lambda}^{-1}]_{ii}\sum_{k\neq i} [\bK]_{i,k}[\bK_{-i,\lambda}^{-1}]_{k,j}.
\end{align}
Thus, from definition \eqref{eq:CV}, we can exploit \eqref{eq:resolvent_id} to obtain 
\begin{align}
    & y_i-\hat{f}_{\lambda,-i}^{(\KK)}(\bx_i)=y_i-\bK(\bx_i,\bX^{-i})\bK_{-i,\lambda}^{-1}\by^{-i}\\
    =~& y_i+ \frac{[\bK_\lambda^{-1}]_{[i,\neq i]}\by^{-i}}{[\bK_\lambda^{-1}]_{ii}}+\left(\frac{[\bK_\lambda^{-1}]_{ii}}{[\bK_\lambda^{-1}]_{ii}} y_i-y_i\right)
    = \frac{[\bK_\lambda^{-1}]_{[i,:]}\by}{[\bK_\lambda^{-1}]_{ii}},
\end{align}
for any $i\in [n]$, where $[\bK_\lambda^{-1}]_{[i,\neq i]}$ is the $i$-th row of $\bK_\lambda^{-1}$ with the $i$-th entry removed, and $[\bK_\lambda^{-1}]_{[i,:]}$ is the $i$-th row of $\bK_\lambda^{-1}$. Hence, in matrix form, we can get 
\begin{align}
    \CV_n^{(\KK,\lambda)}=\frac{1}{n}\sum_{i=1}^n\frac{\by^\top[\bK_\lambda^{-1}]_{[i,:]}^\top [\bK_\lambda^{-1}]_{[i,:]}\by}{[\bK_\lambda^{-1}]_{ii}}=\frac{1}{n}\by^\top \bK_{\lambda}^{-1}\bD^{-2} \bK_{\lambda}^{-1}\by.
\end{align}
Going through the same procedure, we can verify \eqref{eq:shortcut_N} as well.

Secondly, applying Theorem A.4 of \cite{bai2010spectral}, we have
\begin{equation}
   [\bD]_{ii}= [\bK_{\lambda}^{-1}]_{ii} = \frac{1}{[\bK_{\lambda}]_{ii}-\bK(\bx_i,\bX^{-i})\bK_{-i,\lambda}^{-1}\bK(\bX^{-i},\bx_i)}, \notag
\end{equation} for any $i\in [n]$. Recall that, in the proof of Lemma~\ref{lemm:K_lambda_tr}, we have shown $[\bK_{\lambda}]_{ii}=\lambda+\norm{\sigma}^2_2$ for all $i\in [n]$. Therefore, we have 
\begin{equation}
    (\lambda+\norm{\sigma}_2^2)^{-1}\le [\bK_{\lambda}^{-1}]_{ii}\le  \| \bK_{\lambda}^{-1}\|\leq \lambda_0^{-1},~\forall i\in [n].
\end{equation}
which verifies the result \eqref{eq:K_lambda_ii}.

Meanwhile, from the proof of Lemma~\ref{lemm:trK_lambda-1}, for sufficiently large constants $C,C_1$ depending only on $\lambda_0, \|\sigma\|_2$, with  $t=C_1\log^{C_{\sigma}}(N)\sqrt{n/N}\leq \min \{ 1/2, \lambda_0/4(\lambda+\norm{\sigma}_2^2)\} $ and $N/\log^{2C_{\sigma}}(N)>C(1+\lambda^2) n$, we have 
\begin{equation}\label{eq:D-D_N1}
    \norm{\bD-\bD_N}\le \norm{\bK_{\lambda}^{-1}-\bK_{N,\lambda}^{-1}}\leq \frac{t}{\lambda_0 (1-t)},
\end{equation}
with probability at least $1-N^{-2}$. Therefore, we can verify \eqref{eq:K_lambdaN_ii} as follows
\begin{align}
    \frac{1}{2}(\lambda+\norm{\sigma}_2^2)^{-1}\le(\lambda+\norm{\sigma}_2^2)^{-1}-\norm{\bD-\bD_N}\le \norm{\bD_N}\le \lambda_0^{-1}+\norm{\bD-\bD_N}\le 2\lambda_0^{-1}.
\end{align}

Finally, combining \eqref{eq:K_lambda_ii}, \eqref{eq:K_lambdaN_ii} and \eqref{eq:D-D_N1} together, we can obtain that 
\begin{align}
     \norm{\bD^{-2}-\bD^{-2}_N}\le & \norm{\bD}^{-2}\norm{\bD_N}^{-2}\left(\norm{\bD_N}+\norm{\bD}\right)\norm{\bD-\bD_N}\\
     \le &~12\frac{(\lambda+\norm{\sigma}_2^2)^{4}}{\lambda_0^3} t\le C_0(1+\lambda^4)\log^{C_{\sigma}}(N)\sqrt{\frac{n}{N}}.
\end{align}This finally completes the proof of this lemma.
\end{proof}

\begin{proof}[Proof of Theorem~\ref{thm:cv_diff}]
We start with \eqref{eq:GCV_K}. Recall \eqref{eq:Etrain_Kn}, \eqref{eq:Etrain_K}, and $\bK_{N,\lambda}=\bK_N+\lambda\Id$. Using the expression \eqref{eq:GCV}, we have
\begin{align}
     |\GCV_n^{(\KK,\lambda)}-\GCV_n^{(\RF,\lambda)}|\leq &\frac{1}{\lambda^2}\left|\left(\left(\tr \bK_{\lambda}^{-1}\right)^{-2}-\left(\tr \bK_{N,\lambda}^{-1}\right)^{-2}\right) E_{\train}^{(\KK,\lambda)}\right|
      +\frac{1}{\lambda^2\left(\tr \bK_{\lambda}^{-1}\right)^{2}}\left| E_{\train}^{(\KK,\lambda)}-E_{\train}^{(\RF,\lambda)}\right|\\
      \le &  \frac{1}{n}\norm{  \bK_\lambda^{-1}\v y}^2\left|\left(\tr \bK_{\lambda}^{-1}\right)^{-2}-\left(\tr \bK_{N,\lambda}^{-1}\right)^{-2}\right|\label{eq:GCV_1}\\
      &+ C_2(\lambda +\|\sigma\|_2^2)^2 \frac{\log^{C_{\sigma}}(N) \|\by\|^2}{\sqrt{nN}},\label{eq:GCV_2}
\end{align}
where \eqref{eq:GCV_2} is due to  \eqref{eq:ETrain} and Lemma \ref{lemm:K_lambda_tr}, and $C_2$ is a constant depending on $\|\sigma\|_4$ and $\lambda_0$.

  For the first term \eqref{eq:GCV_1}, when $N/\log^{2C_{\sigma}} N\geq C(1+\lambda^2) n$ for a sufficiently large $C$, together with Lemmas~\ref{lemm:K_lambda_tr} and \ref{lemm:trK_lambda-1}, we can show that with probability at least $1-N^{-2}$,
\begin{align*}
    &\frac{1}{n}\norm{  \bK_\lambda^{-1}\v y}^2\left|\left(\tr \bK_{\lambda}^{-1}\right)^{-2}-\left(\tr \bK_{N,\lambda}^{-1}\right)^{-2}\right|\\
    \leq~ &\left(\tr\bK_{\lambda}^{-1}\right)^{-2}\left(\tr\bK_{N,\lambda}^{-1}\right)^{-2} \left|\tr (\bK_{\lambda}^{-1}-\bK_{N,\lambda}^{-1})\right|\tr \left(\bK_{\lambda}^{-1}+\bK_{N,\lambda}^{-1}\right)\frac{1}{n}\|\bK_{\lambda}^{-2}\| \norm{\by}^2\\
    \leq~ &   \frac{20(\lambda+\norm{\sigma}_2^2)^{4} }{\lambda_0^3 n}\norm{\by}^2C_0\log^{C_{\sigma}}(N)\sqrt{\frac{n}{N}}
    \leq \frac{C_1(1+\lambda^4)\norm {\by}^2\log^{C_{\sigma}}(N)}{\sqrt{nN}},
\end{align*}for some constant $C_1$ which only relies on $\norm{\sigma}_2$ and $\lambda_0$.
Hence, the bounds of \eqref{eq:GCV_1} and \eqref{eq:GCV_2} imply 
\begin{align}
     |\GCV_n^{(\KK,\lambda)}-\GCV_n^{(\RF,\lambda)}|\leq  \frac{ c(1+\lambda^4)\log^{C_{\sigma}}(N)\norm{\by}^2}{\sqrt{nN}} 
\end{align}
for a constant $c$ depending on $\lambda_0$, $\norm{\sigma}_2$, and $\norm{\sigma}_4$. This proves \eqref{eq:GCV_K} for the  GCV concentration.

Now we consider the second result \eqref{eq:CV_K} for LOOCV. 
Similar to the analysis of GCV, with the help of the shortcut formulae \eqref{eq:shortcut} and \eqref{eq:shortcut_N}, we can get
\begin{align}
    \left| \CV_n^{(\KK,\lambda)}- \CV_n^{(\RF,\lambda)}\right|\le~& \frac{\|\by\|^2}{n}\norm{(\bK_{\lambda}^{-1}-\bK_{N,\lambda}^{-1})\bD^{-2}\bK_\lambda^{-1}}+ \frac{\|\by\|^2}{n} \norm{\bK^{-1}_{N,\lambda}(\bD^{-2}-\bD^{-2}_N)\bK_\lambda^{-1}}\\
    &+ \frac{\|\by\|^2}{n} \norm{\bK^{-1}_{N,\lambda} \bD^{-2}_N(\bK_{\lambda}^{-1}-\bK_{N,\lambda}^{-1})}
    \le  \frac{ c(1+\lambda^4)\log^{C_{\sigma}}(N)\norm{\by}^2}{\sqrt{nN}},\label{eq:CV_derive}
\end{align}
with probability at least $1-N^{-2}$. Here, we exploit \eqref{eq:K_lambda_ii}, \eqref{eq:K_lambdaN_ii} and \eqref{eq:D-D_N} in Lemma~\ref{lemm:shortcut}. This verifies the second result \eqref{eq:CV_K} for LOOCV.
\end{proof}

\subsection{Proof of Theorem~\ref{thm:test_diff}}
For simplicity, we denote $\KNl=(\bK_N+\lambda\Id)$, $\Kl=(\bK+\lambda\Id)$, $\KmN:=\bK_N(\bX,\bx)\in\R^n$ and $\Km:=\bK(\bX,\bx)\in\R^n$. Define
\[\KxxN:=\E_\bx[\bK_N(\bX,\bx)\bK_N(\bx,\bX)],~~\Kxx:=\E_\bx[\bK(\bX,\bx)\bK(\bx,\bX)],\]
where we take expectation with respect to test data $\bx$ defined in Assumption~\ref{assump:testdata}.  Recalling Assumption \ref{assump:target}, we have
\[f^*(\bx)=\tau(\langle\bbeta,\bx\rangle),\quad \by=\tau(\bX^\top\bbeta)+\beps,\]
where $\bbeta\sim\cN(0,\Id)$. Denote the kernel given by $\tau$ as $\bPsi:=\E_\bbeta[\tau(\bX^\top\bbeta)\tau(\bbeta^{\top}\bX)]$ and the vector by $\bu:=\E_\bbeta[\tau(\bX^\top\bbeta) f^*(\bx)]\in\R^n$.

We begin with the following lemmas about the bounds and concentrations with respect to $\KxxN$, $\Kxx$, $\bK_{N,\lambda} $,  $\bK_{m,N}$, and $\bK_{m}$.

\begin{lemma}\label{lemm:Kmix_bound}
There exist some constant $C>0$ depending on $\lambda_0, \|\sigma\|_2^2$ such that, with probability at least $1-N^{-1}$, when $n\geq \max \{n_0,n_1\}$,
\[\KmN^\top\KNl^{-1}\KmN< \norm{\sigma}_2^2+ \|\sigma\|_4^2+\lambda,\] when $N/\log^{2C_{\sigma}}(N)>C(1+\lambda^2)n$.
Moreover, we have $$\Km^\top\Kl^{-1}\Km<\norm{\sigma}_2^2+\lambda.$$
\end{lemma}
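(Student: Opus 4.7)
Both bounds will follow from a single Schur complement argument applied to an augmented kernel matrix with the test point appended, combined with a one-line Chebyshev bound in the empirical case. Set $\tilde\bX=[\bX,\bx]\in\R^{d\times(n+1)}$ and form the augmented empirical and population kernels
\[
    \tilde\bK_N \deq \tfrac{1}{N}\sigma(\bW\tilde\bX)^\top\sigma(\bW\tilde\bX),\qquad \tilde\bK \deq \E_\bw[\sigma(\bw^\top\tilde\bX)^\top\sigma(\bw^\top\tilde\bX)].
\]
Both are positive semidefinite with block decompositions
\[
    \tilde\bK_N = \begin{pmatrix}\bK_N & \bK_{m,N}\\ \bK_{m,N}^\top & \tfrac{1}{N}\|\sigma(\bW\bx)\|^2\end{pmatrix},\qquad \tilde\bK = \begin{pmatrix}\bK & \bK_m\\ \bK_m^\top & \|\sigma\|_2^2\end{pmatrix},
\]
the scalar bottom-right entry of $\tilde\bK$ being $\|\sigma\|_2^2$ because $\|\bx\|=1$.

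Invoking the Schur complement of the top-left block of $\tilde\bK_N+\lambda\Id_{n+1}$ and $\tilde\bK+\lambda\Id_{n+1}$, respectively (these top-left blocks are invertible for all $\lambda\ge 0$: the population one by Proposition~\ref{prop:pkrr}, and the empirical one by Corollary~\ref{coro_con} under the width condition $N/\log^{2C_\sigma}(N)>C(1+\lambda^2)n$), I will obtain
\[
    \bK_{m,N}^\top\bK_{N,\lambda}^{-1}\bK_{m,N}\le \tfrac{1}{N}\|\sigma(\bW\bx)\|^2+\lambda,\qquad \bK_m^\top\bK_\lambda^{-1}\bK_m\le \|\sigma\|_2^2+\lambda.
\]
The second inequality already proves the population bound; strict inequality will follow by applying Proposition~\ref{prop:pkrr} to $\tilde\bX$ to conclude $\lambda_{\min}(\tilde\bK)\ge\lambda_0>0$, which forces the Schur complement to be at least $\lambda_0$.

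To upgrade the empirical bound from $\tfrac{1}{N}\|\sigma(\bW\bx)\|^2+\lambda$ to the claimed $\|\sigma\|_2^2+\|\sigma\|_4^2+\lambda$, I will control the bottom-right scalar by a single application of Chebyshev's inequality. Since $\bw_i^\top\bx\sim\cN(0,1)$ (using $\|\bx\|=1$), the variables $\sigma(\bw_i^\top\bx)^2$ are i.i.d.\ with mean $\|\sigma\|_2^2$ and variance at most $\|\sigma\|_4^4$, so
\[
    \Parg{\tfrac{1}{N}\|\sigma(\bW\bx)\|^2\ge \|\sigma\|_2^2+\|\sigma\|_4^2}\le\frac{\|\sigma\|_4^4/N}{\|\sigma\|_4^4}=\frac{1}{N},
\]
and combining with the Schur bound yields the first claim with probability at least $1-N^{-1}$ (after a trivial union bound with the event from Corollary~\ref{coro_con}).

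The only subtle point, and the step I expect to be most delicate, is verifying that Proposition~\ref{prop:pkrr}'s hypothesis \eqref{assump:relaxed_l} holds for the augmented data $\tilde\bX$; this reduces to the identity
\[
    \|(\tilde\bX^\top\tilde\bX)^{\odot(\ell+1)}-\Id\|_F^2 = \|(\bX^\top\bX)^{\odot(\ell+1)}-\Id\|_F^2 + 2\,\|(\bX^\top\bx)^{\odot(\ell+1)}\|_2^2,
\]
both of whose summands are $o_n(1)$ by Assumptions~\ref{assump:asymptotics} and \ref{assump:testdata} once $n\ge\max\{n_0,n_1\}$ (possibly after enlarging these thresholds by absolute constants, which does not affect the asymptotic conclusions). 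Everything else is a routine consequence of the PSD structure of $\tilde\bK_N$ and $\tilde\bK$ together with elementary Chebyshev concentration.
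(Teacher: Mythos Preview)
Your proposal is correct and follows essentially the same route as the paper: augment the data with the test point, use the Schur complement of the $(n+1)\times(n+1)$ kernel $\tilde\bK+\lambda\Id$ (resp.\ $\tilde\bK_N+\lambda\Id$), and control the scalar $\bK_N(\bx,\bx)=\tfrac{1}{N}\|\sigma(\bW\bx)\|^2$ by a second-moment (Chebyshev/Markov) bound. The only difference is cosmetic: the paper applies Proposition~\ref{prop:kernel_con}/Corollary~\ref{coro_con} to the \emph{augmented} kernel to force $\tilde\bK_{N,\lambda}\succ 0$ and hence a strict Schur complement, while you obtain the non-strict Schur inequality from the automatic PSD of the Gram matrix $\tilde\bK_N$ together with invertibility of the unaugmented $\bK_{N,\lambda}$, and recover strictness from the Chebyshev step; this is a slight economy but the two arguments are otherwise identical.
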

\begin{proof}
Consider an enlarged block matrix $\tilde\bK\in\R^{(n+1)\times (n+1)}$ defined by
\begin{equation}\label{eq:tilde_K}
    \tilde\bK:=\begin{pmatrix}
\bK & \Km\\
\Km^\top & \bK(\bx,\bx)
\end{pmatrix},
\end{equation}
where $\bK(\bx,\bx)=\E[\sigma(\w^\top\bx)(\sigma(\w^\top \bx)]=\norm{\sigma}_2^2$. Let $\tKl:=\tilde\bK+\lambda\Id $. Analogously to \eqref{def:Phi0}, let us define
\begin{align}
     \tilde\bK_\ell:=\sum_{k=0}^\ell \zeta_k^2(\sigma)(\tilde\bX^\top \tilde\bX)^{\odot k}+\sigma_{>\ell}^2\Id\in\R^{(n+1)\times (n+1)},\label{eq:tilde_K_l}
 \end{align}
where $\tilde\bX=[\bX,\bx]\in\R^{d\times (n+1)}$ is the concatenation of training and test data points. By Assumption~\ref{assump:testdata}, analogously to the proof of  Proposition~\ref{prop:pkrr}, we have 
\begin{align}\label{eq:tilde_K-tilde_K_l}
    \norm{\tilde\bK_\ell-\tilde\bK}^2 &\le  \norm{\tilde\bK_\ell-\tilde\bK}_F^2\\
    &\le 2\|\sigma\|_4^4 \left\| \left(\tilde \bX^\top  \tilde \bX\right)^{\odot \ell+1}-\Id\right\|_F^2\\
    &=2\|\sigma\|_4^4 \left\| \left( \bX^\top   \bX\right)^{\odot \ell+1}-\Id\right\|_F^2+ 4\|\sigma\|_4^4\left\| (\bX^\top \bx)^{\odot (\ell+1)}\right\|_2^2 \leq \frac{3}{8} \lambda_0^2.
\end{align}
 Since $\lambda_{\min}(\tilde\bK)\ge \lambda_0>0$, we have $\lambda_{\min}(\tKl)\geq \frac{1}{4}\lambda_0$ and $\tKl$ is positive definite for any $\lambda\ge 0$. By Theorem 7.7.7 of \cite{horn2012matrix}, since both $\bK_{\lambda}$ and $\tilde{\bK}_{\lambda}$ are positive definite, the Schur complement of $\tilde{\bK}_{\lambda}$ given by $\bK(\bx,\bx)+\lambda-\Km^\top\Kl^{-1}\Km$ is positive, which concludes our second result in this lemma.

Similarly, consider the block matrix $\tilde\bK_N\in\R^{(n+1)\times (n+1)}$ defined by 
\[\tilde\bK_N:=\bK_N(\tilde\bX,\tilde\bX)=\begin{pmatrix}
 \bK_N & \KmN\\
\KmN^\top & \bK_N(\bx,\bx) 
\end{pmatrix}.\]
Let $\tKNl:=\lambda\Id+ \tilde\bK_N$. Combining Assumption \ref{assump:testdata} and \eqref{eq:tilde_K-tilde_K_l}, we can easily ensure Proposition~\ref{prop:kernel_con} and Corollary~\ref{coro_con} still hold for $\tKl$ and $\tKNl$. Therefore, with probability at least $1-N^{-2}$, $\lambda_{\min}(\tKNl)\ge \frac{1}{2}\lambda_0$, for sufficiently large constant $C>0$ with $N/\log^{2C_{\sigma}}(N)>C(1+\lambda^2) n$, which implies that $\tKNl $ is positive definite with  probability $1-N^{-2}$. Again, from Theorem 7.7.7 of \cite{horn2012matrix}, we can get $\bK_N(\bx,\bx)+\lambda-\KmN^\top\KNl^{-1}\KmN>0$ with probability at least $1-N^{-2}$. Thus, 
\begin{align*}
    0\le \KmN^\top\KNl^{-1}\KmN<\bK_N(\bx,\bx)+\lambda.
\end{align*}
Notice that $\bK_N(\bx,\bx)=\frac{1}{N}\sum_{i=1}^N\sigma(\bw_i^\top\bx)^2$, $\E[\bK_N(\bx,\bx)]=\norm{\sigma}_2^2,$ and 
\begin{align}
    \E\left(\bK_N(\bx,\bx)-\norm{\sigma}_2^2\right)^2=\frac{1}{N}\Var (\sigma(\bw^\top\bx)^2)=\frac{\norm{\sigma}_4^4-\norm{\sigma}_2^4}{N}\le\frac{\norm{\sigma}_4^4}{N},
\end{align}where $\bw\sim\cN(0,\Id)$. Therefore, by Markov inequality, we conclude that, with probability at least $1-1/N$, $\bK_N(\bx,\bx)\le \norm{\sigma}_2^2+ \|\sigma\|_4^2$.
Therefore, with probability at least $1-N^{-1}$, we have $\KmN^\top\KNl^{-1}\KmN<\norm{\sigma}_2^2+ \|\sigma\|_4^2+\lambda.$
\end{proof}
\begin{lemma}\label{lemm:KPsiK}
Suppose that, for any $0\le k\le \ell,$ if $\zeta_k(\sigma)\neq 0$, then $\zeta_k(\tau)\neq 0$. Then, there exists a universal constant $C>0$ only depending on $\sigma$ and $\tau$ such that for any $\lambda\ge 0$ and $n\geq n_0$,
\[\norm{\Kl^{-1/2}\bPsi\Kl^{-1/2}}\le C,\]

\end{lemma}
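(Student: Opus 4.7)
The plan is to establish a Loewner-order bound $\bPsi \preceq C\,\Kl$ for a constant $C$ depending only on $\sigma,\tau,\lambda_0$ (and independent of $n$ and $\lambda\ge 0$), from which $\|\Kl^{-1/2}\bPsi\Kl^{-1/2}\| \le C$ follows by sandwiching. The starting point is the Hermite expansion
\begin{equation}
\bPsi = \sum_{k=0}^{\infty}\zeta_k^2(\tau)(\bX^\top\bX)^{\odot k},\qquad \bK = \sum_{k=0}^{\infty}\zeta_k^2(\sigma)(\bX^\top\bX)^{\odot k},
\end{equation}
which follows from the same orthogonality relation \eqref{eq:orthogonal_relation} used for $\bK$ in Section~\ref{sec:approx_kernel}. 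I would split $\bPsi=\bPsi_{\le\ell}+\bPsi_{>\ell}$ along $k\le\ell$ versus $k\ge\ell+1$ and treat the two pieces separately.

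For $\bPsi_{\le\ell}$, the Hermite-support compatibility hypothesis at degrees $0,\ldots,\ell$ lets me define the finite constant $C_1=\max\{\zeta_k^2(\tau)/\zeta_k^2(\sigma):0\le k\le\ell,\ \zeta_k(\tau)\neq 0\}$, depending only on $\sigma,\tau$. Since each tensor Gram matrix $(\bX^\top\bX)^{\odot k}$ is PSD,
\begin{equation}
\bPsi_{\le\ell}\preceq C_1\sum_{k=0}^{\ell}\zeta_k^2(\sigma)(\bX^\top\bX)^{\odot k}\preceq C_1\,\bK\preceq C_1\,\Kl,
\end{equation}
the middle step discarding the PSD contribution of degrees $k>\ell$ in the expansion of $\bK$.

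For $\bPsi_{>\ell}$, I would separate its diagonal $\bigl(\sum_{k\ge\ell+1}\zeta_k^2(\tau)\bigr)\Id \preceq \|\tau\|_2^2\,\Id$ from its off-diagonal part $\bA$, and then repeat the Cauchy--Schwarz computation from the proof of Proposition~\ref{prop:pkrr} with $\sigma$ replaced by $\tau$: using $\max_{i\neq j}|\langle \bx_i,\bx_j\rangle|\le 1/\sqrt{2}$ (from the definition of $n_0$),
\begin{equation}
\|\bA\|\le\|\bA\|_F\le\sqrt{2}\,\|\tau\|_4^2\,\bigl\|(\bX^\top\bX)^{\odot(\ell+1)}-\Id\bigr\|_F.
\end{equation}
Under Assumption~\ref{assump:asymptotics} the right-hand side is $o_n(1)$, hence uniformly bounded for $n\ge n_0$. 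Combining, $\bPsi_{>\ell}\preceq C_2\,\Id$ for some $C_2=C_2(\sigma,\tau)$, and Proposition~\ref{prop:pkrr} yields $\Id\preceq\lambda_0^{-1}\bK\preceq\lambda_0^{-1}\Kl$. Summing the two pieces gives $\bPsi\preceq(C_1+C_2/\lambda_0)\Kl$, which is the desired bound.

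The main obstacle is the low-degree piece: terms such as $\zeta_0^2(\tau)\mathbf{1}\mathbf{1}^\top$ have operator norm growing linearly in $n$, so they cannot be absorbed into any $n$-independent multiple of $\Id$ and must instead be absorbed into $\bK$ itself, which is precisely where the compatibility of the Hermite supports of $\tau$ and $\sigma$ at degrees $\le\ell$ is essential. The high-degree tail is a direct adaptation of Proposition~\ref{prop:pkrr} requiring no new ingredients, and the resulting constant $C$ is uniform in $\lambda\ge 0$ since we only invoke $\Kl\succeq\bK\succeq\lambda_0\,\Id$ on the lower side.
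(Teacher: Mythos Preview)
Your proposal is correct and matches the paper's approach: compare the degree-$\le\ell$ part of $\bPsi$ to that of $\bK$ via the Hermite-coefficient ratio, and control the degree-$>\ell$ tail in operator norm by repeating the Proposition~\ref{prop:pkrr} computation with $\tau$ in place of $\sigma$. The paper routes the first step through the intermediate truncation $\bK_{\ell,\lambda}$ (showing $\bPsi_\ell\preceq C\bK_{\ell,\lambda}$ and then $\Kl^{-1/2}\bK_{\ell,\lambda}\Kl^{-1/2}\preceq\tfrac{3}{2}\Id$) whereas you bound $\bPsi\preceq C\Kl$ directly, but this is purely organizational; note, however, that both your definition of $C_1$ and the paper's Loewner claim $\bPsi_\ell\preceq C\bK_{\ell,\lambda}$ actually require the reverse implication $\zeta_k(\tau)\neq 0\Rightarrow\zeta_k(\sigma)\neq 0$, which appears to be a typo in the lemma's hypothesis.
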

\begin{proof}
Analogously to \eqref{def:Phi0}, we define a truncation version of the kernel $\bPsi$ by
\begin{align}
     \bPsi_\ell:=\sum_{k=0}^\ell \zeta_k^2(\tau)\left(\bX^\top \bX\right)^{\odot k}+\tau_{>\ell}^2\Id. \label{eq:Psi_ell}
 \end{align}
 Define $\bK_{\ell,\lambda}:=\bK_{\ell}+\lambda\Id$. By the assumption and definition of $\bK_\ell$, there exists some constant $C>0$ such that $\bPsi_\ell\preccurlyeq C\bK_{\ell,\lambda}$ for any $\lambda\ge 0$. Here this constant $C$ only relies on the Hermite coefficients $\zeta_k(\tau)$, $\lambda_0$ and $\zeta_k(\sigma)$ for $0\le k\le \ell$.
 Next, applying Proposition~\ref{prop:pkrr} for nonlinear function $\tau$, we have
 \begin{equation}\label{eq:Phil_approx}
     \norm{\bPsi-\bPsi_\ell}\le \sqrt{2} \|\tau\|_4^2 \left\| \left(\bX^\top \bX\right)^{\odot \ell+1}-\Id\right\|_F\leq \frac{\sqrt 2 \sigma_{>\ell}^2 \|\tau\|_4^2}{4 \|\sigma\|_4^2}.
 \end{equation}
 Proposition~\ref{prop:pkrr} also indicates that  $\norm{\bK-\bK_\ell}\le \frac{1}{2}\lambda_0$. This implies that 
 \begin{equation}
     \Kl^{-1/2}\bK_{\ell,\lambda} \Kl^{-1/2}\preccurlyeq \frac{3}{2}\Id,
 \end{equation} for any $\lambda\ge 0$.
 Then, for any $\lambda\ge 0$, we can estimate its contribution by
 \begin{align*}
     \norm{\Kl^{-1/2}\bPsi\Kl^{-1/2}}\le~& \norm{\Kl^{-1/2}(\bPsi-\bPsi_\ell)\Kl^{-1/2}}+\norm{\Kl^{-1/2}\bPsi_\ell\Kl^{-1/2}}\\
     \le~& \lambda_0^{-1}\norm{\bPsi-\bPsi_\ell}+\norm{\Kl^{-1/2}\bK_{\ell,\lambda}^{1/2}\bK_{\ell,\lambda}^{-1/2}\bPsi_\ell\bK_{\ell,\lambda}^{-1/2}\bK_{\ell,\lambda}^{1/2}\Kl^{-1/2}}\\
     \le~ & \frac{\sqrt 2 \sigma_{>\ell}^2 \|\tau\|_4^2}{4\lambda_0 \|\sigma\|_4^2}+ \norm{\bK_{\ell,\lambda}^{1/2}\Kl^{-1/2}}^2\norm{\bK_{\ell,\lambda}^{-1/2}\bPsi_\ell\bK_{\ell,\lambda}^{-1/2}}\\
     \le~ & \frac{\sqrt 2 \sigma_{>\ell}^2 \|\tau\|_4^2}{4\lambda_0 \|\sigma\|_4^2}+\frac{3}{2}C.
 \end{align*}
 Therefore,  there is a constant depending only on $\sigma,\tau$ as the upper bound for $ \norm{\Kl^{-1/2}\bPsi\Kl^{-1/2}}$. This completes the proof of this lemma.
\end{proof}

\begin{lemma}\label{lemm:KPsiK_vector}
There exists a constant $C>0$ depending only on $\sigma,\tau$ such that for any $\lambda\ge 0$ and  $n\geq \max \{n_0,n_1\}$,
\begin{equation}
    \norm{\bK_\lambda^{-\frac{1}{2}}\bu}=\norm{\E_{\bbeta}[\tau(\bbeta^\top\bx)\tau(\bbeta^\top\bX)]\bK_\lambda^{-\frac{1}{2}}}\le C(1+\lambda^{1/2}),
\end{equation}
\end{lemma}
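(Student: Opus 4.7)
The plan is to reinterpret $\bu$ as a cross-kernel vector for the kernel induced by $\tau$, control $\bu^\top(\bPsi+\lambda\Id)^{-1}\bu$ via Schur complement on an extended Gram matrix, and then transfer the bound to $\bK_\lambda^{-1}$ via the operator dominance established in Lemma \ref{lemm:KPsiK}.

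First, applying the Hermite orthogonality relation \eqref{eq:orthogonal_relation} with $\bw$ replaced by $\bbeta$ and $\sigma$ replaced by $\tau$, I would identify entrywise
\[
\bu_i = \E_\bbeta[\tau(\bbeta^\top\bx_i)\tau(\bbeta^\top\bx)] = \sum_{k\ge 0}\zeta_k^2(\tau)\langle\bx_i,\bx\rangle^k,
\]
which is precisely $\bPsi(\bx_i,\bx)$ for the kernel $\bPsi$ appearing in Lemma \ref{lemm:KPsiK}; likewise $\bPsi(\bx,\bx)=\norm{\tau}_2^2$ since $\norm{\bx}=1$ makes $\bbeta^\top\bx\sim\cN(0,1)$. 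I would then form the $(n+1)\times(n+1)$ Gram matrix
\[
\tilde\bPsi := \begin{pmatrix}\bPsi & \bu \\ \bu^\top & \norm{\tau}_2^2\end{pmatrix} = \E_\bbeta\!\left[\tau(\tilde\bX^\top\bbeta)\,\tau(\bbeta^\top\tilde\bX)\right]\succcurlyeq 0,
\]
where $\tilde\bX=[\bX,\bx]$. For $\lambda>0$, adding $\lambda\Id_{n+1}$ makes $\tilde\bPsi+\lambda\Id$ positive definite, and applying the Schur complement (exactly as in the proof of Lemma \ref{lemm:Kmix_bound}) yields
\[
\bu^\top(\bPsi+\lambda\Id)^{-1}\bu\ \le\ \norm{\tau}_2^2+\lambda.
\]

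Next, I would invoke Lemma \ref{lemm:KPsiK} at the special value $\lambda=0$, which gives $\bPsi\preccurlyeq C\bK$ for some $C$ depending only on $\sigma,\tau$. Consequently $\bPsi+\lambda\Id\preccurlyeq C\bK+\lambda\Id\preccurlyeq (C+1)\bK_\lambda$, and inverting this operator inequality gives $\bK_\lambda^{-1}\preccurlyeq (C+1)(\bPsi+\lambda\Id)^{-1}$. Combining with the Schur bound above,
\[
\bu^\top\bK_\lambda^{-1}\bu\ \le\ (C+1)\bigl(\norm{\tau}_2^2+\lambda\bigr),
\]
and taking square roots together with $\sqrt{a+b}\le\sqrt{a}+\sqrt{b}$ yields $\norm{\bK_\lambda^{-1/2}\bu}\le C'(1+\lambda^{1/2})$ for a constant $C'>0$ depending only on $\sigma,\tau$ (using $\norm{\tau}_2<\infty$ from $\tau\in L^4$ in Assumption \ref{assump:target}).

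The one delicate step is the case $\lambda=0$: the Schur inequality and the operator inversion both require strict positivity, and $\bPsi$ itself need not be invertible. I would handle this by a limiting argument, applying the displayed inequality for $\lambda>0$ and letting $\lambda\to 0^+$. The left side $\bu^\top\bK_\lambda^{-1}\bu$ converges to $\bu^\top\bK^{-1}\bu$ because $\bK\succcurlyeq\lambda_0\Id$ is strictly positive definite by Proposition \ref{prop:pkrr}, while the right side tends to $(C+1)\norm{\tau}_2^2$. An equivalent route is to observe that $\tilde\bPsi\succcurlyeq 0$ forces $\bu\in\operatorname{range}(\bPsi)$, so the Schur inequality remains valid with the Moore--Penrose pseudo-inverse $\bPsi^{\dagger}$ in place of $(\bPsi+\lambda\Id)^{-1}$ at $\lambda=0$. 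This is the only non-routine point in the argument; everything else is a direct application of kernel PSD-ness and Lemma \ref{lemm:KPsiK}.
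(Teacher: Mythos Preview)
Your proposal is correct and follows essentially the same approach as the paper: both form the extended $(n+1)\times(n+1)$ kernel $\tilde\bPsi$ for $\tau$, use the Schur complement to obtain $\bu^\top(\bPsi+\lambda\Id)^{-1}\bu\le\norm{\tau}_2^2+\lambda$, and then transfer to $\bK_\lambda^{-1}$ via the operator dominance coming from Lemma~\ref{lemm:KPsiK}. The only cosmetic difference is that the paper writes the transfer as $\bu^\top\bPsi_\lambda^{-1/2}(\bPsi_\lambda^{1/2}\bK_\lambda^{-1}\bPsi_\lambda^{1/2})\bPsi_\lambda^{-1/2}\bu$ and bounds $\norm{\bPsi_\lambda^{1/2}\bK_\lambda^{-1}\bPsi_\lambda^{1/2}}$ directly, whereas you deduce $\bK_\lambda^{-1}\preccurlyeq(C+1)(\bPsi+\lambda\Id)^{-1}$ from $\bPsi\preccurlyeq C\bK$; your limiting argument for $\lambda=0$ is in fact slightly cleaner than the paper's direct positive-definiteness claim on $\bPsi$.
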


\begin{proof}
 
Denote $\bK_{\tau,m}:=\E_{\bbeta}[\tau(\bbeta^\top\bx)\tau(\bbeta^\top\bX)]^\top$ and $\bPsi_{\lambda}:=\bPsi+\lambda\Id$. Analogously to \eqref{eq:tilde_K} and \eqref{eq:tilde_K_l}, we can consider
\begin{equation}
\tilde\bPsi_{\lambda}=\E_\bbeta[\tau(\bbeta^\top\tilde\bX)^\top\tau(\bbeta^\top\tilde\bX)]+\lambda\Id=\begin{pmatrix}
    \bPsi_{\lambda} & \bK_{\tau,m}\\
    \bK_{\tau,m}^\top & \E_{\bbeta}[\tau(\bbeta^\top\bx)^2]+\lambda
    \end{pmatrix}
\end{equation} where $\tilde\bX=[\bX,\bx]$. For any $\lambda\geq 0$,
 both $\tilde\bPsi$ and $\bPsi$ are positive definite because of \eqref{eq:Phil_approx} and \eqref{eq:Psi_ell}. Following the proof of Lemma~\ref{lemm:Kmix_bound}, we can similarly derive that the Schur complement $\E_{\bbeta}[\tau(\bbeta^\top\bx)^2]+\lambda-\bK_{\tau,m}^\top\bPsi^{-1}_{\lambda}\bK_{\tau,m}$ is positive, where $\E_{\bbeta}[\tau(\bbeta^\top\bx)^2]=\norm{\tau}_2^2$. Therefore, we have
\begin{align}
    &\norm{\E_{\bbeta}[\tau(\bbeta^\top\bx)\tau(\bbeta^\top\bX)]\bK_\lambda^{-\frac{1}{2}}}^2=\bK_{\tau,m}^\top\bK_\lambda^{-1}\bK_{\tau,m}\\
    =~& \bK_{\tau,m}^\top\bPsi_{\lambda}^{-\frac{1}{2}}\bPsi_{\lambda}^{\frac{1}{2}}\bK_\lambda^{-1}\bPsi^{\frac{1}{2}}_{\lambda}\bPsi^{-\frac{1}{2}}_{\lambda}\bK_{\tau,m}\\
    \le~ & \bK_{\tau,m}^\top\bPsi^{-1}_{\lambda}\bK_{\tau,m}\cdot \norm{\bPsi^{\frac{1}{2}}_{\lambda}\bK_\lambda^{-1}\bPsi^{\frac{1}{2}}_{\lambda}}\le (\lambda+\norm{\tau}_2^2)\norm{\bPsi^{\frac{1}{2}}_{\lambda}\bK_\lambda^{-1}\bPsi^{\frac{1}{2}}_{\lambda}}.
\end{align}
Additionally, following the same proof of Lemma~\ref{lemm:KPsiK}, we can also obtain $\norm{\bPsi_{\lambda}^{\frac{1}{2}}\bK_\lambda^{-1}\bPsi_{\lambda}^{\frac{1}{2}}}\le C$, for some constant $C>0$ which only depends on $\sigma,\tau$. This concludes the proof.
\end{proof}

The following lemma is analogous to Lemma 5 in \cite{montanari2020interpolation}, which addresses the concentrations of $\bK_N^{(2)}$ and $f^*(\bx)\bK_N(\bX,\bx)$ respectively.
\begin{lemma}\label{lemm:deltai}
Suppose that the assumptions of Theorem~\ref{thm:test_diff} hold. 
For any $\lambda\ge 0$, define
\begin{align*}
     \delta_1:=&  \E_{\bbeta, \varepsilon}\left[\by^\top\Kl^{-1} \left(\KxxN-\Kxx\right)\Kl^{-1}\by\right],\\
    \delta_{2}:= & \E_{\bbeta,\bx}\left[f^*(\bx)\left(\bK_N(\bx,\bX)-\bK(\bx,\bX)\right)\Kl^{-1}f^*(\bX)\right].
\end{align*}
Then, for sufficiently large $n$,  with probability at least $1-\frac{1}{2}\log^{C_\sigma} (N)$, when $n\geq \max \{n_0,n_1\}$,  there exists some constant $C>0$ depending only on $\sigma,\tau$ such that
\begin{align}
    |\delta_1|\le & C(1+\lambda)\log^{C_\sigma}(N) \sqrt{\frac{n}{N}},    \label{eq:bound_delta1} \\ 
    |\delta_2|\le & C(1+\lambda)\log^{C_\sigma}(N) \sqrt{\frac{n}{N}}.   \label{eq:bound_delta2}
\end{align}

\end{lemma}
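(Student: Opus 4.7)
The plan is to reduce both quantities to $\bx$-expectations of bilinear forms controlled by the off-diagonal deviation $\bg_N(\bx):=\bK_N(\bX,\bx)-\bK(\bX,\bx)\in\R^n$, then bound these using the enlarged-kernel concentration of Proposition~\ref{prop:kernel_con} applied to $\tilde\bX=[\bX,\bx]$ together with Lemmas~\ref{lemm:Kmix_bound}--\ref{lemm:KPsiK_vector}.

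For $\delta_1$, I would first integrate out $\bbeta$ and $\beps$ using Assumption~\ref{assump:target} and the independence of $\beps$ to reduce $\E_{\bbeta,\varepsilon}[\by\by^\top]$ to $\bPsi+\sigma_\varepsilon^2\Id$. The product-rule decomposition
\[
\KxxN-\Kxx=\E_\bx\bigl[\bg_N(\bx)\bK_N(\bx,\bX)+\bK(\bX,\bx)\bg_N(\bx)^\top\bigr],
\]
combined with cyclicity of trace, rewrites $\delta_1$ as an $\bx$-expectation of two scalar bilinear forms of shape $\bv_1(\bx)^\top\bK_\lambda^{-1}(\bPsi+\sigma_\varepsilon^2\Id)\bK_\lambda^{-1}\bv_2(\bx)$ with $\{\bv_1,\bv_2\}=\{\bg_N(\bx),\bK_N(\bX,\bx)\}$ or $\{\bg_N(\bx),\bK(\bX,\bx)\}$. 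Cauchy--Schwarz in the $\bK_\lambda^{-1}$-norm bounds each summand pointwise in $\bx$ by $\|\bK_\lambda^{-1/2}\bg_N(\bx)\|$ times $O(\sqrt{1+\lambda})$ (from Lemma~\ref{lemm:Kmix_bound} applied to $\bK(\bX,\bx)^\top\bK_\lambda^{-1}\bK(\bX,\bx)$ and to $\bK_N(\bX,\bx)^\top\bK_{N,\lambda}^{-1}\bK_N(\bX,\bx)$, transferred between $\bK_\lambda$ and $\bK_{N,\lambda}$ via Corollary~\ref{coro_con}) times an $O(1)$ central factor (by Lemma~\ref{lemm:KPsiK} together with $\sigma_\varepsilon^2\|\bK_\lambda^{-1}\|\le\sigma_\varepsilon^2/\lambda_0$). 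For $\delta_2$, integrating out $\bbeta$ yields $\delta_2=\E_\bx[\bg_N(\bx)^\top\bK_\lambda^{-1}\bu(\bx)]$ with $\bu(\bx)=\E_\bbeta[f^*(\bx)\tau(\bX^\top\bbeta)]$, and Cauchy--Schwarz combined with Lemma~\ref{lemm:KPsiK_vector} plays the role of the second-factor bound.

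Both reductions therefore hinge on controlling $\E_\bx\bigl[\bg_N(\bx)^\top\bK_\lambda^{-1}\bg_N(\bx)\bigr]$ with high probability over $\bW$. For every fixed $\bx$ obeying \eqref{assumption:test_orthogonal}, the enlarged matrix $\tilde\bK$ satisfies the hypothesis of Proposition~\ref{prop:kernel_con} (via the argument already used in Lemma~\ref{lemm:Kmix_bound}), so with $\bW$-probability at least $1-N^{-2}$,
\[
\bigl\|\tilde\bK_\lambda^{-1/2}(\tilde\bK_N-\tilde\bK)\tilde\bK_\lambda^{-1/2}\bigr\|\le t,\qquad t:=C\log^{C_\sigma}(N)\sqrt{n/N}.
\]
Since $\bg_N(\bx)$ is the upper-right $n\times 1$ off-diagonal block of $\tilde\bK_N-\tilde\bK$, testing this spectral inequality against the padded vector whose first $n$ entries are $\bK_\lambda^{-1}\bg_N(\bx)$ and whose last entry is $0$, and against the last standard basis vector of $\R^{n+1}$ (for which $\tilde\bK_\lambda$ acts as multiplication by $\|\sigma\|_2^2+\lambda$), yields the pointwise estimate $\bg_N(\bx)^\top\bK_\lambda^{-1}\bg_N(\bx)\le t^2(\|\sigma\|_2^2+\lambda)$.

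The main obstacle is that the good event over $\bW$ depends on $\bx$, so the pointwise bound does not integrate directly. The resolution is Fubini combined with Markov's inequality: on the complementary $\bW$-event of probability $N^{-2}$, a crude a~priori bound on $\bg_N(\bx)^\top\bK_\lambda^{-1}\bg_N(\bx)$ (derived from $\|\bK_\lambda^{-1}\|\le\lambda_0^{-1}$ and moment bounds on $\|\bK_N(\bX,\bx)\|$ obtained from Assumption~\ref{assump:sigma}) makes the bad-event contribution absorbable into the main term. Integrating first in $\bx$ then in $\bW$ therefore gives
\[
\E_\bW\E_\bx\bigl[\bg_N(\bx)^\top\bK_\lambda^{-1}\bg_N(\bx)\bigr]\le C(1+\lambda)\log^{2C_\sigma}(N)(n/N),
\]
and Markov's inequality applied to the nonnegative function $\E_\bx[\bg_N(\bx)^\top\bK_\lambda^{-1}\bg_N(\bx)]$ of $\bW$ converts this into a bound of order $(1+\lambda)\log^{3C_\sigma}(N)(n/N)$ with probability at least $1-\tfrac12\log^{-C_\sigma}(N)$. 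Taking square roots and plugging back into the Cauchy--Schwarz estimates above delivers both \eqref{eq:bound_delta1} and \eqref{eq:bound_delta2}, after absorbing the $3C_\sigma$ into the free exponent $C_\sigma$ allowed in the statement.
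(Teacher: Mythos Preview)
Your approach is essentially correct but follows a genuinely different route from the paper. The paper never invokes the enlarged-kernel concentration; instead it splits $\delta_1=\delta_{1,1}+\delta_{1,2}$ with $\delta_{1,1}=\E_{\bbeta,\varepsilon,\bx}\bigl[\bv^\top\bg_N(\bx)\bg_N(\bx)^\top\bv\bigr]$ and $\delta_{1,2}=2\E_{\bbeta,\varepsilon,\bx}\bigl[\bv^\top\bg_N(\bx)\Km^\top\bv\bigr]$ (where $\bv=\bK_\lambda^{-1}\by$), and then computes the $\bW$-moments of each piece \emph{from scratch} using the i.i.d.\ row structure of $\bW$: since $\bg_N(\bx)=\tfrac{1}{N}\sum_{i}\bigl[\sigma(\bw_i^\top\bx)\sigma(\bX^\top\bw_i)-\Km\bigr]$ is a centered i.i.d.\ sum, the cross terms in $\E_\bW[\delta_{1,1}]$ and in $\E_\bW[\delta_{1,2}^2]$ vanish, leaving a single $1/N$ factor multiplied by a fourth-moment quantity bounded through Lemmas~\ref{lemm:KPsiK}--\ref{lemm:KPsiK_vector}. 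A single Markov/Chebyshev step then supplies exactly one $\log^{C_\sigma}(N)$ factor. The same pattern handles $\delta_2$ via $\E_\bW[\delta_2^2]$. Your route, by contrast, treats Proposition~\ref{prop:kernel_con} on $\tilde\bX=[\bX,\bx]$ as a black box, extracts the off-diagonal block to get a pointwise-in-$\bx$ bound on $\bg_N(\bx)^\top\bK_\lambda^{-1}\bg_N(\bx)$, and then handles the $\bx$-dependent good event by Fubini-plus-Markov. This is more modular (it recycles the main concentration lemma rather than redoing a variance computation) and the block-extraction trick is clean; the paper's direct moment computation is sharper and avoids re-entering the truncation machinery hidden in Proposition~\ref{prop:kernel_con}.

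One quantitative caveat: your final claim that the exponent $3C_\sigma$ (or $\tfrac32 C_\sigma$ after the square root) can be ``absorbed into the free exponent $C_\sigma$ allowed in the statement'' is not right, because $C_\sigma$ is the \emph{fixed} growth exponent from Assumption~\ref{assump:sigma}, not an adjustable constant. Proposition~\ref{prop:kernel_con} already burns one $\log^{C_\sigma}(N)$ in $t$, the square $t^2$ doubles it, and your last Markov step adds another, so your argument delivers $\log^{3C_\sigma/2}(N)$ rather than $\log^{C_\sigma}(N)$. This is a minor discrepancy in the logarithmic prefactor, not a structural gap; the $\sqrt{n/N}$ rate, the $(1+\lambda)$ dependence, and the probability bound are all correctly recovered by your scheme.
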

\begin{proof} Let $\bv:=\Kl^{-1}\by$ and $\tilde{g}(\bx):=\Km^\top\bv$.
Notice that $\delta_1=\delta_{1,1}+\delta_{1,2}$, where
\begin{align*}
    \delta_{1,1}:=~&\E_{\bbeta, \varepsilon}[\bv^\top \E_\bx\left[(\KmN-\Km)(\KmN-\Km)^\top\right]\bv],\\
    \delta_{1,2}:=~&2\E_{\bbeta, \varepsilon}[\bv^\top \E_\bx\left[(\KmN-\Km)\Km^\top\right]\bv]=2\E_{\bbeta, \varepsilon,\bx}[\bv^\top(\KmN-\Km)\tilde{g}(\bx)],
\end{align*} Taking expectation with respect to $\bW$, we can obtain
\begin{align}
    0\le\E[\delta_{1,1}]=~&\frac{1}{N^2}\sum_{i,j=1}^N \E_{\bbeta, \varepsilon}\left[\bv^\top \E_{\bW,\bx}\left[\left(\sigma(\bw_i^\top\bx)\sigma(\bw_i^\top\bX)^\top-\Km\right)\left(\sigma(\bw_j^\top\bx)\sigma(\bw_j^\top\bX)-\Km^\top\right)\right]\bv\right]\\
    =~&\frac{1}{N^2}\sum_{i=1}^N  \E_{\bbeta, \varepsilon}\left[\bv^\top \E_{\bW,\bx}\left[\left(\sigma(\bw_i^\top\bx)\sigma(\bw_i^\top\bX)^\top-\Km\right)\left(\sigma(\bw_i^\top\bx)\sigma(\bw_i^\top\bX)-\Km^\top\right)\right]\bv\right]\\
     =~&\frac{1}{N^2}\sum_{i=1}^N  \E_{\bbeta, \varepsilon}\left[\bv^\top \E_{\bW,\bx}\left[\sigma(\bw_i^\top\bx)^2\sigma(\bw_i^\top\bX)^\top\sigma(\bw_i^\top\bX)-\Km\Km^\top\right]\bv\right]\\
     \le~ &\frac{1}{N}  \E_{\bbeta, \varepsilon}\left[\bv^\top \E_{\bw,\bx}\left[\sigma(\bw^\top\bx)^2\sigma(\bw^\top\bX)^\top\sigma(\bw^\top\bX)\right]\bv\right],\label{eq:delta1_first}
\end{align}
where in the last line we apply the fact $\Km=\E_{\bw_i}[\sigma(\bw_i^\top\bx)\sigma(\bw_i^\top\bX)^\top]\in\R^n$ for any $i$-th row of $\bW$. 

Furthermore, by applying Lemma~\ref{lemm:KPsiK} and Cauchy–Schwartz inequality, we have
\begin{align}
    &\E_{\bbeta, \varepsilon}\left[\bv^\top \E_{\bw,\bx}\left[\sigma(\bw^\top\bx)^2\sigma(\bw^\top\bX)^\top\sigma(\bw^\top\bX)\right]\bv\right]\\
    =~&\Tr\left(\Kl^{-1}\E_{\bw,\bx}\left[\sigma(\bw^\top\bx)^2\sigma(\bw^\top\bX)^\top\sigma(\bw^\top\bX)\right]\Kl^{-1} \left(\bPsi+\sigma^2_\eps\Id\right)\right)\\
    =~& \E_{\bw,\bx}\left[\sigma(\bw^\top\bx)^2\sigma(\bw^\top\bX)\Kl^{-1}\left(\bPsi+\sigma^2_\eps\Id\right)\Kl^{-1}\sigma(\bw^\top\bX)^\top\right]\\
    \le~ & \norm{\sigma}_4^2\E_{\bw,\bx}\left[\norm{\sigma(\bw^\top\bX)}^4\norm{\Kl^{-1}\left(\bPsi+\sigma^2_\eps\Id\right)\Kl^{-1}}^2\right]^{\frac{1}{2}}\\
    \le~ &  \norm{\sigma}_4^2\cdot\frac{1}{\lambda_0}\left(C+\frac{\sigma^2_\eps}{\lambda_0}\right)\E_{\bw}\left[\norm{\sigma(\bw^\top\bX)}^4\right]^{\frac{1}{2}}\\
    = ~& \norm{\sigma}_4^2\cdot\frac{1}{\lambda_0}\left(C+\frac{\sigma^2_\eps}{\lambda_0}\right)\E_{\bw}\left[\left(\sum_{i=1}^n\sigma(\bw^\top\bx_i)^2\right)^2\right]^{\frac{1}{2}}\\
    \le &\norm{\sigma}_4^4\cdot\frac{1}{\lambda_0}\left(C+\frac{\sigma^2_\eps}{\lambda_0}\right)\cdot n\label{eq:delta1_second}
\end{align}where $\bw\sim \cN(0,\Id)$ is independent of $\bx$ and $ \norm{\sigma}_4^4=\E_{\bw,\bx}[\sigma(\bw^\top\bx)^4]$. Therefore, combining \eqref{eq:delta1_first} and \eqref{eq:delta1_second}, we can conclude that
\begin{equation}
    \E[|\delta_{1,1}|]\le C_{1,1}\frac{n}{N},
\end{equation}for some constant $C_{1,1}>0$ which only relies on $\sigma$ and $\sigma_{\eps}$. Then, Markov inequality deduces that for sufficiently large $n$,
\begin{equation}\label{eq:delta_11_prob}
    \Parg{|\delta_{1,1}|>8C_{1,1}\log ^{C_\sigma}(N)\frac{n}{N}}\le \frac{1}{8\log^{C_\sigma} (N)}.
\end{equation} 

Next, we consider  $\delta_2$. Let $\bz_1,\bz_2$ be two i.i.d. copies of $\bx$, and $\bbeta_1,\bbeta_2$ be two  i.i.d. copies of $\bbeta$. Let $\bu_i:=\bK^{-1}_\lambda \tau(\bbeta_i^\top\bX)^\top$ and $g_i(\bx):=\tau(\bbeta^\top_i\bx)$ for $i=1,2$. Notice that $\E_{\bw_i}[\sigma(\bw_i^\top\bz_1)\sigma(\bX^\top\bw_i)]=\bK(\bX,\bz_1)$. Then, taking expectation with respect to $\bW$, we can obtain
\begin{align}
    &\E[\delta^2_2]=  \E_{\bbeta_1,\bbeta_2}\left[\bu_1^\top \E_{\bW,\bz_1,\bz_2}[\left(\bK_N(\bX,\bz_1)-\bK(\bX,\bz_1)\right)g_1(\bz_1)g_2(\bz_2)\left(\bK_N(\bz_2,\bX)-\bK(\bz_2,\bX)\right)]\bu_2\right]\\
    =~ & \frac{1}{N^2}\sum_{i,j=1}^N \E\left[\bu_1^\top \left(\sigma(\bw_i^\top\bz_1)\sigma(\bX^\top\bw_i)-\bK(\bX,\bz_1)\right)g_1(\bz_1)g_2(\bz_2)\left(\sigma(\bw_j^\top\bz_2)\sigma(\bw_j^\top\bX)-\bK(\bz_2,\bX)\right)\bu_2\right]\\
    = ~& \frac{1}{N^2}\sum_{i=1}^N \E \left[g_1(\bz_1)g_2(\bz_2)\sigma(\bw_i^\top\bz_1)\sigma(\bw_i^\top\bz_2)\bu_1^\top\left(\sigma(\bX^\top\bw_i)\sigma(\bw_i^\top\bX)-\bK(\bX,\bz_1)\bK(\bz_2,\bX)\right)\bu_2\right]\\
    \le ~ & \frac{1}{N}\E_{\bbeta_1,\bbeta_2,\bw,\bz_1,\bz_2}\left[g_1(\bz_1)g_2(\bz_2)\sigma(\bw^\top\bz_1)\sigma(\bw^\top\bz_2)\cdot\bu_1^\top\sigma(\bX^\top\bw)\sigma(\bw^\top\bX)\bu_2\right],
\end{align}
where in the last line, we apply the following bound:
\begin{align}
     &\E_{\bbeta_1,\bbeta_2,\bz_1,\bz_2}\left[ g_1(\bz_1)g_2(\bz_2)\sigma(\bw_i^\top\bz_1)\sigma(\bw_i^\top\bz_2)\bu_1^\top\bK(\bX,\bz_1)\bK(\bz_2,\bX)\bu_2\right]\\
     =&\left(\E_{\bbeta,\bx}\left[\tau(\bbeta^\top\bx)\sigma(\bw_i^\top\bx)\tau(\bbeta^\top\bX)\bK_\lambda^{-1}\bK(\bX,\bx)\right]\right)^2\ge 0.
\end{align}
Let $\bv_i:=\bK_\lambda^{-1/2}\E_{\bbeta_i}[\tau(\bbeta_i^\top\bz_i)\tau(\bbeta_i^\top\bX)]^\top$ for $i=1,2$. Then, Lemma~\ref{lemm:KPsiK_vector} shows that $\|\bv_i\|\le C(1+\lambda)$ for some universal constant $C$. Thus, similarly with the derivation of \eqref{eq:delta1_second}, we can deduce that
\begin{align}
&\E_{\bbeta_1,\bbeta_2,\bw,\bz_1,\bz_2}\left[g_1(\bz_1)g_2(\bz_2)\sigma(\bw^\top\bz_1)\sigma(\bw^\top\bz_2)\cdot\bu_1^\top\sigma(\bX^\top\bw)\sigma(\bw^\top\bX)\bu_2\right]\\
  = ~& \E_{\bw,\bz_1,\bz_2}\left[\sigma(\bw^\top\bz_1)\sigma(\bw^\top\bz_2)\cdot \bv_1\bK_\lambda^{-1/2}\sigma(\bX^\top\bw)\sigma(\bw^\top\bX)\bK_\lambda^{-1/2}\bv_2\right]\\
  \le~ & \E_{\bw,\bz_1,\bz_2}\left[\sigma(\bw^\top\bz_1)^2\sigma(\bw^\top\bz_2)^2\right]^{\frac{1}{2}}\E_{\bw,\bz_1,\bz_2}\left[\norm{\bv_1}^2\norm{\bv_2}^2\norm{\bK_\lambda^{-1/2}\sigma(\bX^\top\bw)\sigma(\bw^\top\bX)\bK_\lambda^{-1/2}}^2\right]^{\frac{1}{2}}\\
  \le ~& C^2(1+\lambda)^2 \norm{\sigma}_4^2 \E_\bw\left[\left( \sigma(\bw^\top\bX)\bK_\lambda^{-1}\sigma(\bX^\top\bw)\right)^2\right]^{\frac{1}{2}}\\
  \le ~&\frac{C^2(1+\lambda)^2 \norm{\sigma}_4^2}{\lambda_0} \E_\bw\left[\left(\sum_{i=1}^n\sigma(\bw^\top\bx_i)^2\right)^2\right]^{\frac{1}{2}}\\
  \le ~&\frac{C^2(1+\lambda)^2 \norm{\sigma}_4^4n}{\lambda_0},
\end{align}where the last line is analogous to \eqref{eq:delta1_second}. Therefore, $\E[\delta_{2}^2]\le C_2(1+\lambda)^2\frac{n}{N}$. This indicates, for any $t>0$,
\begin{equation}
    \Parg{|\delta_2|>2t(1+\lambda)}\le \frac{C_2n}{Nt^2}.
\end{equation}Hence, by taking $t=4\log^{C_\sigma}(N)\sqrt{C_2n/N}$, we can conclude the bound of $\delta_2$ in \eqref{eq:bound_delta2} with probability at least $1-\frac{1}{8}\log^{-2{C_\sigma}}(N)$. 

The analysis of $\delta_{1,2}$ is similar to the analysis for $\delta_2$.  By definition, we have
\begin{align}
    \delta_{1,2}=~&2\Tr \left(\E_\bx\left[(\KmN-\Km)\Km^\top\right]\bK_{\lambda}^{-1}(\bPsi+\sigma^2_\eps\Id)\Kl^{-1}\right)\\
    =~&2\E_\bx\left[\Km^\top\bK_{\lambda}^{-1}(\bPsi+\sigma^2_\eps\Id)\Kl^{-1}(\KmN-\Km)\right].
\end{align}
Then, consider $\bz_1,\bz_2$ as i.i.d. copies of $\bx$. Let $\bA:=\bK_{\lambda}^{-1}(\bPsi+\sigma^2_\eps\Id)\Kl^{-1}$ and $$\bK_{m,i}:=\E_\bw[\sigma(\bw^\top\bz_i)\sigma(\bw^\top\bX)]^\top\in\R^n,$$
for $i=1,2$. Then, we have
\begin{align}
    &\E[\delta_{1,2}^2]=\frac{4}{N^2}\sum_{i,j=1}^N\E\left[\bK_{m,1}^\top\bA\left(\sigma(\bw_i^\top\bz_1)\sigma(\bX^\top\bw_i)-\bK_{m,1}\right)\left(\sigma(\bw_j^\top\bz_2)\sigma(\bw_j^\top\bX)-\bK_{m,2}^\top\right)\bA\bK_{m,2} \right]
    \\
   =~& \frac{4}{N^2}\sum_{i=1}^N\E\left[\bK_{m,1}^\top\bA\left(\sigma(\bw_i^\top\bz_1)\sigma(\bX^\top\bw_i)-\bK_{m,1}\right)\left(\sigma(\bw_i^\top\bz_2)\sigma(\bw_i^\top\bX)-\bK_{m,2}^\top\right)\bA\bK_{m,2} \right]
    \\
    =~& \frac{4}{N^2}\sum_{i=1}^N\E\left[\bK_{m,1}^\top\bA\left(\sigma(\bw_i^\top\bz_1)\sigma(\bw_i^\top\bz_2)\sigma(\bX^\top\bw_i)\sigma(\bw_i^\top\bX)-\bK_{m,1}\bK_{m,2}^\top\right)\bA\bK_{m,2} \right]
    \\
    \overset{(i)}{\le}~&  \frac{4}{N}\E_{\bw,\bz_1,\bz_2}\left[\sigma(\bw^\top\bz_1)\sigma(\bw^\top\bz_2)\bK_{m,1}^\top\bA\sigma(\bX^\top\bw)\sigma(\bw^\top\bX)\bA\bK_{m,2} \right]\\
    \le ~& \frac{4}{N}\E\left[\sigma(\bw^\top\bz_1)^2\sigma(\bw^\top\bz_2)^2\right]^{\frac{1}{2}}\E\left[\norm{\bK_{m,1}^\top\bK_\lambda^{-\frac{1}{2}}}^2\norm{\bK_{m,2}^\top\bK_\lambda^{-\frac{1}{2}}}^2\norm{\bK_\lambda^{\frac{1}{2}}\bA\sigma(\bX^\top\bw)\sigma(\bw^\top\bX)\bA\bK_\lambda^{\frac{1}{2}}}^2 \right]\\
    \overset{(ii)}{\le} ~& \frac{4C^2(1+\lambda)^2 \norm{\sigma}_4^2}{\lambda_0N}\E\left[\left(\sum_{i=1}^n\sigma(\bw^\top\bx_i)^2\right)^2\right]^{1/2}\le C_{1,2}(1+\lambda)^2\frac{n}{N},
\end{align}
for some constant $C_{1,2}>0$, where $(i)$ is because of positiveness of $\bA$ and $(ii)$ is due to Lemmas~\ref{lemm:KPsiK} and \ref{lemm:KPsiK_vector}. Thus, Markov inequality allows us to obtain for a constant $C>0$,
\begin{equation} 
\Parg{|\delta_{1,2}|>C(1+\lambda)\log^{{C_\sigma}}(N)\sqrt{\frac{n}{N}}}\le \frac{1}{8\log^{2{C_\sigma}} (N)},
\end{equation}
Together with \eqref{eq:delta_11_prob}, we can conclude the bound for $\delta_1$ in \eqref{eq:bound_delta1}.
\end{proof}

Based on the above lemmas, we are now ready to prove Theorem~\ref{thm:test_diff} for the concentrations of the generalization errors between RFRR and KRR.
\begin{proof}[Proof of Theorem~\ref{thm:test_diff}]
Recall $\bK=\bK(\bX,\bX)$ and $\bK_N=\bK_N(\bX,\bX)$. Hence, we can further decompose the test errors \eqref{eq:def_testerror} for both RFRR and KRR in the following way:
\begin{align}
    \mathcal L(\hat f^{(\KK)}_\lambda)=~& \E [|f^*(\bx)|^2]+\Tr\left[(\bK+\lambda\Id)^{-1}\E[\by\by^\top] (\bK+\lambda\Id)^{-1}\E[\bK(\bX,\bx) \bK(\bx,\bX)]\right]\nonumber\\
    &-2\Tr\left[(\bK+\lambda\Id)^{-1}\E[\by f^*(\bx)\bK(\bx,\bX)]\right],\label{eq:test_K}\\
    \mathcal L(\hat f^{(\RF)}_\lambda)=~& \E [|f^*(\bx)|^2]+\Tr\left[(\bK_N+\lambda\Id)^{-1}\E[\by\by^\top] (\bK_N+\lambda\Id)^{-1}\E[\bK_N(\bX,\bx) \bK_N(\bx,\bX)]\right]\nonumber\\
    &-2\Tr\left[(\bK_N+\lambda\Id)^{-1}\E[\by f^*(\bx)\bK_N(\bx,\bX)]\right],\label{eq:test_RF}
\end{align}
where we are taking expectations with respect to $\bx,\bbeta$, and $\beps$. Let us denote
\begin{align*}
        E_1:=~&\Tr\left[(\bK_N+\lambda\Id)^{-1}\E[\by\by^\top] (\bK_N+\lambda\Id)^{-1}\E[\bK_N(\bX,\bx) \bK_N(\bx,\bX)]\right],\\
        \bar E_1:=~&\Tr\left[(\bK+\lambda\Id)^{-1}\E[\by\by^\top] (\bK+\lambda\Id)^{-1}\E[\bK(\bX,\bx) \bK(\bx,\bX)]\right],\\
       E_2:=~&\Tr\left[(\bK_N+\lambda\Id)^{-1}\E[\by f^*(\bx)\bK_N(\bx,\bX)]\right],\\
        \bar E_2:=~&\Tr\left[(\bK+\lambda\Id)^{-1}\E[\by f^*(\bx)\bK(\bx,\bX)]\right].
\end{align*}   
Therefore, by taking the expectation with respect to $\bbeta$ and $\varepsilon$, we have
\begin{align*}
     E_1=~& \E_\bx\left[\bK_N(\bx,\bX)(\bK_N+\lambda\Id)^{-1}\left(\bPsi+\sigma^2_\eps\Id\right)(\bK_N+\lambda\Id)^{-1}\bK_N(\bX,\bx)  \right],\\
     \bar E_1=~&\E_\bx\left[\bK(\bx,\bX)(\bK+\lambda\Id)^{-1}\left(\bPsi+\sigma^2_\eps\Id\right)(\bK+\lambda\Id)^{-1}\bK(\bX,\bx)  \right],\\
E_2=~&\Tr\left[(\bK_N+\lambda\Id)^{-1}\E[\bu\bK_N(\bx,\bX)]\right],\\
    \bar E_2=~&\Tr\left[(\bK+\lambda\Id)^{-1}\E[\bu\bK(\bx,\bX)]\right],
\end{align*}
where $\bPsi=\E_\bbeta[\tau(\bX^\top\bbeta)\tau(\bbeta^{\top}\bX)]$ and  $\bu=\E_\bbeta[\tau(\bX^\top\bbeta) f^*(\bx)]\in\R^n$.  We can further get the decomposition: $E_1-\bar E_1=J_{1,1}+J_{1,2}+J_{1,3}$ and $E_2-\bar E_2= J_{2,1}+J_{2,2}$, where
 \begin{align*}
    J_{1,1}:=~& \E_\bx\left[\KmN^\top\left(\KNl^{-1}-\Kl^{-1}\right)\left(\bPsi+\sigma^2_\eps\Id\right)\KNl^{-1}\KmN\right],\\
     J_{1,2}:=~& \E_\bx\left[\KmN^\top\Kl^{-1}\left(\bPsi+\sigma^2_\eps\Id\right)\left(\KNl^{-1}-\Kl^{-1}\right)\KmN\right],\\
   J_{1,3}:=~& \E_{\bbeta,\beps}\left[ \by^\top\Kl^{-1} \left(\KxxN-\Kxx\right)\Kl^{-1}\by\right],\\
    J_{2,1}:=~&  \E_\bx\left[\bK_N(\bx,\bX)\left(\KNl^{-1}-\Kl^{-1}\right)\bu\right],\\
    J_{2,2}:= ~&  \E_\bx\left[\left(\bK_N(\bx,\bX)-\bK(\bx,\bX)\right)\Kl^{-1}\bu\right].
\end{align*}
Recall that $\bPsi=\E_\bbeta[\tau(\bX^\top\bbeta)\tau(\bbeta^{\top}\bX)]$, $\bu=\E_\bbeta[\tau(\bX^\top\bbeta) f^*(\bx)]\in\R^n$ and  $\bPsi_{\lambda} =\bPsi+\lambda\Id$. Notice that 
\begin{align}
    \KNl^{-1}-\Kl^{-1}=~&\KNl^{-1}\left(\bK-\bK_N\right)\Kl^{-1}\\
    =~& \KNl^{-\frac{1}{2}} \KNl^{-\frac{1}{2}}\Kl^{\frac{1}{2}}\Kl^{-\frac{1}{2}}\left(\bK-\bK_N\right)\Kl^{-\frac{1}{2}}\Kl^{-\frac{1}{2}}.
\end{align}
Hence, we can apply Proposition~\ref{prop:kernel_con}, Corollary~\ref{coro_con}, Lemmas~\ref{lemm:Kmix_bound}, \ref{lemm:KPsiK} and \ref{lemm:KPsiK_vector} to conclude that 
\begin{align*}
    |J_{1,1}|\le ~& \E_\bx\left[\norm{\KmN^\top\KNl^{-1/2}}^2\right]\cdot\norm{\KNl^{-1/2}\Kl^{1/2}}^2\norm{\Kl^{-1/2}\left(\bK-\bK_N\right)\Kl^{-1/2}}\cdot\norm{\Kl^{-1/2}\left(\bPsi+\sigma^2_\eps\Id\right)\Kl^{-1/2}}\\
    \le ~& C(1+\lambda)\log^{C_{\sigma}}(N)\sqrt{\frac{n}{N}},\\
    |J_{1,2}|\le~ & \E_\bx\left[\norm{\KmN^\top\KNl^{-1/2}}^2\right]\cdot\norm{\KNl^{-1/2}\Kl^{1/2}}\norm{\KNl^{1/2}\Kl^{-1/2}}\\
     &\cdot\norm{\Kl^{-1/2}\left(\bK-\bK_N\right)\Kl^{-1/2}}\norm{\Kl^{-1/2}\left(\bPsi+\sigma^2_\eps\Id\right)\Kl^{-1/2}}\\
    \le ~& C(1+\lambda)\log^{C_{\sigma}}(N)\sqrt{\frac{n}{N}},\\
    |J_{2,1}|\le ~& \E_\bx\left[\left|\KmN^\top\KNl^{-1/2}\KNl^{-1/2}\Kl^{1/2}\Kl^{-1/2}(\bK-\bK_N)\Kl^{-1/2} \Kl^{-1/2}\bu\right|\right]\\
    \le~ & \E_\bx\left[\left\|\KmN^\top\KNl^{-1/2}\right\|\cdot\norm{\KNl^{-1/2}\Kl^{1/2}}\cdot\norm{\Kl^{-1/2}(\bK-\bK_N)\Kl^{-1/2}}\norm{ \Kl^{-1/2}\bu}\right]\\
    \le ~& C(1+\lambda)\log^{C_{\sigma}}(N)\sqrt{\frac{n}{N}},
\end{align*}for some constant $C>0$ depending on the norms of $\tau$ and $\sigma$, $\lambda_0$ and $\sigma_{\eps}$
 with probability at least $1-N^{-1}$. 

 Meanwhile, based on Lemma~\ref{lemm:deltai}, $|J_{1,3}| $ and $|J_{2,2}| $ are both less than $C\log^{C_\sigma}(N)\sqrt{\frac{n}{N}}$ with probability at least $1-\frac{1}{2}\log^{-C_\sigma}(N)$, because $\delta_1=J_{1,3}$ and $\delta_2=J_{2,2}$. Hence, combining the controls of $J_{1,1}, J_{1,2}, J_{1,3}$ and $J_{2,1}, J_{2,2}$, we complete the proof of Theorem~\ref{thm:test_diff}.
\end{proof}

\subsection{Proof of Theorem~\ref{thm:pkrr}}
We first show \eqref{eq:Kl_train}. In the proof of Proposition~\ref{prop:pkrr}, we know $\lambda_{\min}(\bK_\ell)\ge 2\lambda_0$ and $\lambda_{\min}(\bK)\ge \lambda_0$. Similar to the proof of \eqref{eq:last_thm37}, using the closed form formula of the training error from \eqref{eq:Etrain_Kn}
 and Proposition \ref{prop:pkrr}, we have
\begin{align}
     \left| E_{\train}^{(\ell,\lambda)}-E_{\train}^{(\KK,\lambda)}\right| 
     =& \frac{\lambda^2}{n} \left|\v y^\top\left[(\bK_{\ell}+\lambda\Id)^{-2}-(\bK+\lambda\Id)^{-2}\right] \v y\right| \notag \\
     \leq & \frac{\lambda^2}{n} \|(\bK_{\ell}+\lambda\Id)^{-2}-(\bK+\lambda\Id)^{-2}\|\cdot  \|\v y\|^2 \notag\\
     \leq &\frac{3\lambda^2 \|\by\|^2}{2\lambda_0 n} \|(\bK_{\ell}+\lambda\Id)^{-1}-(\bK+\lambda\Id)^{-1}\| \notag \\
     \leq &\frac{3\lambda^2\|\by\|^2}{2 \lambda_0^3 n} \|\bK-\bK_{\ell}\|\leq  \frac{C\lambda^2\|\by\|^2\|\sigma\|_4^2}{\lambda_0^3 n}   \left\| \left(\bX^\top \bX\right)^{\odot \ell+1}-\Id\right\|_F
     \label{eq:719}
\end{align}
for an absolute constant $C>0$,
where in the third inequality, we use the estimate
\begin{align}\label{eq:Kl_lambda_approx}
 \|(\bK_{\ell}+\lambda\Id)^{-1}-(\bK+\lambda\Id)^{-1}\|=\norm{\bK_{\lambda}^{-1} (\bK-\bK_{\ell})\bK_{\ell,\lambda}^{-1}} \leq \frac{1}{(\lambda+\lambda_0)\lambda_0} \norm{\bK-\bK_{\ell}} . 
\end{align}

Next, we prove \eqref{eq:GCVn}. 
With the same proof in Lemma \ref{lemm:K_lambda_tr}, we also have
\begin{align}\label{eq:TRKL}
    \left(\lambda+\norm{\sigma}_2^{2}\right)^{-1}\le \tr\bK_{\ell,\lambda}^{-1}\le \lambda_0^{-1}.
\end{align}
From the definition of GCV in \eqref{eq:GCV}, we have
\begin{align}
     |\GCV_n^{(\KK,\lambda)}-\GCV_n^{(\ell,\lambda)}|\leq &\lambda^{-2}\left|\left(\left(\tr \bK_{\lambda}^{-1}\right)^{-2}-\left(\tr \bK_{\ell,\lambda}^{-1}\right)^{-2}\right) E_{\train}^{(\KK,\lambda)}\right| \label{eq:GCVl_1}\\
      &+\lambda^{-2}\left|\left(\tr \bK_{\lambda}^{-1}\right)^{-2}\left( E_{\train}^{(\KK,\lambda)}-E_{\train}^{(\ell,\lambda)}\right)\right|.\label{eq:GCVl_2}
\end{align}
Equipped with   \eqref{eq:TRKL} and Lemma \ref{lemm:K_lambda_tr}, following every step in the proof of \eqref{eq:GCV_K} in Section~\ref{sec:LOOCV}, we can obtain 
 a similar bound for \eqref{eq:GCVl_1} as follows:
\begin{align}
  &\lambda^{-2}\left|\left(\left(\tr \bK_{\lambda}^{-1}\right)^{-2}-\left(\tr \bK_{\ell,\lambda}^{-1}\right)^{-2}\right) E_{\train}^{(\KK,\lambda)}\right|\\
  \leq & \left(\tr\bK_{\lambda}^{-1}\right)^{-2}\left(\tr\bK_{\ell,\lambda}^{-1}\right)^{-2} \left|\tr (\bK_{\lambda}^{-1}-\bK_{\ell,\lambda}^{-1})\right|\tr \left(\bK_{\lambda}^{-1}+\bK_{\ell,\lambda}^{-1}\right)\frac{1}{n}\|\bK_{\lambda}^{-2}\| \norm{\by}^2  \\
  \leq & \frac{8(\lambda+\norm{\sigma}_2^2)^4}{\lambda_0^5 n}\|\bK-\bK_{\ell}\|\leq \frac{8\sqrt{2}(\lambda+\norm{\sigma}_2^2)^4\|\sigma\|_4^2}{\lambda_0^5 n} \left\| \left(\bX^\top \bX\right)^{\odot \ell+1}-\Id\right\|_F.
\end{align}
Similarly, for the second term \eqref{eq:GCV_2}, we have from \eqref{eq:719} and Lemma \ref{lemm:K_lambda_tr},
\begin{align*}
    \lambda^{-2}\left|\left(\tr \bK_{\lambda}^{-1}\right)^{-2}\left( E_{\train}^{(\KK,\lambda)}-E_{\train}^{(\ell,\lambda)}\right)\right| &\leq \frac{C(\lambda+\norm{\sigma}_2^2)^{2} \|\sigma\|_4^2}{\lambda_0^3 n }\left\| \left(\bX^\top \bX\right)^{\odot \ell+1}-\Id\right\|_F, 
\end{align*}
which implies \eqref{eq:GCVn}. Next, we verify \eqref{eq:CVn}. Recall \eqref{eq:shortcut} and \eqref{eq:shortcut_N}. Analogously, we have 
\begin{align}
    \CV_n^{(\ell,\lambda)}= & \frac{1}{n}\by^\top \bK_{\ell,\lambda}^{-1}\bD_\ell^{-2} \bK_{\ell,\lambda}^{-1}\by,\label{eq:shortcut_ell}
\end{align}
where $\bD_\ell$ is a diagonal matrix with diagonals $[\bD_\ell]_{ii}=[\bK_{\ell,\lambda}^{-1}]_{ii}$ for $i\in[n]$. Notice that $\norm{\bD_\ell-\bD}$ has the same upper bound as \eqref{eq:Kl_lambda_approx}, and any $[\bD_\ell]_{ii}$ has the same lower and upper bounds as \eqref{eq:TRKL} for $i\in[n]$. Hence, repeatedly applying Proposition \ref{prop:pkrr} and following \eqref{eq:CV_derive}, we can obtain
\begin{align*}
    \left|\CV_n^{(\ell,\lambda)}-\CV_n^{(\KK,\lambda)}\right|\le C_2(1+\lambda^4)\frac{\|\by\|^2}{n}\left\| \left(\bX^\top \bX\right)^{\odot \ell+1}-\Id\right\|_F,
\end{align*}for some constant $C_2>0$ which only relies on $\norm{\sigma}_2, \norm{\sigma}_4$, and $\lambda_0$. This  concludes the bound in \eqref{eq:CVn}.

Finally, we can repeat the analysis in the proof of Theorem~\ref{thm:test_diff} and apply \eqref{eq:Kl_lambda_approx} to obtain \eqref{eq:Kl_test}.
By taking expectation with respect to $\bbeta$ and $\varepsilon$, we have $\left| \mathcal L(\hat{f}_{\lambda}^{(\ell)}(\x))-\mathcal L (\hat{f}_{\lambda}^{(\KK)}(\x))\right| \le |E_1'-\bar E_1|+ |E_2'-\bar E_2|$, where
\begin{align*}
     E_1':=& \E_\bx\left[\bK_\ell(\bx,\bX)\bK_{\ell,\lambda}^{-1}\left(\bPsi+\sigma^2_\eps\Id\right)\bK_{\ell,\lambda}^{-1}\bK_\ell(\bX,\bx)  \right],\\
     \bar E_1=&\E_\bx\left[\bK(\bx,\bX)\Kl^{-1}\left(\bPsi+\sigma^2_\eps\Id\right)\Kl^{-1}\bK(\bX,\bx)  \right],\\
     E_2':=&\Tr\left[\bK_{\ell,\lambda}^{-1}\E[\bu\bK_\ell(\bx,\bX)]\right],\\
    \bar E_2=&\Tr\left[\bK_{\lambda}^{-1}\E[\bu\bK(\bx,\bX)]\right].
\end{align*} Denote 
$\bK_{m,\ell}=\bK_\ell(\bX,\bx)$ and $\bK_{\ell,\lambda}=\lambda\Id+\bK_\ell(\bX,\bX).$
Recall that $\bPsi=\E_\bbeta[\tau(\bX^\top\bbeta)\tau(\bbeta^{\top}\bX)]$ and  $\bu=\E_\bbeta[\tau(\bX^\top\bbeta) f^*(\bx)]\in\R^n$. Because of the Assumption \ref{assump:testdata}, similar to the proof of Proposition \ref{prop:pkrr}, we obtain 
\begin{equation}\label{eq:E_1_1}
    \norm{\bK_{m,\ell}-\bK_{m}}\le  \norm{\bK_{m,\ell}-\bK_{m}}_2\leq \sqrt{2} \|\sigma\|_4^2 \left\| (\bX^\top \bx)^{\odot (\ell+1)}\right\|_2\leq \frac{1}{\sqrt 2} \lambda_0.
\end{equation}
Moreover, analogously to Lemma~\ref{lemm:Kmix_bound}, we have
\begin{equation}\label{eq:E_1_2}
    \norm{\bK_{m,\ell}^\top\bK_{\ell,\lambda}^{-1/2}}\le \norm{\sigma}_2^2+\lambda.
\end{equation}
Also, following the proofs of Lemma~\ref{lemm:KPsiK} and Lemma~\ref{lemm:KPsiK_vector}, we can check that 
\begin{equation}\label{eq:E_1_3}
    \norm{\bK_{\ell,\lambda}^{-1/2}\bPsi\bK_{\ell,\lambda}^{-1/2}},~\norm{\bK_{\ell,\lambda}^{-1/2}\bu}\le C,
\end{equation}for some  constant $C>0$ depending only on $\sigma,\tau$. Therefore, because of Proposition~\ref{prop:pkrr}, Lemmas~\ref{lemm:Kmix_bound} and \ref{lemm:KPsiK}, and \eqref{eq:E_1_1}, \eqref{eq:E_1_2} and \eqref{eq:E_1_3},  we can deduce that  
\begin{align}
    |E_1'-\bar E_1|\le~ & \left| (\bK_{m,\ell}-\Km)^\top \bK_{\ell,\lambda}^{-1/2}\bK_{\ell,\lambda}^{-1/2}\left(\bPsi+\sigma^2_\eps\Id\right)\bK_{\ell,\lambda}^{-1/2}\bK_{\ell,\lambda}^{-1/2}\bK_{m,\ell}\right|\\
    &+\left| \Km^\top \bK_{\lambda}^{-1/2}\bK_{\lambda}^{-1/2}\left(\bK-\bK_\ell\right)\bK_{\ell,\lambda}^{-1/2}\bK_{\ell,\lambda}^{-1/2}\left(\bPsi+\sigma^2_\eps\Id\right)\bK_{\ell,\lambda}^{-1/2}\bK_{\ell,\lambda}^{-1/2}\bK_{m,\ell}\right|\\
    &+\left| \Km^\top \bK_{\lambda}^{-1/2}\bK_{\lambda}^{-1/2}\left(\bPsi+\sigma^2_\eps\Id\right)\bK_{\lambda}^{-1/2}\bK_{\lambda}^{-1/2}\left(\bK-\bK_\ell\right)\bK_{\ell,\lambda}^{-1/2}\bK_{\ell,\lambda}^{-1/2}\bK_{m,\ell}\right|\\
    &+\left| \Km^\top \bK_{\lambda}^{-1/2}\bK_{\lambda}^{-1/2}\left(\bPsi+\sigma^2_\eps\Id\right)\bK_{\lambda}^{-1/2}\bK_{\lambda}^{-1/2}\left(\bK_{m,\ell}-\Km\right)\right|\\
    \le ~& C'_1(1+\lambda)\left\| \left(\tilde \bX^\top \tilde \bX\right)^{\odot \ell+1}-\Id\right\|_F,
\end{align}for some constant $C_1'>0$. Similarly, due to Lemma~\ref{lemm:KPsiK_vector}, \eqref{eq:E_1_1}, \eqref{eq:E_1_2} and \eqref{eq:E_1_3}, we can obtain
\begin{align}
     |E_2'-\bar E_2|\le ~& \E\left| (\bK_{m,\ell}-\Km)^\top \bK_{\ell,\lambda}^{-1/2}\bK_{\ell,\lambda}^{-1/2}\bu\right|\\
     &+ \E\left|  \Km^\top \bK_{\ell,\lambda}^{-1/2}\bK_{\ell,\lambda}^{-1/2}\left(\bK-\bK_\ell\right)\bK_{\lambda}^{-1/2}\bK_{\lambda}^{-1/2}\bu\right|\\
     \le~ &C'_2(1+\lambda)\left\| \left(\tilde \bX^\top \tilde \bX\right)^{\odot \ell+1}-\Id\right\|_F,
\end{align}for some constant $C_2'>0$. This completes the proof of \eqref{eq:Kl_test}.

\subsection{Proof of Theorem~\ref{thm:lower_bound}}
First, we state a more generic statement of the lower bound of the generalization error for RFRR. Instead of proving Theorem~\ref{thm:lower_bound}, we prove the following theorem in this section.
\begin{theorem} 
Under the assumptions of Theorem~\ref{thm:test_diff}, when $N/\log^{2C_{\sigma}}(N)\geq C_1(1+\lambda^2) n$  and $n\geq \max\{n_0,n_1\}$,  with  probability at least $1-\log^{-C_\sigma}(N)$, 
\begin{align}
  \mathcal L(\hat{f}_{\lambda}^{(\RF)})
  \ge ~&\|P_{> \ell}f^*\|^2_{2}-C_2(1+\lambda)\log^{C_{\sigma}}(N)\sqrt{\frac{n}{N}}-C_2\sqrt{n}\left\| (\bX^\top \bx)^{\odot (\ell+1)}\right\|_2,\label{eq:lower}
\end{align} 
and 
\begin{align}
 \mathcal L(\hat{f}_{\lambda}^{(\RF)})
  \ge ~ &\|P_{> \ell}f^*\|^2_{2}+\sigma_{\beps}^2\E_\bx\left[\bK_{m,\ell}^\top\bK_{\lambda,\ell}^{-2}\bK_{m,\ell}\right]-C_2\sqrt{n}\left\| (\bX^\top \bx)^{\odot (\ell+1)}\right\|_2 \\
  &-C_2(1+\lambda)\left(\left\| \left(\tilde \bX^\top \tilde \bX\right)^{\odot \ell+1}-\Id\right\|_F+\log^{C_\sigma}(N)\sqrt{\frac{n}{N}}\right), \label{eq:lower1}
\end{align}
where  $C_1$ depends only on $\sigma$, and  $C_2>0$ depends only on $\sigma,\tau$ and $\sigma_{\eps}$.
In particular, when $N/\log^{2C_{\sigma}} N\gg n,$ with high probability, 
\begin{align}
 \mathcal L(\hat{f}_{\lambda}^{(\RF)})&\ge \|P_{> \ell}f^*\|^2_{2}+\sigma_{\beps}^2\E_\bx\left[\bK_{m,\ell}^\top\bK_{\lambda,\ell}^{-2}\bK_{m,\ell}\right]-o_n(1)\\
 &\geq \|P_{> \ell}f^*\|^2_{2}-o_n(1).
\end{align}
\end{theorem}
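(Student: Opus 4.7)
The strategy is to chain two approximations to reduce to the polynomial-kernel predictor $\hat{f}_{\lambda}^{(\ell)}$, which admits a direct bias-variance analysis via Hermite orthogonality. First, Theorem~\ref{thm:test_diff} gives the random-feature-to-kernel step, $\mathcal L(\hat{f}_{\lambda}^{(\RF)})\ge \mathcal L(\hat{f}_{\lambda}^{(\KK)})-C(1+\lambda)\log^{C_\sigma}(N)\sqrt{n/N}$ on an event of probability at least $1-\log^{-C_\sigma}(N)$; then the generalization bound \eqref{eq:Kl_test} of Theorem~\ref{thm:pkrr} gives the deterministic kernel-to-polynomial step, $\mathcal L(\hat{f}_{\lambda}^{(\KK)})\ge \mathcal L(\hat{f}_{\lambda}^{(\ell)})-C'(1+\lambda)\tilde\Delta_\ell$. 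It therefore suffices to lower bound $\mathcal L(\hat{f}_{\lambda}^{(\ell)})$ directly.

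Writing $\ba(\bx):=\bK_{\ell,\lambda}^{-1}\bK_{m,\ell}$, the predictor decomposes as $\hat{f}_{\lambda}^{(\ell)}(\bx)=\ba(\bx)^\top\tau(\bX^\top\bbeta)+\ba(\bx)^\top\beps$. Since $\beps\perp(\bx,\bbeta)$ with covariance $\sigma_\beps^2\Id$, the generalization error splits exactly as
\begin{align}
\mathcal L(\hat{f}_{\lambda}^{(\ell)})=\E_{\bx,\bbeta}\bigl[(\ba(\bx)^\top\tau(\bX^\top\bbeta)-\tau(\bbeta^\top\bx))^2\bigr]+\sigma_\beps^2\E_\bx\bigl[\bK_{m,\ell}^\top\bK_{\ell,\lambda}^{-2}\bK_{m,\ell}\bigr],
\end{align}
the second summand being precisely the variance term appearing in \eqref{eq:lower1}. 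For the bias I would Hermite-expand $\tau=\sum_k\zeta_k(\tau)h_k$ and invoke the orthogonality identity \eqref{eq:orthogonal_relation}: all cross-degree contributions vanish and the bias reduces to $\sum_{k\ge 0}\zeta_k^2(\tau)\,\E_\bx T_k(\bx)$ with
\begin{align}
T_k(\bx):=\ba(\bx)^\top(\bX^\top\bX)^{\odot k}\ba(\bx)-2\ba(\bx)^\top(\bX^\top\bx)^{\odot k}+1=\|\bZ_k\ba(\bx)-\bx^{\otimes k}\|^2\ge 0,
\end{align}
where $\bZ_k$ has columns $\bx_i^{\otimes k}$. Non-negativity immediately means the $k\le\ell$ contributions can be discarded without loss when lower bounding.

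For each $k>\ell$, since $|\langle\bx_i,\bx_j\rangle|,|\langle\bx_i,\bx\rangle|\le 1/\sqrt 2$ under Assumptions~\ref{assump:asymptotics} and \ref{assump:testdata}, every off-diagonal entry of $(\bX^\top\bX)^{\odot k}-\Id$ and every entry of $(\bX^\top\bx)^{\odot k}$ is dominated in absolute value by the corresponding $(\ell+1)$-st-power entry. Rewriting $T_k=1+\|\ba\|^2+\ba^\top((\bX^\top\bX)^{\odot k}-\Id)\ba-2\ba^\top(\bX^\top\bx)^{\odot k}$, applying Cauchy-Schwarz to the cross terms, and using the uniform deterministic bound $\|\ba(\bx)\|^2\le(\|\sigma\|_2^2+\lambda)/\lambda_0$ obtained by a Schur-complement argument on the extended matrix $\tilde\bK_\ell$ parallel to the second part of Lemma~\ref{lemm:Kmix_bound}, summing the tail with $\sum_{k>\ell}\zeta_k^2(\tau)=\|P_{>\ell}f^*\|_2^2$ yields
\begin{align}
\sum_{k>\ell}\zeta_k^2(\tau)\,\E_\bx T_k(\bx)\ge \|P_{>\ell}f^*\|_2^2-C''(1+\lambda)\bigl(\Delta_\ell+\sqrt n\,\|(\bX^\top\bx)^{\odot(\ell+1)}\|_2\bigr),
\end{align}
where the $\sqrt n$ factor enters through the Cauchy-Schwarz bound $\sum_i|\langle\bx_i,\bx\rangle|^{\ell+1}\le\sqrt n\,\|(\bX^\top\bx)^{\odot(\ell+1)}\|_2$ on the $\ell^1$-type remainder.

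The main obstacle I anticipate is the book-keeping required to ensure uniformity of these estimates across the Hermite index $k$ and the test point $\bx$, so that the tail sum indeed converges to the full energy $\|P_{>\ell}f^*\|_2^2$ without a degree-dependent loss. Once this is controlled, combining the three inequalities above yields \eqref{eq:lower} (using the elementary estimate $\tilde\Delta_\ell\le\Delta_\ell+\sqrt 2\,\|(\bX^\top\bx)^{\odot(\ell+1)}\|_2$) and the sharper form \eqref{eq:lower1} that retains the noise contribution; the simplified asymptotic $\mathcal L(\hat{f}_{\lambda}^{(\RF)})\ge\|P_{>\ell}f^*\|_2^2-o_n(1)$ then follows by dropping the non-negative variance contribution and noting that every error quantity vanishes as $n\to\infty$ with $N/\log^{2C_\sigma}N\gg n$ under Assumptions~\ref{assump:asymptotics} and \ref{assump:testdata}.
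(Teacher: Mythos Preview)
Your approach is valid but genuinely different from the paper's. The paper does \emph{not} first reduce to the polynomial predictor $\hat f_\lambda^{(\ell)}$; instead it works directly with the KRR predictor $\hat f_\lambda^{(\KK)}$ and splits \emph{the label} rather than the degree of the bias. Concretely, it writes $\by=P_{\le\ell}f^*(\bX)+(P_{>\ell}f^*(\bX)+\beps)$, which induces a decomposition $\hat f_\lambda^{(\KK)}=P_{\le\ell}\hat f_\lambda^{(\KK)}+P_{>\ell}\hat f_\lambda^{(\KK)}$; Hermite orthogonality in $\bbeta$ then kills all cross terms and yields $\mathcal L(\hat f_\lambda^{(\KK)})\ge \E(P_{>\ell}f^*-P_{>\ell}\hat f_\lambda^{(\KK)})^2$. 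Expanding this square, the quadratic term $\E[P_{>\ell}\hat f_\lambda^{(\KK)}]^2$ already contains the variance $\sigma_\beps^2\E_\bx\|\bK_\lambda^{-1}\bK_m\|^2$, and the only cross term to bound is $\E[P_{>\ell}f^*\cdot P_{>\ell}\hat f_\lambda^{(\KK)}]$, which depends solely on the test-point quantity $\|(\bX^\top\bx)^{\odot(\ell+1)}\|_2$ (this is where $\sqrt n$ enters, via the crude bound $\|\bK(\bx,\bX)\|\le\sqrt{2n}\|\sigma\|_4^2$). Only \emph{after} this is Theorem~\ref{thm:test_diff} invoked to pass to RFRR, and Theorem~\ref{thm:pkrr}-type estimates are used merely to replace the variance term by its $\bK_\ell$ analogue for \eqref{eq:lower1}.

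The trade-off is this: your route is more modular---it black-boxes Theorems~\ref{thm:test_diff} and \ref{thm:pkrr} and reduces everything to a clean degree-by-degree bias analysis with the explicit nonnegative quantities $T_k$---but it inevitably imports the $\tilde\Delta_\ell$ error from \eqref{eq:Kl_test}, so you do not literally recover \eqref{eq:lower} (which has \emph{no} $\Delta_\ell$ term), only \eqref{eq:lower1} and the asymptotic statements. The paper's label-splitting avoids $\Delta_\ell$ entirely in the first bound because the cross term it controls involves only $(\bX^\top\bx)^{\odot k}$, never $(\bX^\top\bX)^{\odot k}$. Your ``main obstacle'' about uniformity in $k$ is not a real issue: since $|\langle\bx_i,\bx_j\rangle|\le 1/\sqrt 2$, the $k$-th power remainders are dominated by the $(\ell+1)$-st and the weights $\zeta_k^2(\tau)$ sum to $\|P_{>\ell}f^*\|_2^2<\infty$. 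Incidentally, with your Schur-complement bound $\|\ba(\bx)\|^2\le(\|\sigma\|_2^2+\lambda)/\lambda_0$ and Cauchy--Schwarz you would actually get the test-point correction \emph{without} the $\sqrt n$ prefactor, which is sharper than what either version of the argument states.
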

\begin{proof}

Since $\tau \in L^2(\mathbb R,\Gamma)$, we have the following Hermite expansion:
$\tau(x)=\sum_{k=0}^{\infty} \zeta_k(\tau) h_k(x). \notag
$
Then
\begin{align*}
f^*(\bx) &= \tau(\bbeta^\top \bx)=\sum_{k=0}^{\infty} \zeta_k(\tau) h_k(\bbeta^\top \bx),\\
\left( P_{\leq \ell}f^*\right)(\bx)&=\sum_{k<\ell} \zeta_k(\tau)h_k(\bbeta^\top\bx), \quad \left( P_{> \ell}f^*\right)(\bx)=\sum_{k\geq \ell+1} \zeta_k(\tau)h_k(\bbeta^\top\bx).
\end{align*}
Similarly, we define
\begin{align}
  f^*(\bX) &= \tau(\bbeta^\top \bX)=\sum_{k=0}^{\infty} \zeta_k(\tau) h_k(\bbeta^\top \bX) \in \mathbb R^n, \\
  \left( P_{\leq \ell}f^*\right)(\bX)&=\sum_{k\leq \ell} \zeta_k(\tau)h_k(\bbeta^\top\bX), \quad \left( P_{> \ell}f^*\right)(\bX)=\sum_{k\geq \ell+1} \zeta_k(\tau)h_k(\bbeta^\top\bX), \label{eq:Pl_decompose}
\end{align}
By the property of Hermite polynomials in \eqref{eq:orthogonal_relation}, we know
\begin{align}
\E_{\bbeta} [h_j(\bbeta^\top \bx)h_k(\bbeta^\top \bx_i)]=
\delta_{jk}\langle \bx,\bx_i\rangle ^k. \notag
\end{align}
This implies
\begin{align}
    \|f^*\|_{2}^2&=\E_{\bbeta}[f^*(\bx)^2]=\sum_{k=0}^{\infty} \zeta_k(\tau)^2=\|\tau\|_{2}^2, \\
    \|P_{\leq  \ell}f^*\|_{2}^2 &=\sum_{k=0}^{\ell} \zeta^2_k(\tau), \quad\quad  \|P_{> \ell}f^*\|_{2}^2 =\sum_{k=\ell+1}^{\infty} \zeta^2_k(\tau), \\
  \E [P_{\leq \ell} f^*(\bx) P_{>\ell} f^*(\bx)]&=0.  \label{eq:orthognal_proj}
    \end{align}
From \eqref{eq:Kridge}, the predictor of the KRR is given by
\begin{align}
\hat{f}_{\lambda}^{(\KK)}(\x):=\bK(\x,\bX) (\bK(\bX,\bX)+\lambda\Id)^{-1}  \left( f^*(\bX) +\beps\right), \notag 
\end{align}
where 
\[
\bK(\x, \bX)=\sum_{k=0}^{\infty} \zeta_k^2(\sigma)  (\bx^\top \bX)^{\odot k} \in \mathbb R^{1\times n},
\]
and from Assumption \ref{assump:testdata},
\begin{align}\label{eq:normKlx}
\norm{\bK(\x, \bX)}\leq \sqrt{2n}\norm{\sigma}_4^2.
\end{align}
Define
\begin{align*}
    P_{\leq\ell}\hat{f}_{\lambda}^{(\KK)}(\x)&:=\bK(\x,\bX) (\bK(\bX,\bX)+\lambda\Id)^{-1}  \left(P_{\leq\ell} f^*(\bX) \right),\\
     P_{> \ell}\hat{f}_{\lambda}^{(\KK)}(\x)&:=\bK(\x,\bX) (\bK(\bX,\bX)+\lambda\Id)^{-1}  \left( P_{> \ell} f^*(\bX)+\beps\right) .
\end{align*}
From the orthogonal relation in \eqref{eq:orthogonal_relation} and \eqref{eq:Pl_decompose}, 
\begin{align}
\E_{\bbeta,\beps} [P_{\leq\ell}\hat{f}_{\lambda}^{(\KK)}(\x) P_{> \ell}\hat{f}_{\lambda}^{(\KK)}(\x)] &=\mathbf 0, \label{eq:orthogonal_Plfk}\\
    \E_{\bbeta}[ \left( P_{> \ell}f^*\right)(\bX)(P_{> \ell}f^*)(\bx)]&=\sum_{k\geq \ell+1 } \zeta_k^2(\tau) ((\bx^\top \bx_1)^k,\dots, (\bx^\top \bx_n)^k ).
\end{align}
Then by the linearity of expectation, we have 
\begin{align}
\E_{\bbeta,\beps}[\hat{f}_{\lambda}^{(\KK)}(\x)P_{> \ell }f^*(\x)]=\sum_{k>\ell} \zeta_k^2(\tau)\bK(\x,\bX) (\bK(\bX,\bX)+\lambda\Id)^{-1} ((\bx^\top \bx_1)^k,\dots, (\bx^\top \bx_n)^k )^\top, \notag 
\end{align}
which implies 
\begin{align}
   \left| \E_{\bbeta,\beps}[P_{> \ell }\hat{f}_{\lambda}^{(\KK)}(\x)P_{> \ell }f^*(\x)]\right|
    \leq~& \norm{\bK(\x,\bX)}\lambda_0^{-1}\sum_{k=\ell+1}^\infty\zeta_k^2(\tau) \left\| (\bX^\top \bx)^{\odot k}\right\|_2\\
    \leq ~&  \sqrt{2n}\norm{\sigma}_4^2\lambda_0^{-1} \|\tau\|_4^{2} \left(\sum_{k=\ell+1}^{\infty}\left\| (\bX^\top \bx)^{\odot k}\right\|_2^2\right)^{1/2}\\
    \leq ~& 2\sqrt{2n}\norm{\sigma}_4^2\lambda_0^{-1} \|\tau\|_4^{2} \left\| (\bX^\top \bx)^{\odot (\ell+1)}\right\|_2,\label{eq:>ell_term}
\end{align}
where the second inequality is due to    \eqref{eq:normKlx}, and the third inequality comes from  Cauchy's inequality. 
Recall the  generalization error of any predictor defined in \eqref{eq:def_testerror}. We have
\begin{align}
  \mathcal L(\hat{f}_{\lambda}^{(\KK)}) = ~&\E\left(f^*(\bx)-\hat{f}_{\lambda}^{(\KK)}(\x)\right)^2  \\
  = ~&\E\left(P_{\leq\ell }f^*(\x)+P_{>\ell }f^*(\x)-P_{\leq\ell}\hat{f}_{\lambda}^{(\KK)}(\x)-P_{> \ell}\hat{f}_{\lambda}^{(\KK)}(\x)\right)^2  \\
  =~& \E\left(P_{\leq\ell }f^*(\x)-P_{\leq\ell}\hat{f}_{\lambda}^{(\KK)}(\x)\right)^2 +\E\left(P_{>\ell }f^*(\x)-P_{>\ell}\hat{f}_{\lambda}^{(\KK)}(\x)\right)^2\\
  &+2\E\left[\left(P_{\leq\ell }f^*(\x)-P_{\leq\ell}\hat{f}_{\lambda}^{(\KK)}(\x)\right)\left(P_{>\ell }f^*(\x)-P_{>\ell}\hat{f}_{\lambda}^{(\KK)}(\x)\right)\right]\\
   \geq ~&   \E\left(P_{>\ell }f^*(\x)-P_{> \ell}\hat{f}_{\lambda}^{(\KK)}(\bx) \right)^2   \\
   = ~&\|P_{> \ell}f^*\|^2_{2}+ \E[P_{>\ell}\hat{f}_{\lambda}^{(\KK)}(\x)^2]  -2\E[P_{> \ell }f^*(\x)P_{> \ell }\hat{f}_{\lambda}^{(\KK)}(\bx)]\\
   \geq~ & \|P_{> \ell}f^*\|^2_{2}+ \E[P_{>\ell}\hat{f}_{\lambda}^{(\KK)}(\x)^2]  -4\sqrt{2n}\lambda_0^{-1}\norm{\sigma}_4^2 \|\tau\|_4^{2} \left\| (\bX^\top \bx)^{\odot (\ell+1)}\right\|_2,\label{eq:krr_lower_bound}
\end{align}
where the first inequality is due to the orthogonal relations \eqref{eq:orthognal_proj} and \eqref{eq:orthogonal_Plfk}, and 
 the second inequality is due to \eqref{eq:>ell_term}. Let $\bv=\bK_{\lambda}^{-1}\bK(\bX,\bx)$. The second term in \eqref{eq:krr_lower_bound} can be written as
\begin{align}
  \E[P_{> \ell}\hat{f}_{\lambda}^{(\KK)}(\x)^2]&=\E[(P_{> \ell}f^*(\bX)+\beps)^\top (\bK_{\lambda}^{-1} \bK(\bX,\bx)\bK(\x,\bX) \bK_{\lambda}^{-1})  \left( P_{> \ell}f^*(\bX) +\beps\right)]\\
    &=\E_{\bx}\sum_{ij}\bv_i\bv_j  \left(\E_{\bbeta} [P_{> \ell} f^*(\bx_i)P_{> \ell}f^*(\bx_j)]+\delta_{ij}\sigma_{\beps}^2\right)\\
    &=\sigma_{\beps}^2\E_{\bx}\norm{\bv}^2+ \E[P_{> \ell}f^*(\bX)^\top (\bv\bv^\top) P_{> \ell}f^*(\bX)], \label{eq:variance_term}\\
    &\geq \sigma_{\beps}^2\E_{\bx}\norm{\bv}^2=\sigma_{\beps}^2\Tr\bK_{\lambda}^{-1}\E_{\bx}[\bK(\bX,\bx)\bK(\bx,\bX)] \bK_{\lambda}^{-1}.
\end{align}

 On the other hand, from the generalization error approximation bounds in \eqref{eq:test_diff}, we obtain with   probability at least $1-\log^{-1}(N)$, when $N/\log^{2C_{\sigma}}(N)\geq C_1(1+\lambda^2)n$,
\begin{align}
\mathcal L(\hat{f}_{\lambda}^{(\RF)})&\ge\|P_{> \ell}f^*\|^2_{2}+\sigma_{\beps}^2\Tr\bK_{\lambda}^{-1}\E_{\bx}[\bK(\bX,\bx)\bK(\bx,\bX)] \bK_{\lambda}^{-1} \label{eq:lowerbound_with_variance}\\
 &\quad -C_2(1+\lambda)\log^{C_{\sigma}}(N)\sqrt{\frac{n}{N}}-C_3\sqrt{n}\left\| (\bX^\top \bx)^{\odot (\ell+1)}\right\|_2\\ 
 &\geq \|P_{> \ell}f^*\|^2_{2}-C_2(1+\lambda)\log^{C_{\sigma}}(N)\sqrt{\frac{n}{N}}-C_3\sqrt{n}\left\| (\bX^\top \bx)^{\odot (\ell+1)}\right\|_2.
\end{align}

Since we can approximate $K(\bx,\bX)$ with $K_{\ell}(\bx,\bX)$, we can apply the proof of Theorem~\ref{thm:pkrr} to obtain that
\begin{align}
   &\left| \Tr\bK_{\lambda}^{-1}\E_{\bx}[\bK(\bX,\bx)\bK(\bx,\bX)] \bK_{\lambda}^{-1}-\E_\bx\left[\bK_{m,\ell}^\top\bK_{\lambda,\ell}^{-2}\bK_{m,\ell}\right]\right|\\
   \le & \left|\E_\bx\left[(\bK_{m,\ell}-\Km)^\top\bK_{\lambda,\ell}^{-2}\bK_{m,\ell}\right]\right|+ \left|\E_\bx\left[\Km^\top\left(\bK_{\lambda,\ell}^{-1}-\Kl^{-1}\right)\bK_{\lambda,\ell}^{-1}\bK_{m,\ell}\right]\right|\\
   &+\left|\E_\bx\left[\Km^\top\bK_{m}^{-2}\left(\bK_{m,\ell}-\Km\right)\right]\right|+ \left|\E_\bx\left[\Km^\top\Kl^{-1}\left(\bK_{\lambda,\ell}^{-1}-\Kl^{-1}\right)\bK_{m,\ell}\right]\right|\\
   \le &  C(1+\lambda)\left\| \left(\tilde \bX^\top \tilde \bX\right)^{\odot \ell+1}-\Id\right\|_F
\end{align}for some constant $C>0$ depending on $\sigma,\tau,\sigma_{\eps}$, when in the last inequality, we exploit Proposition~\ref{prop:pkrr} and \eqref{eq:E_1_1}. Thus, we conclude that under the same assumptions of Theorem~\ref{thm:test_diff}, with probability at least $1-\log^{-1} N$,
\begin{align}
    \mathcal L(\hat{f}_{\lambda}^{(\RF)})\ge &\|P_{> \ell}f^*\|^2_{2}+\sigma_{\beps}^2\E_\bx\left[\bK_{m,\ell}^\top\bK_{\lambda,\ell}^{-2}\bK_{m,\ell}\right]\\
     &- C(1+\lambda)\left\| \left(\tilde \bX^\top \tilde \bX\right)^{\odot \ell+1}-\Id\right\|_F-C_2(1+\lambda)\log^{C_{\sigma}}(N)\sqrt{\frac{n}{N}}-C_3\sqrt{n}\left\| (\bX^\top \bx)^{\odot (\ell+1)}\right\|_2.
\end{align}\\
This completes the proof of the lower bound of \eqref{eq:lower1}.
\end{proof}

\end{document}